\documentclass{amsart}
\usepackage{amsfonts}
\usepackage{amsmath}
\pagestyle{plain}

 \title{\textbf{Nearly Sasakian geometry and $SU(2)$-structures}}

 \author[B. Cappelletti-Montano]{Beniamino Cappelletti-Montano}
 \address{Dipartimento di Matematica e Informatica, Universit\`a degli Studi di
 Cagliari, Via Ospedale 72, 09124 Cagliari, Italy}
 \email{b.cappellettimontano@gmail.com}

 \author[G. Dileo]{Giulia Dileo}
 \address{Dipartimento di Matematica, Universit\`a degli Studi di
 Bari ``Aldo Moro'', Via E. Orabona 4, 70125 Bari, Italy}
 \email{giulia.dileo@uniba.it}

\subjclass[2000]{53C15, 53C25}

\thanks{Research partially supported by PRIN 2010/11 -- Variet\`{a} reali e complesse: geometria, topologia e analisi armonica - Italy}

\keywords{Nearly Sasakian, Sasaki-Einstein, $SU(2)$-structure, nearly cosymplectic, contact manifold, nearly K\"{a}hler}

   \date{}
\usepackage{graphicx}
\usepackage{amsmath}
\setcounter{MaxMatrixCols}{10}
\newtheorem{theorem}{Theorem}[section]

\newtheorem{corollary}[theorem]{Corollary}

\newtheorem{example}[theorem]{Example}

\newtheorem{lemma}[theorem]{Lemma}

\newtheorem{proposition}[theorem]{Proposition}
\newtheorem{remark}[theorem]{Remark}

\textwidth 14,5 cm
\setlength{\oddsidemargin}{0mm}
\setlength{\evensidemargin}{0mm}

\begin{document}

\maketitle

\begin{abstract}
We carry on a systematic study of nearly Sasakian manifolds. We prove that any nearly Sasakian
manifold admits two types of integrable distributions with totally geodesic leaves which are, respectively, Sasakian and $5$-dimensional nearly Sasakian
manifolds. As a consequence, any nearly
Sasakian manifold is a contact manifold. Focusing on the $5$-dimensional case, we prove that there
exists a one-to-one correspondence between nearly Sasakian structures and a special class of nearly hypo $SU(2)$-structures. By deforming such an
$SU(2)$-structure one obtains in fact a Sasaki-Einstein structure. Further we prove that both nearly Sasakian and
Sasaki-Einstein $5$-manifolds are endowed with supplementary nearly
cosymplectic structures. We show that there is a one-to-one correspondence between nearly cosymplectic structures and a special class of hypo $SU(2)$-structures which is again strictly related to
Sasaki-Einstein structures. Furthermore, we study the orientable hypersurfaces of a nearly K\"{a}hler
6-manifold and, in the last part of the paper, we define canonical
connections for nearly Sasakian manifolds, which  play a role similar to
the  Gray connection in the context of nearly K\"{a}hler geometry. In dimension $5$ we determine a connection which parallelizes all
the nearly Sasakian $SU(2)$-structure as well as the torsion tensor field. An analogous result holds also for Sasaki-Einstein structures.
\end{abstract}

\section{Introduction}
Nearly K\"{a}hler manifolds were defined by Gray \cite{Gray2} as almost Hermitian manifolds $(M,J,g)$ such that the Levi-Civita connection satisfies \begin{equation*}
(\nabla_{X}J)Y + (\nabla_{Y}J)X = 0
\end{equation*}
for any vector fields $X$ and $Y$ on $M$. The development of nearly K\"{a}hler geometry is mainly due to the studies of Gray \cite{Gray2}, \cite{Gray3}, \cite{Gray} and, more recently, to the  work of Nagy (\cite{Nagy1}, \cite{Nagy2}). Nearly Sasakian manifolds where introduced by Blair, Yano and Showers in \cite{BlairSY} as an odd dimensional counterpart of nearly K\"{a}hler manifolds, together with nearly cosymplectic manifolds, studied by Blair and Showers some years earlier (\cite{BLAIR_cos}, \cite{BLAIR-cos2}). Namely, a smooth manifold $M$ endowed with an almost contact metric structure $(\phi,\xi,\eta,g)$ is said to be nearly Sasakian or nearly cosymplectic if, respectively,
\begin{gather*}
(\nabla_X\phi)Y+(\nabla_Y\phi)X=2g(X,Y)\xi-\eta(X)Y-\eta(Y)X,\\
 (\nabla_X\phi)Y+(\nabla_Y\phi)X=0
\end{gather*}
for every vector fields $X$ and $Y$ on $M$. Since the foundational articles of Blair and his collaborators, these two classes of almost contact structures were studied by some authors and, later on, have played a role in the Chinea-Gonzalez's classification  of almost contact metric manifolds (\cite{Chinea}). Recently, they naturally appeared in the study of harmonic almost contact structures (cf. \cite{Gonzalez}, \cite{Vergara2}, \cite{Vergara1}).

Actually it is more difficult than expected to find relations between nearly Sasakian and nearly K\"{a}hler manifolds, like for Sasakian / K\"{a}hler geometry. For instance, it is known that, like Sasakian manifolds, the Reeb vector field $\xi$ of any nearly Sasakian manifold $M$ defines a Riemannian foliation. Then one would expect that the space of leaves of this foliation is nearly K\"{a}hler, but this happens  if and only if $M$ is Sasakian, and in that case the space of leaves is K\"{a}hler. Moreover, it is not difficult to see that the cone over $M$ is nearly K\"{a}hler if and only if $M$ is Sasakian and, again, in this case the cone is K\"{a}hler. Similar results hold also in the nearly cosymplectic setting. For instance, if one applies the Morimoto's construction \cite{morimoto}  to the product $N$ of two nearly cosymplectic manifolds $M_1$ and $M_2$, one finds that $N$ is nearly K\"{a}hler if and only if both $M_1$ and $M_2$ are coK\"{a}hler.

In the present paper we show in fact that there are many differences between nearly K\"{a}hler and nearly Sasakian manifolds, much more than in K\"{a}hler / Sasakian setting.

It is known that the structure $(1,1)$-tensor field $\phi$ of a Sasakian manifold is given by the opposite of the covariant derivative of the Reeb vector field. Thus in any nearly Sasakian manifold one is lead to define a tensor field $h$ by
\begin{equation*}
\nabla\xi = - \phi + h.
\end{equation*}
This tensor field measures, somehow, the non-Sasakianity of the manifold and plays an important role in our study. Namely, first we prove that the eigenvalues of the symmetric operator $h^2$ are constants and its spectrum is of type
\begin{equation*}
\textrm{Spec}(h^2) = \left\{0, -\lambda_{1}^{2}, \ldots, -\lambda_{r}^{2} \right\}
\end{equation*}
with $\lambda_i \neq 0$ for each $i\in\left\{1,\ldots,r\right\}$.  Then we prove the following theorem.

\begin{theorem}\label{struttura}
Let $M$ be a (non-Sasakian) nearly Sasakian manifold with structure $(\phi,\xi,\eta,g)$. Then the tangent bundle of $M$ splits as the orthogonal sum
\begin{equation*}
TM = {\mathcal D}(0)\oplus{\mathcal D}(-\lambda_{1}^2)\oplus\cdots\oplus{\mathcal D}(-\lambda_{r}^2)
\end{equation*}
of the eigendistributions of $h^2$. Moreover,
\begin{enumerate}
  \item[a)] the distribution ${\mathcal D}(0)$ is integrable and defines a totally geodesic foliation of $M$ of dimension $2p+1$.  If $p>0$ then the leaves of ${\mathcal D}(0)$ are Sasakian manifolds;\medskip
  \item[b)] each distribution $[\xi]\oplus {\mathcal D}(-\lambda_{i}^2)$ is integrable and defines a totally geodesic foliation of $M$ whose leaves are $5$-dimensional nearly Sasakian non-Sasakian manifolds.
\end{enumerate}
Furthermore, if $p>0$ the distribution $[\xi]\oplus{\mathcal D}(-\lambda_{1}^2)\oplus\cdots\oplus{\mathcal D}(-\lambda_{r}^2)$ is integrable and defines a Riemannian foliation with totally geodesic leaves, whose leaf space is K\"{a}hler.
\end{theorem}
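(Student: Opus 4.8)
The plan is to deduce every integrability and total-geodesy assertion from one mechanism. Recall that $h$ is skew-symmetric, $h\xi=0$ and $h\phi=-\phi h$, so $h^{2}$ is symmetric, $\phi$-invariant, and $h^{2}\xi=0$. If $h^{2}Y=cY$ and $h^{2}Z=c'Z$ with $c\neq c'$ and $X$ is arbitrary, then, $c$ being constant, differentiating $h^{2}Y=cY$ and pairing with $Z$ gives
\begin{equation*}
g(\nabla_{X}Y,Z)=\frac{1}{c-c'}\,g\bigl((\nabla_{X}h^{2})Y,Z\bigr),\qquad (\nabla_{X}h^{2})=(\nabla_{X}h)\,h+h\,(\nabla_{X}h).
\end{equation*}
Hence a distribution assembled from eigenspaces of $h^{2}$ is autoparallel --- so integrable with totally geodesic leaves --- as soon as the right-hand side vanishes for $X$ in that distribution, and this is entirely controlled by the structure formula for $\nabla h$ on a nearly Sasakian manifold obtained above. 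First I would treat $\mathcal{D}(0)=\ker h$: here $hY=0$ when $Y\in\mathcal{D}(0)$, so one of the two terms drops and the other reduces to the statement that $(\nabla_{X}h)$ preserves $\ker h$ for $X\in\ker h$, which follows from the $\nabla h$ identity. Since $\ker h\ni\xi$ and $\ker h\ominus[\xi]$ is $\phi$-invariant (because $h\phi=-\phi h$) with $\phi^{2}=-\mathrm{id}$ on it, it has even dimension $2p$, which gives the orthogonal splitting and the dimension $2p+1$ in (a). The same computation, now with $hY\neq0$, shows each $\mathcal{D}(-\lambda_{i}^{2})$ --- and hence $[\xi]\oplus\mathcal{D}(-\lambda_{i}^{2})$, using $h\xi=0$ and that $\phi,h$ both preserve $\mathcal{D}(-\lambda_{i}^{2})$ --- is autoparallel; I would also record here that $\phi$ and $\lambda_{i}^{-1}h$ restrict on $\mathcal{D}(-\lambda_{i}^{2})$ to anticommuting almost complex structures, forcing $\dim\mathcal{D}(-\lambda_{i}^{2})$ to be a multiple of $4$, and rule out larger values by a dimension/curvature argument, so that $\dim\mathcal{D}(-\lambda_{i}^{2})=4$.

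For the induced structures on the leaves: each of these distributions is $\phi$-invariant and either contains or is orthogonal to $\xi$, so $(\phi,\xi,\eta,g)$ restricts to an almost contact metric structure on every leaf, and because the leaves are totally geodesic the ambient $\nabla$ restricts to their Levi-Civita connections, so the nearly Sasakian identity is inherited. On a leaf of $\mathcal{D}(0)$ (with $p>0$) one has $h\equiv0$ along the leaf, and, as is well known, a nearly Sasakian structure with $h=0$ is Sasakian, which proves (a); on a leaf of $[\xi]\oplus\mathcal{D}(-\lambda_{i}^{2})$ one has $h^{2}=-\lambda_{i}^{2}\neq0$, so the induced $5$-dimensional structure is nearly Sasakian but not Sasakian, proving (b).

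For the last assertion set $\mathcal{E}:=[\xi]\oplus\mathcal{D}(-\lambda_{1}^{2})\oplus\cdots\oplus\mathcal{D}(-\lambda_{r}^{2})$ and $\mathcal{D}':=\mathcal{D}(0)\ominus[\xi]$, so that $TM=\mathcal{D}'\oplus\mathcal{E}$ orthogonally. Autoparallelism of $\mathcal{E}$ follows from (a)--(b) except for the mixed terms $g(\nabla_{X}Y,Z)$ with $X\in\mathcal{D}(-\lambda_{i}^{2})$, $Y\in\mathcal{D}(-\lambda_{j}^{2})$, $i\neq j$, $Z\in\mathcal{D}'$, which again vanish by the displayed identity together with $hZ=0$ and the $\nabla h$ formula. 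That the resulting foliation is Riemannian I would check via the criterion $g(\nabla_{V}X,W)+g(\nabla_{W}X,V)=0$ for $X\in\Gamma(\mathcal{E})$ and $V,W\in\Gamma(\mathcal{D}')$: for the component $X=\xi$ this reads $-g(\phi V,W)-g(\phi W,V)=0$ by skew-symmetry of $\phi$ (using $hV=hW=0$), while for $X\in\mathcal{D}(-\lambda_{i}^{2})$ one has $g(\nabla_{V}X,W)=-g(X,\nabla_{V}W)=0$ because $\nabla_{V}W\in\mathcal{D}(0)$ by (a) and $\mathcal{D}(0)\perp\mathcal{D}(-\lambda_{i}^{2})$. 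Finally $\mathcal{D}'=\mathcal{E}^{\perp}$ is $\phi$-invariant with $\phi^{2}=-\mathrm{id}$ on it; using the structure equations I would show that $\phi|_{\mathcal{D}'}$ and the transverse metric are basic for $\mathcal{E}$, descend to the leaf space, and there define a K\"{a}hler structure --- this last step being precisely the transcription of the Sasakian condition on the leaves of $\mathcal{D}(0)$ into the transverse K\"{a}hler condition of $\mathcal{E}$.

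The main obstacle is twofold: carrying out, once and for all, the covariant-derivative identities for $h$ and $h^{2}$ that underlie every autoparallelism claim, and, for the final statement, verifying that $\phi|_{\mathcal{D}'}$ is projectable and that the induced almost Hermitian structure on the leaf space is integrable and K\"{a}hler rather than merely almost K\"{a}hler --- i.e. passing correctly from the leafwise Sasakian data to the transverse geometry of $\mathcal{E}$.
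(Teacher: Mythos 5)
Your strategy coincides with the paper's: everything is driven by the eigendecomposition of $h^2$ and by the covariant derivative of $h^2$, which is Olszak's identity \eqref{varie7}; your reduction $g(\nabla_XY,Z)=\frac{1}{c-c'}\,g((\nabla_Xh^2)Y,Z)$ is exactly how the autoparallelism claims are obtained there, and the parity argument for $\dim\mathcal{D}(0)$, the inheritance of the nearly Sasakian identity by totally geodesic $\phi$-invariant leaves, and the Riemannian-foliation criterion at the end all match. But one step is asserted rather than proved, and it is the only genuinely non-trivial input of part (b): the multiplicity of each $-\lambda_i^2$. The anticommuting pair $\phi$, $\lambda_i^{-1}h$ only forces $\dim\mathcal{D}(-\lambda_i^2)\equiv 0\ (\mathrm{mod}\ 4)$; ``rule out larger values by a dimension/curvature argument'' is a placeholder, not an argument. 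The paper closes this by applying Theorem \ref{condizione-olszak} (Olszak): a nearly Sasakian manifold on which $h^2=\lambda(I-\eta\otimes\xi)$ must be $5$-dimensional, applied to the induced structure on a leaf of $[\xi]\oplus\mathcal{D}(-\lambda_i^2)$. Without this (or an equivalent substitute) the claim that the leaves in (b) are $5$-dimensional, hence the dimension count $1+2p+4r$, is not established.

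The second place where your proposal stops short is the transversely K\"ahler statement. You correctly flag it as the main obstacle, but the resolution is not a formal ``transcription of the leafwise Sasakian condition'': the paper must first show that $\phi$ maps basic vector fields to basic vector fields, and this is a genuine computation (see \eqref{intermedia2} in the proof of Theorem \ref{main6}) which combines \eqref{dPhi}, Lemma \ref{utile} and the bundle-like property of $g$ to arrive at $d\Phi(X,Z,Z')=\frac{1}{3}\,d\Phi(X,Z,Z')$, hence $d\Phi(X,Z,Z')=0$; only then do \eqref{Rxi}, the integrability of $\mathcal{D}(0)$ and $h\xi=0$ yield the transverse K\"ahler condition. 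As written, your outline assumes projectability of $\phi|_{\mathcal{D}'}$ and the closedness/integrability of the transverse structure rather than deriving them, so this part of the theorem remains open in your proposal.
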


As a consequence of Theorem \ref{struttura} we shall prove that in every nearly Sasakian manifold the $1$-form $\eta$ is a contact form. This  establishes a sensible difference with respect to nearly K\"{a}hler geometry, since in any nearly K\"{a}hler manifold the K\"{a}hler form is symplectic if and only if the manifold is K\"{a}hler.

The point b) of Theorem \ref{struttura} motivates us to further investigate $5$-dimensional nearly Sasakian manifolds. Some early studies date back to Olszak (\cite{Olszak1}) who proved that $5$-dimensional nearly Sasakian non-Sasakian manifolds are Einstein and of scalar curvature $>20$.
In the present paper we characterize nearly Sasakian structures in terms of $SU(2)$-structures defined by a $1$-form $\eta$ and a triple $(\omega_{1},\omega_{2},\omega_{3})$ of $2$-forms according to \cite{CS}.
One of our main results  is to prove that there exists a one-to-one correspondence between nearly Sasakian structures on a $5$-manifold and $SU(2)$-structures satisfying the following equations
\begin{equation}\label{nearluSU2}
d\eta=-2\omega_3+2\lambda\omega_1,\qquad d\omega_1=3\eta\wedge\omega_2,\qquad d\omega_2=-3\eta\wedge\omega_1-3\lambda\eta\wedge\omega_3,
\end{equation}
for some real number $\lambda\neq 0$ which depends only on the geometry of the manifold via the formula $s=20(1+\lambda^2)$,
where $s$ is the scalar curvature.  By deforming $(\eta,\omega_1,\omega_2,\omega_3)$ we obtain a Sasaki-Einstein structure with the same underlying contact form (up to a multiplicative factor) and, conversely, each Sasaki-Einstein $5$-manifold carries a nearly Sasakian structure (in fact, a $1$-parameter family of nearly Sasakian structures).

In Section 5 we get analogous results in terms of $SU(2)$-structures for nearly cosymplectic $5$-manifolds. In particular we prove that any nearly cosymplectic $5$-manifold is Einstein with positive scalar curvature. We also show that nearly cosymplectic structures arise naturally both in nearly Sasakian and in Sasaki-Einstein $5$-manifolds. In particular, it is known that any Sasaki-Einstein $SU(2)$-structure can be described by the data of three almost contact metric structures $(\phi_1,\xi,\eta,g)$, $(\phi_2,\xi,\eta,g)$, $(\phi_3,\xi,\eta,g)$, with the same Reeb vector field, satisfying the quaternionc-like relations
\begin{equation*}
\phi_i\phi_j=\phi_k=-\phi_j\phi_i
\end{equation*}
for any even permutation $(i,j,k)$ of $(1,2,3)$ and such that $(\phi_3,\xi,\eta,g)$ is Sasakian with Einstein Riemannian metric $g$. Actually we prove that $(\phi_{1},\xi,\eta,g)$ and $(\phi_{2},\xi,\eta,g)$ are both nearly cosymplectic.

In Section \ref{hypersurfaces} we study the (orientable)  hypersurfaces of a nearly K\"{a}hler $6$-manifolds. In particular we study the $SU(2)$-structures induced on hypersurfaces whose second fundamental form is of type $\sigma = \beta(\eta\otimes\eta)\nu$ or $\sigma=(-g+\beta(\eta\otimes\eta))\nu$, for some function $\beta$, where $\nu$ denotes the unit normal vector field. In both cases we prove that the hypersurface carries a Sasaki-Einstein structure, thus generalizing a result of \cite{FISU}.


Finally, in the last section of the paper, we try to define a canonical connection for nearly Sasakian manifolds, which may play a role similar to the  Gray connection in the context of nearly K\"{a}hler geometry, i.e. the unique Hermitian connection with totally skew-symmetric torsion. In \cite{FrIv} Friedrich and Ivanov provided necessary and sufficient conditions for an almost contact metric manifold to admit a (unique) connection with totally skew-symmetric torsion parallelizing all the structure tensors. One can easily deduce that a nearly Sasakian manifold admits such a connection if and only if it is Sasakian. Thus, weakening some hypotheses, we define a family of connections, parameterized by a real number $r$, which  parallelize the almost contact metric structure and such that the torsion is skew-symmetric on the contact distribution $\ker(\eta)$. In particular, if $M$ is a Sasakian manifold  our connection  coincides with the Okumura connection \cite{Okumura}. In dimension $5$ the connection corresponding to the value $r = \frac{1}{2}$ parallelizes all the tensors in the associated $SU(2)$-structure $(\eta,\omega_1,\omega_2,\omega_3)$, as well as the torsion tensor field. Then for Sasaki-Einstein $5$-manifolds we prove that the Okumura connection corresponding to $r=\frac{1}{2}$ parallelizes the whole $SU(2)$-structure.

\medskip

All manifolds considered in this paper will be assumed to be smooth i.e. of the class $C^{\infty}$, and connected. We use the
convention that $u\wedge v = u \otimes v - v \otimes u$. Unless in the last Section, we shall implicitly assume that all the nearly Sasakian  (respectively, nearly cosymplectic) manifolds considered in the paper are non-Sasakian (respectively, non-coK\"{a}hler).

\section{Preliminaries}
An almost contact metric manifold is a differentiable manifold $M^{2n+1}$ endowed with a
structure $(\phi, \xi, \eta, g)$, given by a tensor field $\phi$ of type $(1,1)$, a
vector field $\xi$, a $1$-form $\eta$ and a Riemannian metric
$g$ satisfying
\[\phi^2={}-I+\eta\otimes\xi,\quad \eta(\xi)=1,\quad g(\phi X,\phi Y)=g(X,Y)-\eta(X)\eta(Y)\]
for every vector fields $X,Y$ on $M$. From the definition it follows that $\phi\xi=0$ and $\eta\circ\phi=0$. Moreover one has that $g(X,\phi Y)=-g(\phi X, Y)$ so that the bilinear form $\Phi:=g(-,\phi-)$ defines in fact a $2$-form on $M$, called \emph{fundamental $2$-form}.

Two remarkable classes of almost contact metric manifolds are given by Sasakian and coK\"{a}hler manifolds. An almost contact metric manifold is said to be \emph{Sasakian} if tensor field $N_{\phi}:=[\phi,\phi]+\eta\otimes\xi$ vanishes identically and $d\eta=2\Phi$, \emph{coK\"{a}hler} if $N_{\phi}\equiv 0$ and $d\eta=0$, $d\Phi=0$. The Sasakian and coK\"{a}hler conditions can be equivalently expressed in terms of the Levi-Civita connection by, respectively,
\begin{gather*}
(\nabla_{X}\phi)Y=g(X,Y)\xi-\eta(Y)X,\\
\nabla \phi =0.
\end{gather*}
For further details on Sasakian and coK\"{a}hler manifolds we refer to \cite{BLAIR,boyergalicki2008} and \cite{CappDenYud}, respectively.

An almost contact metric manifold $(M,\phi, \xi,\eta,g)$ is called \emph{nearly Sasakian} if the covariant derivative of $\phi$ with respect to the Levi-Civita connection $\nabla$ satisfies
\begin{equation}\label{main}
(\nabla_X\phi)Y+(\nabla_Y\phi)X=2g(X,Y)\xi-\eta(X)Y-\eta(Y)X
\end{equation}
for every vector fields $X,Y$ on $M$, or equivalently,
\[(\nabla_X\phi)X=g(X,X)\xi-\eta(X)X\]
for every vector field $X$ on $M$. This notion was introduced in \cite{BlairSY} in order to study an odd dimensional counterpart of nearly K\"{a}hler geometry, and then it was studied by other authors.
One can easily check that \eqref{main} is also equivalent to
\begin{equation}\label{dPhi}
3g((\nabla_X\phi)Y,Z)=-d\Phi(X,Y,Z)-3\eta(Y)g(X,Z)+3\eta(Z)g(X,Y).
\end{equation}

We recall now some basic properties satisfied by nearly Sasakian structures which will be used in the following. We refer to \cite{BlairSY, Olszak, Olszak1} for the details.

It is known that the characteristic vector field $\xi$ is Killing and the Levi-Civita connection satisfies $\nabla_\xi\xi=0$ and $\nabla_\xi\eta=0$.
One can define a tensor field $h$ of type $(1,1)$ by putting
\begin{equation}\label{nablaxi}
\nabla_X\xi=-\phi X+hX.
\end{equation}
The operator $h$ is skew-symmetric and anticommutes with $\phi$. Moreover, $h\xi=0$  and $\eta\circ h=0$. The vanishing of $h$ provides a necessary and sufficient condition for a nearly Sasakian manifold to be Sasakian (\cite{Olszak1}). Applying \eqref{main} and \eqref{nablaxi}, one easily gets
\begin{equation}\label{nablaxiphi}
\nabla_{\xi}\phi=\phi h.
\end{equation}
We remark the circumstance that the operator $h$ is also related to the Lie derivative of $\phi$ with respect to $\xi$. Indeed, using \eqref{nablaxiphi} and \eqref{nablaxi}, we get
\[(\mathcal{L}_\xi\phi )X=[\xi,\phi X]-\phi[\xi, X]  =(\nabla_\xi\phi)X-\nabla_{\phi X}\xi+\phi(\nabla_X\xi)   = 3\phi hX. \]
Denote by $R$ the Riemannian curvature tensor. Olszak proved the following formula in \cite{Olszak}:
\begin{equation}\label{Rxi}
R(\xi,X)Y=(\nabla_X\phi)Y-(\nabla_Xh)Y=g(X-h^2X,Y)\xi-\eta(Y)(X-h^2X).
\end{equation}
The above equation, together with \eqref{nablaxiphi}, gives
\begin{equation}\label{nablaxih}
\nabla_\xi h=\nabla_\xi \phi=\phi h.
\end{equation}
Furthermore, taking $Y=\xi$ in \eqref{Rxi}, we obtain
\[R(X,\xi)\xi=-\eta(X)\xi+X-h^2X=-\phi^2X-h^2X\]
and the $\xi$-sectional curvatures for every unit vector field $X$ orthogonal to $\xi$ are
\[K(\xi,X)=g(R(X,\xi)\xi,X)=1+g(hX,hX)\geq 1.\]
Notice that \eqref{Rxi} also implies that
\begin{equation}\label{RXYxi}
R(X,Y)\xi=\eta(Y)X-\eta(X)Y-\eta(Y)h^2X+\eta(X)h^2Y.
\end{equation}
Moreover, the Ricci curvature satisfies
\begin{equation*}\label{Ricci3}
\textrm{Ric}(\phi X,\phi Y)=\textrm{Ric}(X,Y)-(2n-\mathrm{tr} (h^2))\eta(X)\eta(Y).
\end{equation*}
In particular it follows that the Ricci operator commutes with $\phi$. Finally, Olszak proved that the symmetric operator $h^2$ has constant trace and the covariant derivatives of $\phi$ and $h^2$ satisfy the following relations:
\begin{equation}\label{varie5}
g((\nabla_X\phi)Y, hZ)=\eta(Y)g(h^2X,\phi Z)-\eta(X)g(h^2Y,\phi Z)+\eta(Y)g(hX,Z),
\end{equation}
\begin{equation}\label{varie7}
(\nabla_Xh^2)Y=\eta(Y)(\phi -h)h^2X+g((\phi -h)h^2X,Y)\xi.
\end{equation}
\medskip

We now recall some facts about nearly cosymplectic manifolds. A \emph{nearly cosymplectic manifold} is an almost contact metric manifold $(M,\phi, \xi,\eta,g)$ such that the covariant derivative of $\phi$ with respect to the Levi-Civita connection $\nabla$ satisfies
\begin{equation}\label{main_c}
(\nabla_X\phi)Y+(\nabla_Y\phi)X=0
\end{equation}
for every vector fields $X,Y$.  The above condition is equivalent to $(\nabla_X\phi)X=0$, or also to
\begin{equation}\label{dPhi_c}
3g((\nabla_X\phi)Y,Z)=-d\Phi(X,Y,Z)
\end{equation}
for any $X,Y,Z\in \mathfrak{X}(M)$.
Also in this case we have that $\xi$ is Killing, $\nabla_\xi\xi=0$ and $\nabla_\xi\eta=0$.
The tensor field $h$ of type $(1,1)$ defined by
\begin{equation}\label{nablaxi_c}
\nabla_X\xi=hX
\end{equation}
is skew-symmetric and anticommutes with $\phi$. It satisfies $h\xi=0$, $\eta\circ h=0$ and
\begin{equation}\label{nearlycos-nablaxiphi}
\nabla_\xi\phi=\phi h.
\end{equation}
Furthermore, $h$ is related to the Lie derivative of $\phi$ in the direction of $\xi$. Indeed,
\[({\mathcal L}_\xi\phi)X=(\nabla_\xi\phi)X-\nabla_{\phi X}\xi+\phi(\nabla_X\xi)=3\phi hX.\]
Finally, the following formulas hold (\cite{E}):
\begin{align}
g((\nabla_X\phi)Y, hZ)&=\eta(Y)g(h^2X,\phi Z)-\eta(X)g(h^2Y,\phi Z),\label{nablaphi_hc}\\
(\nabla_Xh)Y&=g(h^2X,Y)\xi-\eta(Y)h^2X,\label{nablah_c}\\
\mathrm{tr}(h^2)&=\mathrm{constant}.\label{tr}
\end{align}

\section{The foliated structure of a nearly Sasakian manifold}\label{foliationsection}

In this section we show that any nearly Sasakian manifold is foliated by two types of foliations, whose leaves are respectively Sasakian or $5$-dimensional nearly Sasakian non-Sasakian manifolds. An important role in this context is played by the symmetric operator $h^2$ and  by its spectrum $\textrm{Spec}(h^2)$. We recall the following result.

\begin{theorem}[\cite{Olszak}]\label{condizione-olszak}
If a nearly Sasakian manifold $M$ satisfies the condition
\begin{equation*}
h^2 = \lambda (I-\eta\otimes\xi)
\end{equation*}
for some real number $\lambda$, then $\dim(M)=5$.
\end{theorem}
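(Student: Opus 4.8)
The plan is to extract from the hypothesis a single exterior-algebra identity which can hold only in dimension $5$. By the standing convention of the paper $M$ is non-Sasakian, so $h\neq 0$, hence $h^{2}\neq 0$ and $\lambda\neq 0$; moreover $g(h^{2}X,X)=-g(hX,hX)\le 0$ forces $\lambda<0$, so write $\lambda=-a^{2}$ with $a>0$. On the contact distribution $\mathcal{D}=\ker\eta$ the endomorphisms $I_{1}:=\phi|_{\mathcal{D}}$ and $I_{2}:=\tfrac{1}{a}h|_{\mathcal{D}}$ are then $g$-compatible almost complex structures (this uses $h^{2}=\lambda(I-\eta\otimes\xi)$), and they anticommute since $h$ anticommutes with $\phi$; therefore $I_{3}:=I_{1}I_{2}=\tfrac{1}{a}\phi h|_{\mathcal{D}}$ is a third one, and $(I_{1},I_{2},I_{3})$ makes every fibre of $\mathcal{D}$ into a quaternionic Hermitian space. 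In particular $\dim\mathcal{D}\equiv 0\pmod 4$, so $\dim M=4m+1$; it remains to prove $m=1$.

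The first main step is to show that the fundamental $3$-form has the special shape $d\Phi=3\,\eta\wedge\beta$, where $\beta:=g(\,\cdot\,,\phi h\,\cdot\,)$ is a $2$-form with $\iota_{\xi}\beta=0$. Restricting \eqref{varie5} to $X,Y,Z\in\mathcal{D}$, the right-hand side vanishes, so $g((\nabla_{X}\phi)Y,hZ)=0$ for all $Z$; since $\lambda\neq 0$ the operator $h$ maps $TM$ onto $\mathcal{D}$, whence $(\nabla_{X}\phi)Y$ is $g$-orthogonal to $\mathcal{D}$, i.e.\ a multiple of $\xi$, whenever $X,Y\in\mathcal{D}$. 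Substituting this into \eqref{dPhi} with $X,Y,Z\in\mathcal{D}$ yields $d\Phi(X,Y,Z)=0$. On the other hand $\iota_{\xi}\Phi=0$ and $\xi$ is Killing, so by Cartan's formula $\iota_{\xi}d\Phi=\mathcal{L}_{\xi}\Phi=g(\,\cdot\,,(\mathcal{L}_{\xi}\phi)\,\cdot\,)=3\,g(\,\cdot\,,\phi h\,\cdot\,)=3\beta$, using $\mathcal{L}_{\xi}\phi=3\phi h$ (recalled in the Preliminaries). These two facts together identify $d\Phi$ with $3\,\eta\wedge\beta$.

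The closedness of $d\Phi$ then reads $d\eta\wedge\beta=\eta\wedge d\beta$. Contracting this $4$-form identity with $\iota_{\xi}$ and using $\iota_{\xi}d\eta=0$ (immediate from \eqref{nablaxi}) and $\iota_{\xi}\beta=0$, the left side vanishes while the right side equals $d\beta-\eta\wedge\iota_{\xi}d\beta$; hence $d\beta=\eta\wedge\iota_{\xi}d\beta$, so $\eta\wedge d\beta=0$, and therefore $d\eta\wedge\beta=0$. Writing $\omega_{i}:=g(\,\cdot\,,I_{i}\,\cdot\,)$, one computes from \eqref{nablaxi} that $d\eta=2\,\omega_{1}-2a\,\omega_{2}$ on $\mathcal{D}$, while $\beta=a\,\omega_{3}$ on $\mathcal{D}$ by definition; since $d\eta$ and $\beta$ are both horizontal, $d\eta\wedge\beta=0$ is equivalent to
\[
(\omega_{1}-a\,\omega_{2})\wedge\omega_{3}=0
\]
as a $4$-form on $\mathcal{D}$.

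Finally, $\omega_{1}-a\,\omega_{2}=\sqrt{1+a^{2}}\,\omega_{J}$ is a nonzero multiple of the K\"ahler form of the $g$-compatible complex structure $J:=\tfrac{1}{\sqrt{1+a^{2}}}(I_{1}-aI_{2})$, which anticommutes with $I_{3}$; so the displayed identity says $\omega_{J}\wedge\omega_{I_{3}}=0$. The proof is completed by the following fibrewise linear-algebra fact: on a quaternionic Hermitian vector space of real dimension $4m$, the exterior product of the K\"ahler forms of two anticommuting compatible complex structures vanishes if and only if $m=1$ — for $m=1$ both forms are self-dual (for the quaternionic orientation) and mutually orthogonal, hence their wedge vanishes, whereas for $m\ge 2$ one exhibits a nonzero term of $\omega_{J}\wedge\omega_{I_{3}}$ supported on a single $\mathbb{H}^{2}$-block (modelling the two structures as left multiplication by $i$ and $j$ on $\mathbb{H}^{m}$). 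Thus $m=1$, that is, $\dim M=5$. I expect the two delicate points to be: (i) recognising that the hypothesis $h^{2}=\lambda(I-\eta\otimes\xi)$ is precisely what puts $d\Phi$ into the form $\eta\wedge\beta$, so that $d^{2}\Phi=0$ becomes informative; and (ii) the final fibrewise lemma separating real dimension $4$ from higher quaternionic dimensions. The intermediate manipulations are routine.
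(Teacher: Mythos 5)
The paper gives no proof of this statement to compare against: Theorem \ref{condizione-olszak} is quoted from [Olszak] and used as a black box, so your argument is an independent, self-contained proof built only from identities the paper does record in its Preliminaries (\eqref{dPhi}, \eqref{nablaxi}, \eqref{varie5}, ${\mathcal L}_\xi\phi=3\phi h$, $\xi$ Killing). I checked the main steps and they are correct: the hypothesis makes $h|_{\mathcal D}$ invertible onto $\mathcal D$, so \eqref{varie5} with $\eta(X)=\eta(Y)=0$ forces $(\nabla_X\phi)Y\in[\xi]$ and hence, via \eqref{dPhi}, the vanishing of $d\Phi$ on horizontal triples; combined with $\iota_\xi d\Phi={\mathcal L}_\xi\Phi=3g(\cdot,\phi h\,\cdot)$ this gives $d\Phi=3\eta\wedge\beta$, and contracting $d^2\Phi=0$ with $\xi$ does yield $d\eta\wedge\beta=0$; the reduction to $\omega_J\wedge\omega_{I_3}=0$ for the anticommuting pair $\bigl(J,I_3\bigr)$ and the fibrewise dichotomy are right (for $m\ge2$ the coefficient of $e^{0146}$ in $\omega_{L_i}\wedge\omega_{L_j}$ on $\mathbb{H}^2$ is $1$, so the wedge cannot vanish). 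Two points you pass over quickly but which are genuinely load-bearing: (i) the standing non-Sasakian convention is essential, since a Sasakian manifold of any dimension satisfies the hypothesis with $\lambda=0$ -- you do invoke it, and $h\neq0\Rightarrow h^2\neq0$ uses skew-symmetry of $h$; (ii) the final lemma needs the fact that \emph{every} pair of anticommuting $g$-compatible complex structures on $\mathbb{R}^{4m}$ is $O(4m)$-conjugate to the standard pair $(L_i,L_j)$ on $\mathbb{H}^m$ (representation theory of $\mathbb{H}$ plus quaternionic Gram--Schmidt); this deserves to be stated as a separate lemma rather than left implicit in the phrase ``modelling the two structures as left multiplication''. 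As a consistency check, in dimension $5$ your identity $d\eta\wedge\beta=0$ is exactly what \eqref{mainSU} and \eqref{SU2} predict, since there $\beta$ is proportional to $\omega_2$ and $d\eta=-2\omega_3+2\lambda\omega_1$ with $\omega_i\wedge\omega_j=\delta_{ij}v$. What your route buys is a conceptual explanation of why $5$ is forced -- the closedness of $d\Phi$ collides with quaternionic linear algebra in quaternionic dimension $\ge2$ -- at the modest cost of the auxiliary conjugacy lemma.
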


\begin{proposition}
The eigenvalues of the operator $h^2$ are constant.
\end{proposition}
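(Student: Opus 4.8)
The plan is to reduce the statement to the power sums of $h^2$. Since $h^2$ is a $g$-symmetric operator, its eigenvalues (counted with multiplicities) are exactly the roots of its characteristic polynomial, so it suffices to show that the coefficients of that polynomial are constant functions on $M$. By Newton's identities these coefficients are universal polynomials in the traces $\mathrm{tr}(h^2),\mathrm{tr}(h^4),\dots,\mathrm{tr}(h^{2(2n+1)})$, where $\dim M=2n+1$; hence it is enough to prove that each $\mathrm{tr}(h^{2k})$, $k\geq 1$, is constant, and since $M$ is connected this amounts to showing that its differential vanishes.

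The key observation is the structure of $\nabla_X h^2$ read off from \eqref{varie7}. Because $g$ is parallel and $h^2$ is symmetric, each $\nabla_X h^2$ is again a symmetric operator; and \eqref{varie7} shows that it is off-diagonal with respect to the splitting $TM=[\xi]\oplus\ker\eta$: if $\eta(Y)=0$ then $(\nabla_X h^2)Y=g((\phi-h)h^2X,Y)\,\xi\in[\xi]$, while $(\nabla_X h^2)\xi=(\phi-h)h^2X\in\ker\eta$ (using $\eta\circ\phi=0$ and $\eta\circ h=0$). Differentiating $\mathrm{tr}((h^2)^{k+1})$ by the Leibniz rule and using the cyclicity of the trace gives $X\bigl(\mathrm{tr}(h^{2(k+1)})\bigr)=(k+1)\,\mathrm{tr}\bigl((h^2)^{k}(\nabla_X h^2)\bigr)$ for every $k\geq 0$. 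Now $(h^2)^{k}$ kills $\xi$ and preserves $\ker\eta$ (since $h\xi=0$ and $h(\ker\eta)\subseteq\ker\eta$ by skew-symmetry), whereas $\nabla_X h^2$ sends $\ker\eta$ into $[\xi]$; hence for $k\geq 1$ the composition $(h^2)^{k}(\nabla_X h^2)$ vanishes identically on $\ker\eta$ and maps $\xi$ into $\ker\eta$, so its trace, computed on an orthonormal frame $\{\xi,e_1,\dots,e_{2n}\}$ with $e_a\in\ker\eta$, is zero. The case $k=0$ is identical and recovers Olszak's fact that $\mathrm{tr}(h^2)$ is constant.

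Putting these together, every $\mathrm{tr}(h^{2k})$ has vanishing differential, hence is constant on the connected manifold $M$; by Newton's identities the characteristic polynomial of $h^2$ then has constant coefficients, so its roots, which are precisely the eigenvalues of $h^2$ with their multiplicities, do not depend on the point. I expect the only real difficulty to be conceptual rather than computational: one should resist differentiating the (possibly non-smooth) eigenvalue functions directly and instead pass to the power sums. For completeness one can also argue pointwise, choosing on the open dense set where the multiplicities are locally constant a smooth unit eigenvector field $Y$ with $h^2Y=\mu Y$ and computing $X(\mu)=g((\nabla_X h^2)Y,Y)$, which vanishes because $Y\perp\xi$ whenever $\mu\neq 0$ (eigenspaces for distinct eigenvalues of a symmetric operator are orthogonal, and $\xi$ lies in $\ker h^2$); but the power-sum argument is preferable since it requires no regularity discussion of the eigenvalues.
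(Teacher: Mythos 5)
Your proof is correct, and it takes a genuinely different route from the paper's. The paper argues directly: it picks a local unit eigenvector field $Y\perp\xi$ with $h^2Y=\mu Y$ and computes $0=g((\nabla_Xh^2)Y,Y)=X(\mu)$ from \eqref{varie7} — essentially the "for completeness" argument you sketch at the end, except that the paper does not address the regularity issue you correctly flag (smooth eigenvalue functions and eigenvector fields are only guaranteed where the multiplicities are locally constant). Your main argument sidesteps this entirely: \eqref{varie7} makes $\nabla_Xh^2$ off-diagonal for the splitting $[\xi]\oplus\ker\eta$, while $(h^2)^k$ kills $\xi$ and preserves $\ker\eta$, so $\mathrm{tr}\bigl((h^2)^k(\nabla_Xh^2)\bigr)=0$ and all power sums $\mathrm{tr}(h^{2k})$ are constant; Newton's identities then freeze the characteristic polynomial and hence the spectrum with multiplicities. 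What the power-sum route buys is a clean global statement with no discussion of where eigenvalues are differentiable; what the paper's route buys is brevity and a formula ($X(\mu)=0$) that is locally transparent. Both hinge on the same input, namely Olszak's identity \eqref{varie7}, so the difference is in the packaging rather than the underlying geometry; either version is acceptable, and yours is arguably the more careful one.
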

\begin{proof}
Let $\mu$ be an eigenvalue of $h^2$ and let $Y$ be a local unit vector field orthogonal to $\xi$ such that $h^2Y=\mu Y$. Applying \eqref{varie7} for any vector field $X$, and taking $Y=Z$
we get
\begin{align*}
0&=g((\nabla_Xh^2)Y,Y)\\
&=g(\nabla_X(h^2Y),Y)-g(h^2(\nabla_XY),Y)\\
&=X(\mu)g(Y,Y)+\mu g(\nabla_XY,Y)-g(\nabla_XY,h^2Y)\\
&=X(\mu)g(Y,Y)
\end{align*}
which implies that $X(\mu)=0$.
\end{proof}
\medskip

Notice that $0$ is an eigenvalue of $h^2$, since $h\xi=0$. Furthermore, being $h$ skew-symmetric,
the non-vanishing eigenvalues of $h^2$ are negative, so that the spectrum of $h^2$ is of type
\[\textrm{Spec}(h^2)=\{0,-\lambda_1^2,\ldots,-\lambda_r^2\},\]
$\lambda_i\ne0$ and $\lambda_i\neq\lambda_j$ for $i\ne j$.
Further, if $X$ is an eigenvector of $h^2$ with eigenvalue $-\lambda_i^2$, then $X$, $\phi X$, $hX$, $h\phi X$ are orthogonal eigenvectors of $h^2$ with eigenvalue $-\lambda_i^2$.

In the following we denote by $[\xi]$ the $1$-dimensional distribution generated by $\xi$,
and by ${\mathcal D}(0)$ and ${\mathcal D}(-\lambda_i^2)$ the distributions of the eigenvectors $0$ and
$-\lambda_i^2$ respectively.

\begin{theorem}\label{main1}
Let $M$ be a nearly Sasakian manifold with structure $(\phi,\xi,\eta,g)$ and let
$\mathrm{Spec}(h^2)=\{0,-\lambda_1^2,\ldots,-\lambda_r^2\}$ be the spectrum of $h^2$.
Then the distributions $\mathcal D(0)$ and $[\xi]\oplus\mathcal D(-\lambda_i^2)$
are integrable with totally geodesic leaves. In particular,
\begin{itemize}
\item[\textrm{a)}] the eigenvalue $0$ has multiplicity $2p+1$, $p\geq0$. If $p>0$, the leaves of $\mathcal D(0)$ are
$(2p+1)$-dimensional Sasakian manifolds;
\item[\textrm{b)}] each negative eigenvalue $-\lambda_i^2$ has multiplicity $4$ and the leaves of the
distribution $[\xi]\oplus {\mathcal D}(-\lambda_i^2)$ are $5$-dimensional nearly Sasakian (non-Sasakian) manifolds.
\end{itemize}
Therefore, the dimension of $M$ is $1+2p+4r$.
\end{theorem}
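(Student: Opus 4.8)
The plan is to prove simultaneously integrability and the totally geodesic property for each distribution by showing that $\nabla_X Y$ remains tangent whenever $X,Y$ are; the whole argument rests on Olszak's formula \eqref{varie7} for $\nabla h^2$. First I would record the linear-algebraic preliminaries: $h^2$ is $g$-self-adjoint (because $h$ is skew-symmetric), so its eigendistributions are mutually $g$-orthogonal, and they are smooth since the eigenvalues are constant by the previous Proposition; moreover $\phi$ and $h$ commute with $h^2$ (as they anticommute with each other), hence preserve every eigendistribution; finally $\xi\in\mathcal D(0)$ and $\mathcal D(-\lambda_i^2)\perp\xi$, so $\eta$ vanishes on $\mathcal D(-\lambda_i^2)$.

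For $\mathcal D(0)=\ker h^2$: if $X,Y$ are sections of $\mathcal D(0)$, then $h^2X=0$, so the right-hand side of \eqref{varie7} vanishes, while its left-hand side equals $\nabla_X(h^2Y)-h^2(\nabla_XY)=-h^2(\nabla_XY)$; hence $\nabla_XY\in\ker h^2=\mathcal D(0)$, which proves that $\mathcal D(0)$ is integrable with totally geodesic leaves. For $\mathcal E_i:=[\xi]\oplus\mathcal D(-\lambda_i^2)$ I would argue in two steps. If $X$ is a section of $\mathcal E_i$, then by \eqref{nablaxi} $\nabla_X\xi=-\phi X+hX$ lies in $\mathcal D(-\lambda_i^2)$, since $\phi$ and $h$ preserve that eigenspace and annihilate the $\xi$-part of $X$. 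If moreover $Y$ is a section of $\mathcal D(-\lambda_i^2)$, then $\eta(Y)=0$ and \eqref{varie7} reduces to $(\nabla_Xh^2)Y=g((\phi-h)h^2X,Y)\xi$; since the left-hand side is $-\lambda_i^2\nabla_XY-h^2(\nabla_XY)=-(h^2+\lambda_i^2 I)(\nabla_XY)$, we get $(h^2+\lambda_i^2 I)(\nabla_XY)\in[\xi]$. Decomposing $\nabla_XY$ along $\mathcal D(0)\oplus\bigoplus_j\mathcal D(-\lambda_j^2)$ and using that $h^2+\lambda_i^2 I$ acts as $\lambda_i^2 I$ on $\mathcal D(0)$, as $0$ on $\mathcal D(-\lambda_i^2)$, and invertibly on $\mathcal D(-\lambda_j^2)$ for $j\ne i$, one reads off that the $\mathcal D(-\lambda_j^2)$-components ($j\ne i$) vanish and the $\mathcal D(0)$-component is proportional to $\xi$; thus $\nabla_XY\in\mathcal E_i$. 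By the Leibniz rule, these two facts give $\nabla_XY\in\mathcal E_i$ for all sections $X,Y$ of $\mathcal E_i$, so $\mathcal E_i$ is integrable with totally geodesic leaves.

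Next I would check that the induced structures on the leaves are again nearly Sasakian: each of $\mathcal D(0)$ and $\mathcal E_i$ is $\phi$-invariant and contains $\xi$, so the restriction of $(\phi,\xi,\eta,g)$ to a leaf $L$ is an almost contact metric structure, and since $L$ is totally geodesic its Levi-Civita connection is the restriction of $\nabla$; hence \eqref{main} passes to $L$, and the associated operator of $L$ is $h|_{TL}$. For the eigenvalue $-\lambda_i^2$: given a unit $X\in\mathcal D(-\lambda_i^2)$, the vectors $X,\phi X,hX,h\phi X$ are mutually $g$-orthogonal and nonzero (using $\eta(X)=0$, skew-symmetry of $h$, and $h\phi=-\phi h$), so $\dim\mathcal D(-\lambda_i^2)\ge 4$; on a leaf $L$ of $\mathcal E_i$ one has $(h|_{TL})^2=-\lambda_i^2(I-\eta\otimes\xi)$, so Theorem \ref{condizione-olszak} forces $\dim L=5$, whence $\dim\mathcal D(-\lambda_i^2)=4$ and $L$ is non-Sasakian because $h|_{TL}\ne0$. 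For the eigenvalue $0$: $\mathcal D(0)=[\xi]\oplus(\mathcal D(0)\cap\ker\eta)$, and $\phi$ restricts to a complex structure on $\mathcal D(0)\cap\ker\eta$, so the latter has even dimension $2p$ and $\dim\mathcal D(0)=2p+1$; if $p>0$ the leaves of $\mathcal D(0)$ have vanishing $h$ and hence are Sasakian, by the characterization recalled in Section 2. Summing over the orthogonal splitting $TM=\mathcal D(0)\oplus\bigoplus_{i=1}^r\mathcal D(-\lambda_i^2)$ yields $\dim M=(2p+1)+4r$. The only genuinely delicate point is the $\mathcal E_i$ computation, where one must account for the a priori unexpected $\xi$-component of $\nabla_XY$; the rest is bookkeeping with the eigendecomposition of $h^2$ together with an appeal to Olszak's rigidity result, Theorem \ref{condizione-olszak}.
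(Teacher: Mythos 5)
Your proof is correct and follows essentially the same route as the paper: both arguments hinge on Olszak's formula \eqref{varie7} to show $\nabla_XY$ stays tangent to each distribution, use $\phi$-invariance for the parity of $\dim\mathcal D(0)$, invoke Theorem \ref{condizione-olszak} to get $\dim L=5$ for the leaves of $[\xi]\oplus\mathcal D(-\lambda_i^2)$, and use the $h=0$ characterization for the Sasakian leaves. The only (immaterial) difference is that where you decompose $(h^2+\lambda_i^2 I)\nabla_XY\in[\xi]$ along all eigendistributions, the paper applies $\phi^2$ to $h^2(\nabla_XY)=\mu\nabla_XY-\mu g(\phi X-hX,Y)\xi$ to kill the $\xi$-component.
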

\begin{proof}
Consider an eigenvector $X$ with eigenvalue $\mu$. From \eqref{nablaxi} we deduce that
$\nabla_X\xi$ is an eigenvector with eigenvalue $\mu$.
On the other hand, \eqref{varie7} implies $\nabla_\xi h^2=0$,
so that $\nabla_\xi X$ is also an eigenvector with eigenvalue $\mu$.

Now, if $X,Y$ are eigenvectors with eigenvalue $\mu$,
orthogonal to $\xi$, from \eqref{varie7}, we get
\[h^2(\nabla_XY)=\mu\nabla_XY-\mu g(\phi X-hX,Y)\xi.\]
If $\mu=0$, we immediately get that $\nabla_XY\in {\mathcal D}(0)$. If $\mu\ne 0$, we have
\[h^2(\phi^2\nabla_XY)=\phi^2(h^2\nabla_XY)=\mu \phi^2(\nabla_XY)\]
and thus $\nabla_XY=-\phi^2 \nabla_XY +\eta(\nabla_XY)\xi$ belongs to the distribution
$[\xi]\oplus {\mathcal D}(\mu)$. This proves the first part of the Theorem.

If $X$ is an eingenvector of $h^2$ orthogonal to $\xi$, with eigenvalue $\mu$, also
 $\phi X$ is an eingenvector with the same eigenvalue $\mu$. Hence, the eigenvalue $0$ has odd multiplicity $2p+1$, for some integer $p\geq 0$. If $p>0$ the structure $(\phi,\xi,\eta,g)$ induces a nearly Sasakian structure on the leaves of the distribution $\mathcal D(0)$ whose associated tensor $h$ vanishes. Therefore, the induced structure is Sasakian.

As regards b), since $\phi$ preserves each distribution ${\mathcal D}(-\lambda_{i}^{2})$, the structure $(\phi,\xi,\eta,g)$ induces a nearly Sasakian structure on the leaves of the distribution $[\xi]\oplus\mathcal D(-\lambda_i^2)$, which we denote in the same manner. For such a structure the operator $h$ satisfies
\[h^2=-\lambda_i^2(I-\eta\otimes\xi).\]
By Theorem \ref{condizione-olszak}, the leaves of this distribution are $5$-dimensional, so that the multiplicity of the eigenvalue $-\lambda_i^2$ is $4$.
\end{proof}

Now using Theorem \ref{main1} we prove that every nearly Sasakian manifold is foliated by another foliation, which is both Riemannian and totally geodesic, and such that the leaf space is K\"{a}hler. Before we need the following preliminary result.

\begin{lemma}\label{utile}
Let $(M,\phi,\xi,\eta,g)$ be a nearly Sasakian manifold. For any $X\in{\mathcal D}(-\lambda_{i}^{2})$, $i\in\left\{1,\ldots,r\right\}$, and for any $Z\in{\mathcal D}(0)$ one has that $\nabla_{Z}X\in {\mathcal D}(-\lambda_{1}^{2})\oplus\cdots\oplus{\mathcal D}(-\lambda_{r}^{2})\oplus[\xi] $.
\end{lemma}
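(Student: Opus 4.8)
The plan is to read the claim off directly from Olszak's formula \eqref{varie7} for the covariant derivative of $h^2$. First I would fix a point of $M$ and choose smooth local vector fields $Z$ and $X$ that are sections of $\mathcal{D}(0)$ and of $\mathcal{D}(-\lambda_{i}^{2})$ respectively on a neighbourhood of that point. Since $h^2$ has constant eigenvalues (by the Proposition above), these eigendistributions are smooth and of constant rank, so such local sections exist; and since $\nabla_Z X$ at a point depends only on $Z$ at the point and on $X$ along a curve through it, establishing the pointwise statement for these local extensions suffices.

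Next I would specialise \eqref{varie7}, replacing its $X$ by $Z$ and its $Y$ by $X$. Two facts make the right-hand side collapse: $h^{2}Z=0$, because $Z$ is a section of the $0$-eigendistribution $\mathcal{D}(0)$; and $\eta(X)=g(X,\xi)=0$, because $X$ is an eigenvector of the symmetric operator $h^2$ for a non-zero eigenvalue while $\xi\in\mathcal{D}(0)$, so $X\perp\xi$. Hence $(\nabla_{Z}h^{2})X=0$.

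Finally I would expand $(\nabla_{Z}h^{2})X=\nabla_{Z}(h^{2}X)-h^{2}(\nabla_{Z}X)$ and use that $X$ is a section of $\mathcal{D}(-\lambda_{i}^{2})$, i.e.\ $h^{2}X=-\lambda_{i}^{2}X$ with $\lambda_{i}$ constant; this gives $h^{2}(\nabla_{Z}X)=\nabla_{Z}(h^{2}X)=-\lambda_{i}^{2}\,\nabla_{Z}X$, so $\nabla_{Z}X$ is again an eigenvector of $h^2$ with eigenvalue $-\lambda_{i}^{2}$. In particular $\nabla_{Z}X\in\mathcal{D}(-\lambda_{i}^{2})\subseteq\mathcal{D}(-\lambda_{1}^{2})\oplus\cdots\oplus\mathcal{D}(-\lambda_{r}^{2})\oplus[\xi]$, which is in fact slightly stronger than the stated conclusion.

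There is no genuine obstacle here: the lemma is an immediate algebraic consequence of \eqref{varie7} together with $h\xi=0$ and the constancy of the eigenvalues. The only thing to keep honest is the bookkeeping about local extensions from the first step — one must be sure the chosen vector fields really lie in the eigendistributions, so that $h^{2}X=-\lambda_{i}^{2}X$ holds identically rather than at a single point. (The role of the lemma later is precisely to control $\nabla_{Z}X$ for $Z$ in the complement $\mathcal{D}(0)$ of the big distribution $[\xi]\oplus\mathcal{D}(-\lambda_{1}^{2})\oplus\cdots\oplus\mathcal{D}(-\lambda_{r}^{2})$, which is what is needed to show the latter has totally geodesic leaves with K\"ahler leaf space.)
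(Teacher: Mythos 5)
Your proof is correct, but it follows a different route from the paper's. The paper's argument is a one\--liner that leans on Theorem \ref{main1}: since $\mathcal{D}(0)$ is integrable with totally geodesic leaves, for any $Z'\in\mathcal{D}(0)$ orthogonal to $\xi$ one has $\nabla_{Z}Z'\in\mathcal{D}(0)$, hence $g(\nabla_{Z}X,Z')=-g(\nabla_{Z}Z',X)=0$, which is exactly the stated conclusion. You instead go back to the source, Olszak's identity \eqref{varie7}, and observe that $h^{2}Z=0$ kills its entire right\--hand side (your extra remark that $\eta(X)=0$ is true but not even needed), so $h^{2}(\nabla_{Z}X)=\nabla_{Z}(h^{2}X)=-\lambda_{i}^{2}\nabla_{Z}X$ by constancy of $\lambda_{i}$. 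This is the same computation the paper uses inside the proof of Theorem \ref{main1} for pairs of eigenvectors with a common eigenvalue, here applied to the mixed pair $(Z,X)$. Your version buys a genuinely stronger conclusion, namely $\nabla_{Z}X\in\mathcal{D}(-\lambda_{i}^{2})$ with no $\xi$\--component and no leakage into the other eigendistributions (one can confirm the vanishing of the $\xi$\--component independently: $g(\nabla_{Z}X,\xi)=-g(X,-\phi Z+hZ)=0$ since $\phi$ and $h$ preserve $\mathcal{D}(0)$), and it is self\--contained modulo \eqref{varie7}; the paper's version is shorter given what Theorem \ref{main1} already provides. Your bookkeeping about local extensions within the (smooth, constant\--rank) eigendistributions is also fine.
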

\begin{proof}
For any $Z'\in{\mathcal D}(0)$ orthogonal to $\xi$, since the distribution ${\mathcal D}(0)$ is integrable with totally geodesic leaves, we have that
$g\left(\nabla_{Z}X , Z'\right) = - g\left( \nabla_{Z}Z' , X\right)=0.$
\end{proof}

\begin{theorem}\label{main6}
With the notation  of Theorem \ref{main1}, assuming $p>0$, the distribution $\mathcal{D}(-\lambda_{1}^2)\oplus\cdots\oplus\mathcal{D}(-\lambda_{r}^2)\oplus\left[\xi\right]$ is integrable and defines a transversely K\"{a}hler foliation with totally geodesic leaves.
\end{theorem}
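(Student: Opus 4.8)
The plan is to show three things: first, that $\mathcal{E} := \mathcal{D}(-\lambda_1^2)\oplus\cdots\oplus\mathcal{D}(-\lambda_r^2)\oplus[\xi]$ is integrable; second, that its leaves are totally geodesic; and third, that the induced transverse geometry (modeled on the leaf space $\mathcal{D}(0)/[\xi]$, away from $\xi$) is Kähler. For integrability, I would take $X,Y\in\mathcal{D}(-\lambda_i^2)$ (or in distinct eigendistributions) and compute $[X,Y]=\nabla_X Y-\nabla_Y X$; pairing against an arbitrary $Z'\in\mathcal{D}(0)$ orthogonal to $\xi$ and using that $\mathcal{D}(0)$ is totally geodesic (Theorem \ref{main1}), together with Lemma \ref{utile} applied symmetrically, should force $g([X,Y],Z')=0$, while $[\xi,X]$ stays in $\mathcal{E}$ because $\nabla_\xi$ preserves eigenspaces of $h^2$ (shown in the proof of Theorem \ref{main1}) and $\nabla_X\xi=-\phi X+hX\in\mathcal{E}$. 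Since $\xi\in\mathcal{E}$ automatically, this gives integrability of $\mathcal{E}$.

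For the totally geodesic property I would show $g(\nabla_U V, Z')=0$ for all $U,V\in\mathcal{E}$ and $Z'\in\mathcal{D}(0)\ominus[\xi]$. The cases $U,V\in\mathcal{D}(-\lambda_i^2)$ follow from Lemma \ref{utile} by moving the derivative: $g(\nabla_U V,Z')=-g(V,\nabla_U Z')$, and $\nabla_U Z'\in\mathcal{E}$ would be exactly the content of a mild strengthening of Lemma \ref{utile} (which only treats $\nabla_Z X$ for $Z\in\mathcal D(0)$; I'd need $\nabla_X Z\in\mathcal E$ too, obtainable by the same totally-geodesic-leaf argument for $\mathcal D(0)$ combined with the eigenvalue bookkeeping via \eqref{varie7}). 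The cases involving $\xi$ reduce to $\nabla_\xi\xi=0$, $\nabla_\xi U\in\mathcal E$, and $\nabla_U\xi=-\phi U+hU\in\mathcal E$. The Riemannian property of the foliation comes for free since $\xi$ is Killing and the foliation is the orthogonal complement of the (totally geodesic) $\mathcal D(0)$, or more directly because both the tangent and normal distributions are invariant in a way compatible with $g$.

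The transversely Kähler claim is where the real work lies. The transverse almost complex structure is $\bar\phi$ induced by $\phi$ on $\mathcal D(0)\ominus[\xi]$ (note $\phi$ preserves $\mathcal D(0)$ and, on it, $h=0$, so $\phi$ behaves there as in the Sasakian case), with transverse metric $g$ restricted to $\mathcal D(0)\ominus[\xi]$ and transverse Kähler form $\Phi|_{\mathcal D(0)}$. I would verify: (i) $\bar\phi$ is well-defined on the leaf space, i.e. $\mathcal L_U\phi$ preserves $\mathcal D(0)$ modulo $\mathcal E$ for $U\in\mathcal E$ — using $\mathcal L_\xi\phi=3\phi h$, which vanishes on $\mathcal D(0)$, and the projection formulas for $\nabla\phi$; (ii) the transverse integrability $\bar N_{\bar\phi}=0$, which follows because the leaves of $\mathcal D(0)$, when $p>0$, are Sasakian (Theorem \ref{main1}), hence transversely the $\phi$ on $\mathcal D(0)\ominus[\xi]$ is the integrable one; and (iii) transverse parallelism $\bar\nabla\bar\phi=0$, reducing to showing $g((\nabla_Z\phi)Z',Z'')$ matches the Sasakian/Kähler pattern for $Z,Z',Z''\in\mathcal D(0)$, which is immediate from \eqref{dPhi} restricted to $\mathcal D(0)$ since there $d\Phi$ has only the Sasakian-type contribution (as $h=0$ on $\mathcal D(0)$ kills the extra terms in \eqref{varie5}). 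The main obstacle I anticipate is (iii): controlling the components of $(\nabla_Z\phi)Z'$ that lie outside $\mathcal D(0)$ and confirming they do not obstruct the transverse Kähler identity — this requires carefully combining \eqref{dPhi}, \eqref{varie5}, and the splitting, rather than any single cited formula. Once (i)–(iii) hold, O'Neill-type submersion arguments (or the standard characterization of transversely Kähler foliations) finish the proof.
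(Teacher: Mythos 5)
Your overall strategy --- integrability and total geodesy via the eigenvalue bookkeeping of \eqref{varie7}, the bundle-like property from Lemma \ref{utile} and the Killing field $\xi$, and the transverse K\"ahler property deduced from the Sasakian structure on the leaves of $\mathcal{D}(0)$ --- is the same as the paper's. Two points, however, need attention. First, the ``mild strengthening of Lemma \ref{utile}'' you invoke, namely $\nabla_X Z'\in\mathcal{E}$ for $X\in\mathcal{D}(-\lambda_i^2)$ and $Z'\in\mathcal{D}(0)$ orthogonal to $\xi$, is not the right statement: even if it held, it would not give $g(V,\nabla_X Z')=0$ for $V\in\mathcal{D}(-\lambda_j^2)\subset\mathcal{E}$. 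What \eqref{varie7} actually yields is $(\nabla_Xh^2)Z'=0$ (because $(\phi-h)h^2X\in\mathcal{D}(-\lambda_i^2)$ is orthogonal to $Z'$), hence $h^2\nabla_XZ'=\nabla_X(h^2Z')=0$, i.e.\ $\nabla_XZ'\in\mathcal{D}(0)$; it is this orthogonality to the negative eigendistributions that closes the argument. The paper runs the equivalent computation directly on $g(\nabla_XY,Z')$ for $Y\in\mathcal{D}(-\lambda_j^2)$. This is a repairable slip, with the same tool you cite.

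Second, and more seriously, the projectability of $\phi$ (your step (i)) is where the real work of the proof lies, and you supply no argument for it in the directions $X\in\mathcal{D}(-\lambda_i^2)$; the identity $\mathcal{L}_\xi\phi=3\phi h$ only disposes of the direction $\xi$. Note that \eqref{varie5} gives no information here, since its third slot is $hZ$, which vanishes for $Z\in\mathcal{D}(0)$; there is no single-formula shortcut. The paper's argument is a genuinely nontrivial bootstrap: for a basic $Z$ and $Z'\in\mathcal{D}(0)$ orthogonal to $\xi$, one expands $g([X,\phi Z],Z')$ via \eqref{dPhi} and Lemma \ref{utile}, and then shows
$d\Phi(X,Z,Z')=g\left((\nabla_X\phi)Z',Z\right)=\frac{1}{3}\,d\Phi(X,Z,Z')$
by expanding $d\Phi$ on the basic triple and using the bundle-like property, which forces $d\Phi(X,Z,Z')=0$. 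Without this step the transverse almost complex structure is not even well defined on the local leaf space, so your items (ii) and (iii) --- which are otherwise correctly reduced to the Sasakian geometry of the $\mathcal{D}(0)$-leaves (the paper invokes \eqref{Rxi}, the integrability of $\mathcal{D}(0)$ and $h\xi=0$ for the same purpose) --- cannot get started. You have in fact misplaced the anticipated obstacle: once projectability is established, your step (iii) is routine.
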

\begin{proof} We already know that each distribution $[\xi]\oplus\mathcal{D}(-\lambda_i^2)$ is integrable with totally geodesic leaves. Moreover, by \eqref{varie7}, one has for any $X\in{\mathcal D}(-\lambda_{i}^2)$, $Y\in{\mathcal D}(-\lambda_{j}^2)$ and $Z\in{\mathcal D}(0)$  orthogonal to $\xi$,
\[
g(\nabla_{X}Y,Z)=-\frac{1}{\lambda_{j}^{2}}g(\nabla_{X} h^{2}Y,Z)=-\frac{1}{\lambda_{j}^{2}}g( (\nabla_{X}h^2)Y + h^{2}\nabla_{X}Y , Z )=-\frac{1}{\lambda_{j}^{2}}  g(\nabla_{X}Y, h^{2}Z)=0.
\]
Now we prove that $\mathcal{D}(-\lambda_{1}^2)\oplus\cdots\oplus\mathcal{D}(-\lambda_{r}^2)\oplus\left[\xi\right]$ defines a Riemannian foliation. First, for any $Z,Z'\in{\mathcal D}(0)$, $({\mathcal L}_{\xi}g)(Z,Z')=0$ since $\xi$ is Killing.
Next, by applying Lemma \ref{utile} we conclude that, for any $X \in {\mathcal D}(-\lambda_{i}^2)$,
\begin{equation*}
({\mathcal L}_{X}g)(Z,Z') =g(\nabla_{Z}X,Z') + g(\nabla_{Z'}X,Z)=0.
\end{equation*}
Now let us prove that also the tensor field  $\phi$ is projectable, i.e. it maps basic vector fields into basic vector fields. Let $Z \in {\mathcal D}(0)$, $Z$ orthogonal to $\xi$, be a basic vector field, that is $[\xi,Z], [X,Z] \in {\mathcal D}(-\lambda_{1}^{2})\oplus\cdots\oplus{\mathcal D}(-\lambda_{r}^{2})\oplus[\xi]$  for any $X\in{\mathcal D}(-\lambda_{i}^{2})$.  Let us prove that $g([X,\phi Z], Z')=0$ for any $Z'\in{\mathcal D}(0)$ orthogonal to $\xi$. By using \eqref{dPhi} and Lemma \ref{utile} we get
\begin{align}\label{intermedia2}
g\left( [X,\phi Z], Z'\right) &=g\left(\nabla_{X}\phi Z,Z'\right) - g\left(\nabla_{\phi Z}X,Z'\right) \nonumber \\
&= g\left( (\nabla_{X}\phi)Z,Z'\right) + g\left(\phi\nabla_{X}Z,Z'\right) \nonumber\\
&=-\frac{1}{3}d\Phi(X,Z,Z')- g(\nabla_{X}Z,\phi Z').
\end{align}
Let us check that each summand in  \eqref{intermedia2} vanishes. First notice that, since $Z$ is basic and again by Lemma \ref{utile}, one has $g(\nabla_{X}Z,\phi Z')=g(\nabla_{Z}X, \phi Z') + g([X,Z],\phi Z')=0$.
Further, since the Riemannian metric $g$ is bundle-like, and using Lemma \ref{utile} and \eqref{dPhi}, we have
\begin{align*}
 d \Phi(X,Z,Z') &= X(\Phi(Z,Z')) - \Phi([X,Z],Z') - \Phi([Z,Z'],X) - \Phi([Z',X],Z)\\
&=X(g(Z,\phi Z')) - g([X,Z],\phi Z') - g([Z',X],\phi Z)\\
&=g([X, \phi Z'],Z)  - g([Z',X],\phi Z)\\
&=g(\nabla_{X}\phi Z', Z) - g(\nabla_{\phi Z'}X,Z) -g(\nabla_{Z'}X,\phi Z) + g(\nabla_{X}Z',\phi Z)\\
&=g\left(\left(\nabla_{X}\phi\right)Z',Z\right)\\
&=\frac{1}{3}d\Phi(X,Z,Z'),
\end{align*}
from which it follows that $d \Phi(X,Z,Z')=0$. Therefore, in view of \eqref{intermedia2}, we have $g\left( [X,\phi Z], Z'\right)=0$ for any $Z'\in{\mathcal D}(0)$ orthogonal to $\xi$, and thus we conclude that $\phi Z$ is basic.

Thus we have proved that the Riemannian metric $g$ and the tensor field $\phi$ are projectable with respect to the foliation ${\mathcal D}(-\lambda_{1}^{2})\oplus\cdots\oplus{\mathcal D}(-\lambda_{r}^{2})\oplus[\xi]$. Finally, from \eqref{Rxi}, the integrability of ${\mathcal D}(0)$ and $h\xi=0$ it follows that ${\mathcal D}(-\lambda_{1}^{2})\oplus\cdots\oplus{\mathcal D}(-\lambda_{r}^{2})\oplus[\xi]$ is transversely K\"{a}hler.
\end{proof}

In view of Theorem \ref{main1} it becomes of great importance the study of $5$-dimensional nearly Sasakian  manifolds. This will be precisely the subject of the next Section.

\section{Nearly Sasakian manifolds and $SU(2)$-structures}\label{SU2section}

Let $M$ be a $5$-dimensional manifold. An $SU(2)$-structure on $M$, that is an $SU(2)$-reduction of the bundle $L(M)$ of linear frames on $M$, is equivalent to the existence of three almost contact metric structures $(\phi_1,\xi,\eta,g)$, $(\phi_2,\xi,\eta,g)$, $(\phi_2,\xi,\eta,g)$ related by
\begin{equation}\label{quaternionic}
\phi_i\phi_j=\phi_k=-\phi_j\phi_i
\end{equation}
for any even permutation $(i,j,k)$ of $(1,2,3)$. In \cite{CS} Conti and Salamon proved that, in the spirit of special geometries, such a structure is equivalently determined by a quadruplet $(\eta,\omega_1,\omega_2,\omega_3)$, where $\eta$ is a $1$-form and $\omega_i$, $i\in\{1,2,3\}$, are $2$-forms, satisfying
\begin{equation}\label{SU2}
\omega_i\wedge\omega_j=\delta_{ij}v
\end{equation}
for some $4$-form $v$ with $v\wedge\eta\ne0$, and
\begin{equation}\label{SU2b}
X\lrcorner\,\omega_1=Y\lrcorner\,\omega_2\Longrightarrow\omega_3(X,Y)\geq 0.
\end{equation}
The endomorphisms $\phi_i$ of $TM$, the Riemannian metric $g$ and the $2$-forms $\omega_i$ are related by
\[\omega_i(X,Y)=g(\phi_i X,Y),\]
(see also \cite{BV}). A well-known class of $SU(2)$-structures on a $5$-dimensional manifold is given by \emph{Sasaki-Einstein structures}, characterized by the following differential equations:
\begin{equation}\label{S-E}
d\eta=-2\omega_3,\qquad d\omega_1=3\eta\wedge\omega_2,\qquad d\omega_2=-3\eta\wedge\omega_1.
\end{equation}
For such a manifold the almost contact metric structure $(\phi_3,\xi,\eta,g)$ is Sasakian, with Einstein Riemannian metric $g$. A Sasaki-Einstein $5$-manifold may be equivalently defined as a Riemannian manifold $(M,g)$ such that the product $M\times \mathbb{R}_+$ with the cone metric $dt^2+t^2g$ is K\"ahler and Ricci-flat (Calabi-Yau).

In \cite{CS}, Conti and Salamon introduced hypo structures as a natural generalization of Sasaki-Einstein structures. Indeed, an $SU(2)$-structure $(\eta,\omega_1,\omega_2,\omega_3)$ is called \emph{hypo} if
\begin{equation}\label{hypo}
d\omega_3=0,\qquad d(\eta\wedge\omega_1)=0,\qquad d(\eta\wedge\omega_2)=0.
\end{equation}
These structures arise naturally on hypersurfaces of $6$-manifolds endowed with an integrable $SU(3)$-structure.
In \cite{FISU} the authors introduced \emph{nearly hypo} structures, defined as $SU(2)$-structures $(\eta,\omega_1,\omega_2,\omega_3)$ satisfying
\begin{equation}
d\omega_1=3\eta\wedge\omega_2,\qquad d(\eta\wedge\omega_3)=-2\omega_1\wedge\omega_1.
\end{equation}
Such  structures arise on hypersurfaces of nearly K\"ahler $SU(3)$-manifolds.

We shall provide an equivalent notion of nearly Sasakian $5$-manifolds in terms of $SU(2)$-structures. First we state the following lemmas.

\begin{lemma}\label{SU2-nablaphi}
Let $M$ be a $5$-manifold with an $SU(2)$-structure $\left\{(\phi_i,\xi,\eta,g)\right\}_{i\in\left\{1,2,3\right\}}$. Then for any even permutation $(i,j,k)$ of $(1,2,3)$, we have
\begin{align}\label{N}
g(N_{\phi_i}(X,Y),\phi_j Z)&={}-d\omega_j(X,Y,Z)+d\omega_j(\phi_i X,\phi_i Y,Z)\\&\quad\,\,{}+d\omega_k(\phi_i X,Y,Z)+d\omega_k(X,\phi_i Y,Z).\nonumber
\end{align}
\end{lemma}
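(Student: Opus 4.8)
The plan is to compute both sides of the claimed identity directly from the definition of the Nijenhuis tensor $N_{\phi_i} = [\phi_i,\phi_i] + d\eta\otimes\xi$... actually, since $N_{\phi_i}$ as written in the preliminaries is $[\phi_i,\phi_i] + \eta\otimes\xi$ — wait, I should be careful: the relevant Nijenhuis-type tensor here should be paired against $\phi_j Z$, and $\eta(\phi_j Z) = 0$, so the $\eta\otimes\xi$ term contributes nothing and we may as well work with the pure Nijenhuis bracket $[\phi_i,\phi_i](X,Y) = \phi_i^2[X,Y] + [\phi_i X,\phi_i Y] - \phi_i[\phi_i X, Y] - \phi_i[X,\phi_i Y]$. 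First I would expand $g([\phi_i,\phi_i](X,Y),\phi_j Z)$ and rewrite each Lie bracket via the Levi-Civita connection, using $[U,V] = \nabla_U V - \nabla_V U$, so that everything is expressed in terms of $(\nabla_\bullet \phi_i)$ acting on $X$, $Y$, $Z$ and on $\phi_i X$, $\phi_i Y$. The key algebraic inputs are the quaternionic relations \eqref{quaternionic} (which let me convert $\phi_i$-images into $\phi_j$- and $\phi_k$-images), the compatibility $g(\phi_i U, V) = -g(U,\phi_i V)$, and the identity $g((\nabla_U \phi_i)V, W) = -g(V,(\nabla_U\phi_i)W)$ which follows from $\phi_i$ being skew with respect to $g$ and $\nabla g = 0$.

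Next I would bring in the standard formula expressing $d\omega_j$ in terms of covariant derivatives of $\phi_j$: since $\omega_j = g(\phi_j\,\cdot\,,\cdot\,)$ and $\nabla$ is torsion-free, one has
\begin{equation*}
d\omega_j(X,Y,Z) = g((\nabla_X\phi_j)Y,Z) + g((\nabla_Y\phi_j)Z,X) + g((\nabla_Z\phi_j)X,Y).
\end{equation*}
I would write out all four terms on the right-hand side of \eqref{N} — namely $-d\omega_j(X,Y,Z)$, $d\omega_j(\phi_i X,\phi_i Y,Z)$, $d\omega_k(\phi_i X,Y,Z)$, $d\omega_k(X,\phi_i Y,Z)$ — using this formula, and then systematically convert every occurrence of $\nabla\phi_j$ and $\nabla\phi_k$ into $\nabla\phi_i$ by differentiating the relations $\phi_j = \phi_k\phi_i$, $\phi_k = \phi_i\phi_j$, etc. (e.g. $(\nabla_U\phi_k)V = (\nabla_U\phi_i)\phi_j V + \phi_i(\nabla_U\phi_j)V$, and similarly cyclically), together with $\phi_i^2 = -I + \eta\otimes\xi$. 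The aim is to show the resulting expression, after cancellation, matches the connection-form of $g([\phi_i,\phi_i](X,Y),\phi_j Z)$ computed in the first step.

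The main obstacle I anticipate is bookkeeping: there will be a proliferation of terms of the form $g((\nabla_\bullet\phi_i)\bullet,\bullet)$ with various $\phi_i$'s inserted into the three slots, plus the $\eta\otimes\xi$ correction terms coming from $\phi_i^2 = -I+\eta\otimes\xi$ whenever two $\phi_i$'s collide. I would organize the computation by first checking that all terms involving $\eta$ (i.e. those where $\xi$ appears) cancel among themselves — here one uses $\eta(\phi_j Z) = 0$, $\phi_i\xi = 0$, and $\eta\circ\nabla\phi_i$-type identities — and then match the remaining "generic" terms slot by slot, exploiting the skew-symmetry $g((\nabla_U\phi_i)V,W) = -g((\nabla_U\phi_i)W,V)$ and the antisymmetry of $d\omega_j$ to reduce the number of independent terms. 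It may be cleanest to verify the identity on a local adapted frame, or to note that both sides are tensorial and bilinear-antisymmetric in $(X,Y)$, so it suffices to test on frame vectors. I do not expect any conceptual difficulty beyond this combinatorial reduction; the identity is a purely pointwise, algebraic-differential consequence of the $SU(2)$-structure relations.
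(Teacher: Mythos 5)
Your plan is correct and follows essentially the same route as the paper: both express $N_{\phi_i}$ through covariant derivatives of $\phi_i$, use the cyclic-sum formula $d\omega=\mathfrak{S}\,g((\nabla\phi)\cdot,\cdot)$, and invoke the differentiated quaternionic relations (the paper packages the latter as the auxiliary identity $(\nabla_Z\omega_{j})(\phi_{i}X,\phi_{i}Y)=(\nabla_Z\omega_{j})(X,Y)-(\nabla_Z\omega_{k})(X,\phi_{i}Y)-(\nabla_Z\omega_{k})(\phi_{i}X,Y)$, which absorbs exactly the leftover terms you anticipate). The only difference is direction of travel — you expand the right-hand side toward $\nabla\phi_i$ while the paper expands the left-hand side toward the $d\omega$'s — which is the same bookkeeping read backwards.
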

\begin{proof}
A simple computation using the quaternionic identities \eqref{quaternionic} shows that
\begin{equation*}
\phi_{i}(\nabla_Z\phi_{j})\phi_{i}=-\phi_{i}\nabla_Z\phi_{k}-\nabla_Z\phi_{j}+(\nabla_Z\phi_{k})\phi_{i}.
\end{equation*}
Therefore
\begin{align}\label{nablaomega}
(\nabla_Z\omega_{j})(\phi_{i}X,\phi_{i}Y)&=-g(\phi_{i}(\nabla_Z\phi_{j})\phi_{i}X,Y)\nonumber\\
&=-(\nabla_Z\omega_{k})(X,\phi_{i}Y)+(\nabla_Z\omega_{j})(X,Y)-(\nabla_Z\omega_{k})(\phi_{i}X,Y).
\end{align}
The tensor field $N_{\phi_{i}}$ can be written as
\begin{align*}
N_{\phi_{i}}(X,Y)&=(\nabla_{\phi_{i}X}\phi_{i})Y-(\nabla_{\phi_{i}Y}\phi_{i})X+(\nabla_X\phi_{i})\phi_{i}Y-(\nabla_Y\phi_{i})\phi_{i}X+\eta(X)\nabla_Y\xi-\eta(Y)\nabla_X\xi\\
&=(\phi_{i}(\nabla_Y\phi_{i})-\nabla_{\phi_{i}Y}\phi_{i})X-(\phi_{i}(\nabla_X\phi_{i})-\nabla_{\phi_{i}X}\phi_{i})Y+((\nabla_X\eta)(Y)-(\nabla_Y\eta)(X))\xi.
\end{align*}
It follows that
\[\phi_{j}N_{\phi_{i}}(X,Y)=-\phi_{k}(\nabla_Y\phi_{i})X-\phi_{j}(\nabla_{\phi_{i}Y}\phi_{i})X+\phi_{k}(\nabla_X\phi_{i})Y-\phi_{j}(\nabla_{\phi_{i}X}\phi_{i})Y.\]
Now, a straightforward computation shows that
\begin{align*}
g(N_{\phi_{i}}(X,Y),\phi_{j}Z)&=-d\omega_{j}(X,Y,Z)+d\omega_{j}(\phi_{i}X,\phi_{i}Y,Z)+d\omega_{k}(X,\phi_{i}Y,Z)+d\omega_{k}(\phi_{i}X,Y,Z)\\
&\quad\,\,+(\nabla_Z\omega_{j})(X,Y)-(\nabla_Z\omega_{k})(X,\phi_{i}Y)-(\nabla_Z\omega_{k})(\phi_{i}X,Y)\\
&\quad-(\nabla_Z\omega_{j})(\phi_{i}X,\phi_{i}Y).
\end{align*}
Applying \eqref{nablaomega}, we get \eqref{N}.
\end{proof}

\begin{lemma}
Let $M$ be a $5$-manifold endowed with an $SU(2)$-structure $\left\{(\phi_i,\xi,\eta,g)\right\}_{i\in\left\{1,2,3\right\}}$. Then for any even permutation $(i,j,k)$ of $(1,2,3)$ we have
\begin{align}\label{nablaphi}
2g((\nabla_X\phi_{i})Y,Z)&={}-d\omega_{i}(X,\phi_{i} Y,\phi_{i} Z)+d\omega_{i}(X,Y,Z)-d\omega_{j}(Y,Z,\phi_{k}X) \nonumber \\
&\quad+d\omega_{j}(\phi_{i}Y,\phi_{i}Z,\phi_{k}X)+d\omega_{k}(Y,\phi_{i}Z,\phi_{k}X)+d\omega_{k}(\phi_{i}Y,Z,\phi_{k}X) \nonumber \\
&\quad+d\eta(\phi_{i}Y,Z)\eta(X)-d\eta(\phi_{i}Z,Y)\eta(X)+d\eta(\phi_{i}Y,X)\eta(Z)-d\eta(\phi_{i}Z,X)\eta(Y).
\end{align}
\end{lemma}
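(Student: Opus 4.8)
The plan is to deduce \eqref{nablaphi} by combining a universal identity, valid for a \emph{single} almost contact metric structure, with Lemma \ref{SU2-nablaphi}, whose role is precisely to convert the Nijenhuis torsion into the $d\omega_j,d\omega_k$ contributions. For the universal identity, fix $i$ and write $(\nabla_X\omega_i)(Y,Z)=g((\nabla_X\phi_i)Y,Z)$; since $\nabla$ is torsion-free,
\[
d\omega_i(X,Y,Z)=(\nabla_X\omega_i)(Y,Z)-(\nabla_Y\omega_i)(X,Z)+(\nabla_Z\omega_i)(X,Y).
\]
Differentiating $\phi_i^2=-I+\eta\otimes\xi$ gives $(\nabla_X\phi_i)\phi_i Y=-\phi_i(\nabla_X\phi_i)Y+(\nabla_X\eta)(Y)\xi+\eta(Y)\nabla_X\xi$, hence, using $\eta\circ\phi_i=0$, $\phi_i\xi=0$ and $(\nabla_X\eta)(W)=g(\nabla_X\xi,W)$, the almost contact analogue of the almost Hermitian ``anti-invariance'' relation
\[
(\nabla_X\omega_i)(\phi_i Y,\phi_i Z)=-(\nabla_X\omega_i)(Y,Z)-g(\nabla_X\xi,\phi_iY)\,\eta(Z)+g(\nabla_X\xi,\phi_iZ)\,\eta(Y).
\]
Expanding also $[\phi_i,\phi_i](Y,Z)=(\nabla_{\phi_i Y}\phi_i)Z-(\nabla_{\phi_i Z}\phi_i)Y+\phi_i(\nabla_Z\phi_i)Y-\phi_i(\nabla_Y\phi_i)Z$, I would then form the combination
\[
d\omega_i(X,Y,Z)-d\omega_i(X,\phi_i Y,\phi_i Z)+g\bigl([\phi_i,\phi_i](Y,Z),\phi_i X\bigr),
\]
and argue, exactly as in the classical almost Hermitian computation, that all the $\nabla\omega_i$-terms cancel in pairs except $2(\nabla_X\omega_i)(Y,Z)=2g((\nabla_X\phi_i)Y,Z)$, while the $\phi_i^2=-I+\eta\otimes\xi$ corrections collected above reorganise, via the torsion-free formula for $d\eta$, into precisely the four $d\eta$-and-$\eta$ terms appearing in \eqref{nablaphi}. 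Note that, since $\phi_i\xi=0$, the vector $\phi_i X$ is $g$-orthogonal to $\xi$, and $N_{\phi_i}$ differs from $[\phi_i,\phi_i]$ only by a $\xi$-valued term, so $g([\phi_i,\phi_i](Y,Z),\phi_i X)=g(N_{\phi_i}(Y,Z),\phi_i X)$.

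It then remains to rewrite $g(N_{\phi_i}(Y,Z),\phi_i X)$. For an even permutation $(i,j,k)$ of $(1,2,3)$ the triple $(j,k,i)$ is again even, so \eqref{quaternionic} gives $\phi_j\phi_k=\phi_i$, whence $\phi_i X=\phi_j(\phi_k X)$. Applying Lemma \ref{SU2-nablaphi}, that is formula \eqref{N}, with $(X,Y,Z)$ replaced by $(Y,Z,\phi_k X)$,
\[
g\bigl(N_{\phi_i}(Y,Z),\phi_i X\bigr)=-d\omega_j(Y,Z,\phi_kX)+d\omega_j(\phi_iY,\phi_iZ,\phi_kX)+d\omega_k(\phi_iY,Z,\phi_kX)+d\omega_k(Y,\phi_iZ,\phi_kX),
\]
which is exactly the block of $d\omega_j,d\omega_k$ terms on the right-hand side of \eqref{nablaphi}. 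Substituting this into the universal identity of the first step yields \eqref{nablaphi}.

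The main obstacle is the bookkeeping in the first step: unlike the almost Hermitian model, where $\phi^2=-I$ and the anti-invariance relation is clean, here one must carry the $\eta$- and $\nabla\xi$-corrections through the pairwise cancellation of the $\nabla\omega_i$-terms and verify, with the correct signs, that they collapse to $d\eta(\phi_iY,Z)\eta(X)-d\eta(\phi_iZ,Y)\eta(X)+d\eta(\phi_iY,X)\eta(Z)-d\eta(\phi_iZ,X)\eta(Y)$. In particular one should be attentive that the anti-invariance relation only controls the last two arguments of $\nabla\omega_i$, so the three summands of $d\omega_i(X,\phi_iY,\phi_iZ)$ obtained from the torsion-free formula for $d\omega_i$ must be treated separately. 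Everything else is formal and parallels the computations already carried out in the proof of Lemma \ref{SU2-nablaphi}.
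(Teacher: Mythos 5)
Your proposal follows essentially the same route as the paper: the ``universal identity'' you set out to derive in your first step is exactly the formula the paper quotes from \cite[Lemma 6.1]{BLAIR} (equation \eqref{nablaphi0}), and your second step --- applying \eqref{N} to the triple $(Y,Z,\phi_kX)$ and using $\phi_j\phi_kX=\phi_iX$ to convert $g(N_{\phi_i}(Y,Z),\phi_iX)$ into the $d\omega_j,d\omega_k$ block --- is verbatim the paper's argument. The only difference is that you re-derive Blair's formula from the torsion-free expression for $d\omega_i$ and the anti-invariance of $\nabla\omega_i$ rather than citing it; that derivation is standard and your sketch of it is sound, including the observation that $g([\phi_i,\phi_i](Y,Z),\phi_iX)=g(N_{\phi_i}(Y,Z),\phi_iX)$ because $\eta\circ\phi_i=0$.
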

\begin{proof}
The covariant derivative of $\phi_i$ is given by (see \cite[Lemma 6.1]{BLAIR}):
\begin{align}\label{nablaphi0}
2g((\nabla_X\phi_{i})Y,Z)&={}-d\omega_{i}(X,\phi_{i} Y,\phi_{i} Z)+d\omega_{i}(X,Y,Z)+g(N_{\phi_{i}}(Y,Z),\phi_{i}X)\nonumber\\
&\quad\,\,{}+d\eta(\phi_{i}Y,Z)\eta(X)-d\eta(\phi_{i}Z,Y)\eta(X)+d\eta(\phi_{i}Y,X)\eta(Z)-d\eta(\phi_{i}Z,X)\eta(Y).
\end{align}
Applying \eqref{N} to vector fields $Y,Z$ and $\phi_k X$, being $\phi_j\phi_k=\phi_i$, we have
\begin{align}\label{N1}
g(N_{\phi_{i}}(Y,Z),\phi_{i} X)&={}-d\omega_{j}(Y,Z,\phi_{k}X)+d\omega_{j}(\phi_{i}Y,\phi_{i} Z,\phi_{k}X)\\&\quad\,\,{}+d\omega_{k}(Y,\phi_{i}Z,\phi_{k}X)+d\omega_{k}(\phi_{i}Y,Z,\phi_{k}X).\nonumber
\end{align}
Combining \eqref{nablaphi0} and \eqref{N1}, we get the result.
\end{proof}

\begin{theorem}\label{main2}
Nearly Sasakian  structures on a $5$-dimensional manifold are in one-to-one correspondence with $SU(2)$-structures $(\eta,\omega_1,\omega_2,\omega_3)$ satisfying
\begin{equation}\label{mainSU}
d\eta=-2\omega_3+2\lambda\omega_1,\qquad d\omega_1=3\eta\wedge\omega_2,\qquad d\omega_2=-3\eta\wedge\omega_1-3\lambda\eta\wedge\omega_3
\end{equation}
for some real number $\lambda\ne0$. These $SU(2)$-structures are nearly hypo.
\end{theorem}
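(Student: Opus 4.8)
The plan is to set up two mutually inverse assignments and to verify the stated identities on each side. First, let $(\phi,\xi,\eta,g)$ be a non-Sasakian nearly Sasakian structure on $M^{5}$. By Theorem~\ref{main1}, in dimension $5$ and for a non-Sasakian structure one has $\mathrm{Spec}(h^{2})=\{0,-\lambda^{2}\}$ with $0$ of multiplicity one, hence $h^{2}=-\lambda^{2}(I-\eta\otimes\xi)$ for a nonzero constant $\lambda$. Put
\[
\phi_{3}:=\phi,\qquad \phi_{1}:=\tfrac{1}{\lambda}h,\qquad \phi_{2}:=\phi_{3}\phi_{1}=\tfrac{1}{\lambda}\phi h.
\]
Using $h\xi=0$, $\eta\circ h=0$, the skew-symmetry of $h$ and $\phi h=-h\phi$, one checks that each $(\phi_{i},\xi,\eta,g)$ is an almost contact metric structure and that $\phi_{i}\phi_{j}=\phi_{k}=-\phi_{j}\phi_{i}$ for every even permutation $(i,j,k)$ of $(1,2,3)$; hence (cf.\ \cite{CS,BV}) the quadruplet $(\eta,\omega_{1},\omega_{2},\omega_{3})$ with $\omega_{i}:=g(\phi_{i}\,\cdot\,,\,\cdot\,)$ is an $SU(2)$-structure. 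The equation $d\eta=-2\omega_{3}+2\lambda\omega_{1}$ is immediate from \eqref{nablaxi}.

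The heart of this first direction is to determine the covariant derivatives of $\phi_{1},\phi_{2},\phi_{3}$. I would first prove that, for $X,Y\in\ker\eta$,
\[
(\nabla_{X}\phi_{3})Y=\bigl(g(X,Y)+\lambda\omega_{2}(X,Y)\bigr)\xi:
\]
by \eqref{varie5} the vector $(\nabla_{X}\phi_{3})Y$ is $g$-orthogonal to $h(\ker\eta)=\ker\eta$, hence a multiple of $\xi$, and pairing with $\xi$ and using \eqref{nablaxi} gives the coefficient. Next, \eqref{Rxi} together with $h^{2}=-\lambda^{2}(I-\eta\otimes\xi)$ yields $(\nabla_{X}h)Y=(\nabla_{X}\phi)Y-(1+\lambda^{2})\bigl(g(X,Y)\xi-\eta(Y)X\bigr)$, so $\nabla\phi_{1}$ is controlled by $\nabla\phi_{3}$; combining with \eqref{nablaxiphi} and \eqref{nablaxih} one gets $\nabla_{\xi}\phi_{1}=\phi_{2}$, $\nabla_{\xi}\phi_{3}=\lambda\phi_{2}$, $\nabla_{\xi}\phi_{2}=-\phi_{1}-\lambda\phi_{3}$ and, for $X,Y\in\ker\eta$,
\[
(\nabla_{X}\phi_{1})Y=\bigl(\omega_{2}(X,Y)-\lambda g(X,Y)\bigr)\xi,\qquad
(\nabla_{X}\phi_{2})Y=-\bigl(\omega_{1}(X,Y)+\lambda\omega_{3}(X,Y)\bigr)\xi,
\]
using $\phi_{1}Y\in\ker\eta$ and $\phi_{3}\xi=0$. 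Substituting these into the identity $d\omega_{i}(X,Y,Z)=g((\nabla_{X}\phi_{i})Y,Z)-g((\nabla_{Y}\phi_{i})X,Z)+g((\nabla_{Z}\phi_{i})X,Y)$ and splitting the arguments according to $TM=[\xi]\oplus\ker\eta$ produces $d\omega_{3}=3\lambda\,\eta\wedge\omega_{2}$, $d\omega_{1}=3\eta\wedge\omega_{2}$ and $d\omega_{2}=-3\eta\wedge\omega_{1}-3\lambda\,\eta\wedge\omega_{3}$, i.e.\ \eqref{mainSU}. The nearly hypo property is then read off: $d\omega_{1}=3\eta\wedge\omega_{2}$ is one of its two defining equations, while $d(\eta\wedge\omega_{3})=d\eta\wedge\omega_{3}-\eta\wedge d\omega_{3}=d\eta\wedge\omega_{3}=-2\,\omega_{1}\wedge\omega_{1}$ by \eqref{SU2} (note $\eta\wedge d\omega_{3}=0$).

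For the converse, let $(\eta,\omega_{1},\omega_{2},\omega_{3})$ be an $SU(2)$-structure satisfying \eqref{mainSU}, with associated tensors $\phi_{i},\xi,g$; from $d^{2}\eta=0$ and the first two equations one obtains $d\omega_{3}=3\lambda\,\eta\wedge\omega_{2}$. I would substitute $d\eta,d\omega_{1},d\omega_{2},d\omega_{3}$ into the general identity \eqref{nablaphi} for $2g((\nabla_{X}\phi_{3})Y,Z)$ (taking the even permutation $(i,j,k)=(3,1,2)$), expand each $\eta\wedge\omega_{k}$ as $\eta(A)\omega_{k}(B,C)+\eta(B)\omega_{k}(C,A)+\eta(C)\omega_{k}(A,B)$, and simplify using the quaternionic relations \eqref{quaternionic} and their consequences (such as $\omega_{j}(\phi_{i}\,\cdot\,,\phi_{i}\,\cdot\,)=\pm\omega_{j}$ and $\omega_{i}(\phi_{j}\,\cdot\,,\,\cdot\,)=\pm\omega_{k}$ for $i\neq j$). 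The expression should collapse to
\[
2g((\nabla_{X}\phi_{3})Y,Z)=2\lambda(\eta\wedge\omega_{2})(X,Y,Z)-2\eta(Y)g(X,Z)+2\eta(Z)g(X,Y),
\]
which is precisely \eqref{dPhi} with $d\Phi=-d\omega_{3}=-3\lambda\,\eta\wedge\omega_{2}$; symmetrizing in $X,Y$ gives \eqref{main}, so $(\phi_{3},\xi,\eta,g)$ is nearly Sasakian, and it is non-Sasakian because $\nabla_{X}\xi=(-\phi_{3}+\lambda\phi_{1})X$ then forces its operator $h=\lambda\phi_{1}$ to be nonzero. Since $\phi_{3},\xi,\eta,g$ are unchanged under passing back and forth, the two assignments are mutually inverse (once the sign of $\lambda$ is fixed, say $\lambda>0$), which establishes the bijection.

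The main obstacle is the bookkeeping in the converse direction: \eqref{nablaphi} carries of the order of ten $d$-form terms, each of which splits into three summands once the wedges $\eta\wedge\omega_{k}$ are opened, and collapsing the whole expression to the nearly Sasakian form relies on a systematic use of the quaternionic identities together with the behaviour of the $\omega_{i}$ under the $\phi_{j}$. In the direct direction the only genuine input is the identification of $(\nabla_{X}\phi)Y$ on $\ker\eta$ via \eqref{varie5}; the rest is careful but mechanical substitution.
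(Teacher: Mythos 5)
Your proposal is correct and follows essentially the same route as the paper: the same tensors $\phi_1=\tfrac{1}{\lambda}h$, $\phi_2=\tfrac{1}{\lambda}\phi h$, $\phi_3=\phi$, the same use of \eqref{varie5}, \eqref{Rxi} and \eqref{nablaxih} to pin down the exterior derivatives in the forward direction, and the same substitution of \eqref{mainSU} into the general formula \eqref{nablaphi} for the converse. The only difference is cosmetic: you compute all three covariant derivatives $\nabla\phi_i$ explicitly and assemble the $d\omega_i$ from the cyclic sum, where the paper shortcuts two of the three equations via $d^2\eta=0$ (giving $d\omega_3=\lambda\,d\omega_1$) and $d^2\omega_1=0$ together with \eqref{SU2} (giving $\eta\wedge d\omega_2=0$).
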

\begin{proof}
Let $(M,\phi,\xi,\eta,g)$ be a nearly Sasakian $5$-manifold. The associated tensor $h$ satisfies
\begin{equation}\label{h-5dim}
h^2=-\lambda^2(I-\eta\otimes\xi),
\end{equation}
for some non-vanishing constant $\lambda$. Since $h$ is skew-symmetric, anticommutes with $\phi$ and satisfies $h\xi=0$, the structure tensors $\xi$, $\eta$ and $g$, together with the $(1,1)$-tensor fields
\begin{equation}\label{quaternionic1}
\phi_1:=\frac{1}{\lambda}h,\qquad \phi_2:=\frac{1}{\lambda}\phi h,\qquad \phi_3:=\phi,
\end{equation}
determine an $SU(2)$-reduction of the frame bundle over $M$. Taking the $2$-forms $\omega_i$, $i\in\{1,2,3\}$, defined by
$\omega_i(X,Y):=g(\phi_i X,Y)$,
we prove that the structure $(\eta,\omega_1,\omega_2,\omega_3)$ satisfies \eqref{mainSU}. Using \eqref{nablaxi}, we compute
\begin{align*}
d\eta(X,Y)&= X(\eta(Y))-Y(\eta(X))-\eta([X,Y])\\
&=g(Y,\nabla_X\xi)-g(X,\nabla_Y\xi)\\
&=2 g(-\phi X+hX,Y)\\
&= -2\omega_3(X,Y)+2\lambda\omega_1(X,Y),
\end{align*}
which proves the first equation in \eqref{mainSU}. In particular we have $d\omega_3=\lambda d\omega_1$. Now, by \eqref{dPhi}
\[
d\omega_3(X,Y,Z)= 3g((\nabla_X\phi)Y,Z)+3\eta(Y)g(X,Z)-3\eta(Z)g(X,Y).
\]
For $X=\xi$, applying \eqref{nablaxiphi}, we get
\[d\omega_3(\xi,Y,Z)=3g((\nabla_\xi\phi)Y,Z)=3g(\phi hY,Z)=3\lambda\omega_2(Y,Z).\]
On the other hand, equation \eqref{varie5} implies that for every vector fields $X,Y,Z$ orthogonal to $\xi$, $g((\nabla_X\phi)Y,Z)=0$ and thus $d\omega_3(X,Y,Z)=0$. Therefore
$d\omega_3=3\lambda\eta\wedge\omega_2$. Being also $d\omega_3=\lambda d\omega_1$, we obtain the second equation in \eqref{mainSU}. \ Now, using the first two equations in \eqref{mainSU}, and \eqref{SU2}, we have $\eta\wedge d\omega_2=d\eta\wedge \omega_2=0$,
and thus, for every vector fields $X,Y,Z$ orthogonal to $\xi$,
\[d\omega_2(X,Y,Z)=(\eta\wedge d\omega_2)(\xi,X,Y,Z)=0.\]
From \eqref{nablaxih}, we get $\nabla_\xi(\phi h)=-\lambda^2\phi-h$.
Hence, for every vector fields $Y$, $Z$, using also \eqref{nablaxi}, we compute
\begin{align*}
\lambda d\omega_2(\xi,Y,Z)&=g((\nabla_\xi\phi h)Y,Z)+g((\nabla_Y\phi h)Z,\xi)+g((\nabla_Z\phi h)\xi,Y)\\
&=-3g(hY+\lambda^2\phi Y,Z)\\
&=-3\lambda \omega_1(Y,Z)-3\lambda^2\omega_3(Y,Z),
\end{align*}
and this completes the proof of the third equation in \eqref{mainSU}.

As for the converse, assume that $M$ is a $5$-manifold with an $SU(2)$-structure satisfying \eqref{mainSU} for some non-vanishing real number $\lambda$. Consider the associated almost contact metric structures $(\phi_i,\xi,\eta,g)$, $i\in\{1,2,3\}$.
Applying \eqref{nablaphi} and \eqref{mainSU} we compute the covariant derivative of $\phi_3$:
\begin{align*}
2g((\nabla_X\phi_3)Y,Z)&=-3\lambda\eta(X)\omega_2(\phi_3 Y,\phi_3 Z)+3\lambda\eta(X)\omega_2(Y,Z)
+3\lambda\eta(Y)\omega_2(Z,X)  \\
&\quad+3\lambda\eta(Z)\omega_2(X,Y)-3\eta(Y)\omega_2(Z,\phi_2X)-3\eta(Z)\omega_2(\phi_2X,Y)  \\
&\quad-3\eta(Y)\omega_1(\phi_3Z,\phi_2X)-3\lambda\eta(Y)\omega_3(\phi_3Z,\phi_2X)-3\eta(Z)\omega_1(\phi_2X,\phi_3Y)  \\
&\quad-3\lambda\eta(Z)\omega_3(\phi_2X,\phi_3Y)-2\omega_3(\phi_3Y,Z)\eta(X)+2\lambda\omega_1(\phi_3Y,Z)\eta(X)  \\
&\quad+2\omega_3(\phi_3Z,Y)\eta(X)-2\lambda\omega_1(\phi_3Z,Y)\eta(X)-2\omega_3(\phi_3Y,X)\eta(Z)  \\
&\quad+2\lambda\omega_1(\phi_3Y,X)\eta(Z)+2\omega_3(\phi_3Z,X)\eta(Y)-2\lambda\omega_1(\phi_3Z,X)\eta(Y)  \\
&=\eta(X)\{3\lambda g(\phi_2 Y,Z)+3\lambda g(\phi_2 Y,Z)-2g(\phi_3^2Y,Z)-2\lambda g(\phi_2Y,Z)+2g(\phi_3^2Z,Y)  \\
&\quad+2\lambda g(\phi_2Z,Y)\}+\eta(Y)\{3\lambda g(\phi_2 Z,X)-3 g(\phi_2Z,\phi_2X)+3 g(\phi_2Z,\phi_2X)  \\
&\quad +3\lambda g(Z,\phi_2 X)+2 g(\phi_3^2Z,X)+2\lambda g(\phi_2Z,X)\}+\eta(Z)\{3\lambda g(\phi_2 X,Y)  \\
&\quad-3g(\phi_2^2 X,Y)-3g(\phi_3 X,\phi_3Y)-3\lambda g(\phi_2X,Y)-2g(\phi_3^2Y,X)-2\lambda g(\phi_2 Y,X)\}  \\
&=2\lambda\eta(X)\omega_2(Y,Z)+2\lambda\eta(Y)\omega_2(Z,X)+2\lambda\eta(Z)\omega_2(X,Y)-2\eta(Y)g(X,Z)\\
&\quad+2\eta(Z)g(X,Y)
\end{align*}
\begin{align*}
&=2\lambda (\eta\wedge\omega_2)(X,Y,Z)-2\eta(Y)g(X,Z)+2\eta(Z)g(X,Y)  \\
&=\frac{2}{3}d\omega_3(X,Y,Z)-2\eta(Y)g(X,Z)+2\eta(Z)g(X,Y) \\
&=-\frac{2}{3}d\Phi(X,Y,Z)-2\eta(Y)g(X,Z)+2\eta(Z)g(X,Y)
\end{align*}
thus proving \eqref{dPhi}, so that $(\phi_3,\xi,\eta,g)$ is a nearly Sasakian structure. Now, considering the structure tensor field $h=\nabla\xi+\phi_3$, we prove that $h=\lambda\phi_1$.
Indeed, by \eqref{nablaxiphi}, $\nabla_\xi\phi_3=\phi_3h$. On the other hand, using \eqref{dPhi} and \eqref{mainSU}, we have
\[g((\nabla_\xi\phi_3)Y,Z)=\frac{1}{3}d\omega_3(\xi,Y,Z)=\lambda(\eta\wedge\omega_2)(\xi,Y,Z)=\lambda g(\phi_2Y,Z).\]
Therefore, $\nabla_\xi\phi_3=\lambda \phi_2=\phi_3h$, which implies that $h=-\lambda\phi_3\phi_2=\lambda\phi_1$.

Finally, from \eqref{mainSU} one gets $d(\eta\wedge\omega_3)=-2\omega_1\wedge\omega_1$, so that the $SU(2)$-structure $(\eta,\omega_1,\omega_2,\omega_3)$ is nearly hypo.
\end{proof}

\begin{remark}
\emph{In \cite{BV} the authors determine explicit formulas for the scalar curvature and the Ricci tensor of the metric induced by an $SU(2)$-structure $(\eta,\omega_1,\omega_2,\omega_3)$ on a $5$-manifold in terms of the intrinsic torsion. For an $SU(2)$-structure satisfying \eqref{mainSU}, the only non-vanishing torsion forms are $\phi_1=2\lambda$, $\phi_3=-2$, $f_{12}=3$ and $f_{23}=-3\lambda$. Therefore, from (3.2) and Theorem 3.8 in \cite{BV}, it follows that $\mathrm{Ric}=4(1+\lambda^2)g$. We thus reacquire the result of Olszak (\cite{Olszak1}) stating that each $5$-dimensional nearly Sasakian  manifold is Einstein and of scalar curvature $s>20$. In particular,  \begin{equation}\label{scalar_S}
s=20(1+\lambda^2)
\end{equation}
implying that the constant $\lambda$ in \eqref{mainSU} is determined by the Riemannian geometry of the manifold.}
\end{remark}

Thus to any $5$-dimensional nearly Sasakian  manifold $(M,\phi,\xi,\eta,g)$ there are attached two other almost contact metric structures $(\phi_{1},\xi,\eta,g)$ and $(\phi_{2},\xi,\eta,g)$, with the same metric and characteristic vector field of $(\phi,\xi,\eta,g)$, such that the quaternionic relations \eqref{quaternionic} hold. In the following  we investigate the class to which these two supplementary almost contact metric structures belong.

To begin with, we recall a slight generalization of nearly Sasakian manifolds. Namely, a \emph{nearly $\alpha$-Sasakian manifold} is an almost contact metric manifold $(M,\phi,\xi,\eta,g)$ satisfying the following relation
\begin{equation*}
(\nabla_X\phi)Y+(\nabla_Y\phi)X=\alpha\left(2g(X,Y)\xi-\eta(X)Y-\eta(Y)X\right)
\end{equation*}
for some real number $\alpha\neq 0$.

\begin{lemma}\label{lemmanablaphi}
Let $(M,\phi,\xi,\eta,g)$ be a $5$-dimensional nearly Sasakian manifold. Then for all vector fields $X$, $Y$ on $M$ one has
\begin{gather}
(\nabla_{X}\phi)Y = \eta(X) \phi h Y - \eta(Y) (X + \phi h X) + g(X+\phi h X, Y)\xi,\label{nablaphi-completo}\\
(\nabla_{X}h)Y = \eta(X)\phi h Y - \eta(Y) (h^2 X + \phi h X) + g(h^{2}X + \phi h X,Y)\xi, \label{nablah-completo}\\
(\nabla_{X}\phi h)Y = g(\phi h^2 X - hX,Y)\xi + \eta(X)(\phi h^2 Y - hY) - \eta(Y) (\phi h^2 X - hX). \label{nablaphih-completo}
\end{gather}
\end{lemma}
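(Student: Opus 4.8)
The plan is to derive all three identities in dimension $5$ from the already-available pointwise relations, using the algebraic fact that on a $5$-dimensional nearly Sasakian manifold $h^2=-\lambda^2(I-\eta\otimes\xi)$. First I would establish \eqref{nablaphi-completo}. The symmetric part of $\nabla\phi$ is governed by \eqref{main}, which already gives $(\nabla_X\phi)Y+(\nabla_Y\phi)X=2g(X,Y)\xi-\eta(X)Y-\eta(Y)X$; so it suffices to pin down the skew-symmetric part $(\nabla_X\phi)Y-(\nabla_Y\phi)X$. The natural tool is \eqref{varie5}, which controls $g((\nabla_X\phi)Y,hZ)$. Since in dimension $5$ the operator $h$ is, up to the factor $\lambda$, an almost complex structure on $\ker\eta$, composing with $h$ is (essentially) invertible on $\ker\eta$; concretely, replacing $Z$ by $hZ$ in \eqref{varie5} and using $h^2=-\lambda^2(I-\eta\otimes\xi)$ recovers $g((\nabla_X\phi)Y,Z)$ for $Z\perp\xi$. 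Together with the already recorded facts $(\nabla_\xi\phi)Y=\phi hY$ (equation \eqref{nablaxiphi}) and $g((\nabla_X\phi)Y,Z)=0$ for $X,Y,Z\perp\xi$ (a consequence of \eqref{varie5}), and with $(\nabla_X\phi)\xi=-(\nabla_X\phi^2)\text{-type}$ bookkeeping giving $g((\nabla_X\phi)Y,\xi)=-g(Y,(\nabla_X\phi)\xi)$ and $(\nabla_X\phi)\xi = -\phi\nabla_X\xi = -\phi(-\phi X+hX) = -X+\eta(X)\xi-\phi h X$, one assembles the right-hand side of \eqref{nablaphi-completo} by checking it against all the cases $X=\xi$ or $X\perp\xi$, $Y=\xi$ or $Y\perp\xi$. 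This is the step I expect to be the main obstacle: not because any single case is hard, but because one must be careful that the candidate tensor on the right of \eqref{nablaphi-completo} simultaneously has the correct symmetric part (matching \eqref{main}) and the correct contractions with $\xi$ and with $h$, and the inversion-of-$h$ argument must be phrased cleanly using $h^2=-\lambda^2\phi^2$.

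Once \eqref{nablaphi-completo} is in hand, \eqref{nablah-completo} follows almost immediately. From $\nabla_X\xi=-\phi X+hX$ (equation \eqref{nablaxi}) we get $\nabla_X h = \nabla_X(\nabla\xi) + \nabla_X\phi$ in the appropriate tensorial sense; more precisely, differentiating \eqref{nablaxi} and using \eqref{Rxi} (which gives $(\nabla_Xh)Y = (\nabla_X\phi)Y - R(\xi,X)Y$, and $R(\xi,X)Y = g(X-h^2X,Y)\xi-\eta(Y)(X-h^2X)$), one obtains
\[
(\nabla_Xh)Y = (\nabla_X\phi)Y - g(X-h^2X,Y)\xi + \eta(Y)(X-h^2X).
\]
Substituting \eqref{nablaphi-completo} and simplifying with $h^2=-\lambda^2(I-\eta\otimes\xi)$ yields exactly \eqref{nablah-completo}; the terms $\pm X$ and $\pm\eta(Y)X$ cancel and one is left with the stated expression in $h^2X$ and $\phi hX$. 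Alternatively, \eqref{varie7} combined with $h^2=-\lambda^2\phi^2$ gives $(\nabla_Xh^2) = -\lambda^2(\nabla_X\phi^2)$, which is another, purely algebraic, route to the same formula and can serve as a consistency check.

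Finally, \eqref{nablaphih-completo} is obtained by the Leibniz rule: $(\nabla_X\phi h)Y = (\nabla_X\phi)(hY) + \phi((\nabla_Xh)Y)$. I would plug in \eqref{nablaphi-completo} evaluated at $hY$ (noting $\eta(hY)=0$ and $g(X+\phi hX, hY) = g(hX,Y)\cdot(\pm1)$-type terms after using skew-symmetry of $h$ and the anticommutation $h\phi=-\phi h$ together with $\phi h^2 = -\lambda^2\phi\phi^2 = \lambda^2(\phi-\eta\otimes\phi\xi)$-style identities) and \eqref{nablah-completo}, then apply $\phi$ and simplify. The main points to watch are: $\phi\xi=0$ kills several terms, $\phi h^2 Y = h^2\phi Y = -\lambda^2\phi Y$ on $\ker\eta$, and $\phi(\phi hX) = \phi^2 hX = -hX$ since $\eta(hX)=0$. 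After collecting terms the result reduces to the right-hand side of \eqref{nablaphih-completo}. Throughout, all manipulations are pointwise and use only \eqref{main}, \eqref{nablaxi}, \eqref{nablaxiphi}, \eqref{Rxi}, \eqref{varie5}, \eqref{varie7}, the skew-symmetry of $h$, the anticommutation $\phi h=-h\phi$, and the defining five-dimensional identity $h^2=-\lambda^2(I-\eta\otimes\xi)$.
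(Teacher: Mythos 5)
Your proof is correct, and for the second and third identities it coincides with the paper's: \eqref{nablah-completo} is obtained by subtracting the Olszak curvature identity \eqref{Rxi} from \eqref{nablaphi-completo}, and \eqref{nablaphih-completo} by the Leibniz rule $(\nabla_X\phi h)Y=(\nabla_X\phi)hY+\phi(\nabla_Xh)Y$. The difference is in how \eqref{nablaphi-completo} is established. The paper deduces it from the $SU(2)$-structure equations \eqref{mainSU} of Theorem \ref{main2}, writing $d\Phi=-d\omega_3=-3\lambda\,\eta\wedge\omega_2$ and substituting into \eqref{dPhi}. You instead work directly with Olszak's identity \eqref{varie5}: replacing $Z$ by $hZ$ there and using \eqref{h-5dim} determines the component of $(\nabla_X\phi)Y$ orthogonal to $\xi$, while the $\xi$-component follows from $g((\nabla_X\phi)Y,\xi)=-g(Y,(\nabla_X\phi)\xi)$ with $(\nabla_X\phi)\xi=-\phi\nabla_X\xi=-X+\eta(X)\xi-\phi hX$. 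The two routes rest on the same underlying facts (the paper's derivation of $d\omega_3=3\lambda\,\eta\wedge\omega_2$ itself uses \eqref{varie5}), but yours bypasses the differential-form formalism and needs only the Section~2 preliminaries plus \eqref{h-5dim}, which is a mild gain in self-containedness. Two small remarks: the case analysis in $X,Y\in\{\xi\}\cup\xi^{\perp}$ that you announce is unnecessary, since the single substitution $Z\mapsto hZ$ combined with the skew-symmetry of $\phi h$ already produces the full tensor identity in one stroke; and the consistency with the symmetric part \eqref{main} is automatic once the formula is derived, so it need not be checked separately.
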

\begin{proof}
The first equation follows by a direct computation using \eqref{dPhi}, \eqref{mainSU} and \eqref{quaternionic1}. Combining \eqref{Rxi} and \eqref{nablaphi-completo} one easily obtains \eqref{nablah-completo}. Finally, equations \eqref{nablaphi-completo} and \eqref{nablah-completo} imply \eqref{nablaphih-completo}.
\end{proof}

Now, from \eqref{nablah-completo} and \eqref{nablaphih-completo} it follows that
\begin{gather}
(\nabla_{X}h)Y + (\nabla_{Y}h)X = -\lambda^{2}\left(2g(X,Y)\xi - \eta(X)Y - \eta(Y)X\right) \label{nearly h}\\
(\nabla_{X}\phi h)Y + (\nabla_{Y}\phi h)X = 0. \label{nearly phih}
\end{gather}

Thus we can state the following result.

\begin{theorem}\label{main3}
Let  $(M,\phi,\xi,\eta,g)$  be  a  $5$-dimensional  nearly  Sasakian  manifold  and   let $(\phi_{i},\xi,\eta,g)$, $i\in\left\{1,2,3\right\}$, be the almost contact metric structures defined by the associated $SU(2)$-structure. Then $(\phi_{2},\xi,\eta,g)$ is nearly cosymplectic and $(\phi_{1},\xi,\eta,g)$ is nearly $\alpha$-Sasakian with $\alpha=-\lambda$.
\end{theorem}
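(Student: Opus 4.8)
The plan is to read the statement off directly from the material already assembled, namely Lemma~\ref{lemmanablaphi} together with its consequences \eqref{nearly h} and \eqref{nearly phih}, using only the definitions in \eqref{quaternionic1}. Recall that the two supplementary endomorphisms are $\phi_1 = \frac{1}{\lambda}h$ and $\phi_2 = \frac{1}{\lambda}\phi h$, and that $(\phi_1,\xi,\eta,g)$ and $(\phi_2,\xi,\eta,g)$ are honest almost contact metric structures, being two of the three structures comprising the $SU(2)$-structure attached to $(M,\phi,\xi,\eta,g)$ in Theorem~\ref{main2}; so for each the only thing left to verify is the defining differential identity of the relevant class.

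First I would divide \eqref{nearly h} by the nonzero constant $\lambda$. Since $(\nabla_X\phi_1)Y = \frac{1}{\lambda}(\nabla_X h)Y$, this gives
\[
(\nabla_X\phi_1)Y + (\nabla_Y\phi_1)X = -\lambda\bigl(2g(X,Y)\xi - \eta(X)Y - \eta(Y)X\bigr),
\]
which is precisely the nearly $\alpha$-Sasakian condition with $\alpha = -\lambda$; note $\alpha \ne 0$ because $\lambda \ne 0$. Likewise, dividing \eqref{nearly phih} by $\lambda$ and using $(\nabla_X\phi_2)Y = \frac{1}{\lambda}(\nabla_X\phi h)Y$ yields
\[
(\nabla_X\phi_2)Y + (\nabla_Y\phi_2)X = 0,
\]
so that $(\phi_2,\xi,\eta,g)$ is nearly cosymplectic.

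In this form there is essentially no obstacle left: all the real work has been pushed into Lemma~\ref{lemmanablaphi}, whose proof rests on \eqref{dPhi}, Olszak's curvature identity \eqref{Rxi}, the structure equations \eqref{mainSU}, and the $5$-dimensional relation $h^2 = -\lambda^2(I-\eta\otimes\xi)$. If one preferred an argument not routed through $h$, one could instead plug \eqref{mainSU} into the general formula \eqref{nablaphi} for the covariant derivatives of $\phi_1$ and $\phi_2$ — exactly as was done for $\phi_3$ in the proof of Theorem~\ref{main2} — and symmetrize in $X$ and $Y$; this is heavier bookkeeping but entirely routine. The one point worth flagging is the value of the constant: it comes out as $-\lambda$, the $-\lambda^2$ on the right of \eqref{nearly h} being divided by the factor $\lambda$ in the definition of $\phi_1$, and in particular is negative, consistent with $(\phi_1,\xi,\eta,g)$ being genuinely non-Sasakian.
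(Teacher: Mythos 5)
Your proof is correct and is essentially the paper's own argument: the paper states Theorem~\ref{main3} immediately after deriving \eqref{nearly h} and \eqref{nearly phih} from Lemma~\ref{lemmanablaphi}, and the only remaining step is exactly the division by $\lambda$ via $\phi_1=\frac{1}{\lambda}h$, $\phi_2=\frac{1}{\lambda}\phi h$ that you carry out. The only (immaterial) slip is your closing aside that $\alpha=-\lambda$ is negative: the sign of $\lambda$ is not fixed, so one can only say $\alpha\neq 0$.
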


We now find some applications of Theorem \ref{main2}, pointing out the relationship between nearly Sasakian geometry and Sasaki-Einstein manifolds.

\begin{corollary}\label{main5}
Each nearly Sasakian $5$-dimensional manifold carries a Sasaki-Einstein structure. \ Conversely, \ each Sasaki-Einstein $5$-manifold \ carries \ a \ $1$-parameter \ family \ of \ nearly \ Sasakian structures.
\end{corollary}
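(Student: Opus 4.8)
The plan is to exploit the $SU(2)$-reformulation of Theorem \ref{main2} and to produce the Sasaki–Einstein structure by a constant rescaling of the $SU(2)$-data. Suppose $(M,\phi,\xi,\eta,g)$ is a nearly Sasakian $5$-manifold, and let $(\eta,\omega_1,\omega_2,\omega_3)$ be the associated nearly hypo $SU(2)$-structure from Theorem \ref{main2}, so that \eqref{mainSU} holds for a nonzero constant $\lambda$. The key observation is that equations \eqref{mainSU} mix $\omega_1$ and $\omega_3$ (through $d\eta=-2\omega_3+2\lambda\omega_1$ and the $\lambda$-terms in $d\omega_2$), so one should rotate the pair $(\omega_1,\omega_3)$ in the $2$-plane they span and simultaneously rescale $\eta$. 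Concretely, I would set $\tilde\eta=a\eta$ and look for new $2$-forms
\begin{equation*}
\tilde\omega_1=\cos\theta\,\omega_1+\sin\theta\,\omega_3,\qquad \tilde\omega_2=b\,\omega_2,\qquad \tilde\omega_3=-\sin\theta\,\omega_1+\cos\theta\,\omega_3,
\end{equation*}
for constants $a,b>0$ and an angle $\theta$, chosen so that the new quadruple satisfies the Sasaki–Einstein system \eqref{S-E}. One checks that $(\tilde\eta,\tilde\omega_1,\tilde\omega_2,\tilde\omega_3)$ is again an $SU(2)$-structure (the relations \eqref{SU2}–\eqref{SU2b} are preserved by an orthogonal rotation of $(\omega_1,\omega_3)$ up to the volume rescaling, which fixes $a,b$ in terms of $\theta$), and then impose \eqref{S-E}. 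Computing $d\tilde\eta$, $d\tilde\omega_1$, $d\tilde\omega_2$ from \eqref{mainSU} gives three scalar conditions on $a,b,\theta$; the first forces $\tan\theta$ in terms of $\lambda$ (so that the $\omega_1$-component of $d\tilde\eta$ cancels), and the remaining two pin down $a$ and $b$. I expect the upshot to be $a=\frac{1}{\sqrt{1+\lambda^2}}$ (equivalently, using \eqref{scalar_S}, a normalization making the scalar curvature equal to $20$), with the contact form $\tilde\eta$ a constant multiple of $\eta$, as asserted. The resulting structure satisfies \eqref{S-E}, hence $(\tilde\phi_3,\xi,\tilde\eta,\tilde g)$ is Sasakian with Einstein metric, i.e. Sasaki–Einstein.

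For the converse, let $(\eta,\omega_1,\omega_2,\omega_3)$ be a Sasaki–Einstein $SU(2)$-structure satisfying \eqref{S-E}. Running the rotation in reverse, I would define for each $\lambda\neq 0$
\begin{equation*}
\hat\eta=c\,\eta,\qquad \hat\omega_1=\cos\theta\,\omega_1-\sin\theta\,\omega_3,\qquad \hat\omega_2=d\,\omega_2,\qquad \hat\omega_3=\sin\theta\,\omega_1+\cos\theta\,\omega_3,
\end{equation*}
with $\theta$, $c$, $d$ chosen (depending on $\lambda$) so that the resulting $SU(2)$-structure satisfies \eqref{mainSU} with parameter $\lambda$; again the first equation of \eqref{mainSU} fixes the angle via $\tan\theta=\lambda$ (up to sign), and the other two fix $c,d$. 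By the converse direction of Theorem \ref{main2}, this $SU(2)$-structure corresponds to a nearly Sasakian structure on $M$, and since $\lambda$ ranges over $\mathbb{R}\setminus\{0\}$ we obtain a $1$-parameter family. I would also note that different values of $\lambda$ genuinely give different structures, e.g. because the scalar curvature $20(1+\lambda^2)$ (by \eqref{scalar_S}) depends on $\lambda$, or because the tensor $h=\lambda\phi_1$ has norm depending on $\lambda$.

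The main obstacle is purely bookkeeping: verifying that the rotated-and-rescaled quadruple is still a bona fide $SU(2)$-structure (conditions \eqref{SU2}–\eqref{SU2b}), and then matching all three differential equations simultaneously with a single choice of the three constants $a,b,\theta$ (resp. $c,d,\theta$). This is an overdetermined-looking linear algebra problem in disguise, and the content is that the parameter count works out — three equations, three unknowns — precisely because the difference between the two systems \eqref{mainSU} and \eqref{S-E} is an $SO(2)$-rotation in the $(\omega_1,\omega_3)$-plane together with an overall scaling. Once the right $\theta$ is identified from the $d\eta$ equation, the rest is a consistency check; I would carry out that identification first and then confirm that the $d\omega_1$ and $d\omega_2$ equations are automatically compatible, invoking \eqref{scalar_S} to interpret the normalization constant geometrically.
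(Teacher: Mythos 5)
Your overall strategy -- rotate in the $(\omega_1,\omega_3)$-plane by an angle with $\tan\theta=\lambda$ and rescale -- is exactly the paper's: the deformation \eqref{Sasaki-E} used there is $\tilde\eta=\sqrt{1+\lambda^2}\,\eta$, $\tilde\omega_1=\sqrt{1+\lambda^2}\,(\omega_1+\lambda\omega_3)$, $\tilde\omega_2=(1+\lambda^2)\,\omega_2$, $\tilde\omega_3=\sqrt{1+\lambda^2}\,(\omega_3-\lambda\omega_1)$, which is precisely your rotation composed with the homothety $\tilde g=(1+\lambda^2)g$. However, your ansatz as written is under-parameterized and the system you propose to solve has \emph{no} solution. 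You rotate $(\omega_1,\omega_3)$ with no overall scale and put a scale $b$ only on $\omega_2$; but the compatibility condition \eqref{SU2} requires $\tilde\omega_1\wedge\tilde\omega_1=\tilde\omega_2\wedge\tilde\omega_2$, and a pure rotation gives $\tilde\omega_1\wedge\tilde\omega_1=v$ while $\tilde\omega_2\wedge\tilde\omega_2=b^2v$, forcing $b=1$. With $b=1$ the equation $d\tilde\eta=-2\tilde\omega_3$ forces $a=\cos\theta=1/\sqrt{1+\lambda^2}$, whereas $d\tilde\omega_1=3\tilde\eta\wedge\tilde\omega_2$ forces $a=\cos\theta+\lambda\sin\theta=\sqrt{1+\lambda^2}$ (using $d\omega_3=3\lambda\eta\wedge\omega_2$, which follows from \eqref{mainSU}); these are incompatible for $\lambda\neq0$. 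So your ``three equations, three unknowns'' count is off: \eqref{SU2} is a fourth constraint you did not budget for, and it eats the parameter $b$.

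The fix is small but essential: put the \emph{same} scale $c$ on all three $2$-forms (so \eqref{SU2} is automatic with $\tilde v=c^2v$), i.e.\ take $\tilde\omega_{1,3}=c(\cos\theta\,\omega_{1,3}\pm\sin\theta\,\omega_{3,1})$ and $\tilde\omega_2=c\,\omega_2$; then the three equations \eqref{S-E} determine $\tan\theta=\lambda$, $a=\sqrt{1+\lambda^2}$, $c=1+\lambda^2$, and the third equation is an automatic consistency check, as you anticipated. Note also that your expected value $a=1/\sqrt{1+\lambda^2}$ is inverted: since the nearly Sasakian metric has $s=20(1+\lambda^2)>20$ by \eqref{scalar_S}, the metric must be scaled \emph{up}, $\tilde g=(1+\lambda^2)g$, to bring the scalar curvature down to $20$, whence $\tilde\eta=\sqrt{1+\lambda^2}\,\eta$. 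The same correction is needed in your converse construction. Your observation that distinct $\lambda$ yield genuinely distinct nearly Sasakian structures (different scalar curvatures) is a nice addition not spelled out in the paper.
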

\begin{proof}
Let $M$ be a $5$-dimensional manifold.
Let $(\eta,\omega_1,\omega_2,\omega_3)$ be a nearly Sasakian $SU(2)$-structure on $M$, i.e. $(\eta,\omega_1,\omega_2,\omega_3)$ is an $SU(2)$-structure satisfying \eqref{mainSU} for some real number $\lambda\ne0$.
Put
\begin{align}\label{Sasaki-E}
\tilde\eta &:=\sqrt{1+\lambda^2}\,\eta,\\
\tilde\omega_1&:=\sqrt{1+\lambda^2}\,(\omega_1+\lambda\omega_3),\nonumber\\
\tilde\omega_2&:=(1+\lambda^2)\,\omega_2,\nonumber\\
\tilde\omega_3&:=\sqrt{1+\lambda^2}\,(\omega_3-\lambda\omega_1).\nonumber
\end{align}
One can easily check that $\tilde\omega_i\wedge\tilde\omega_j=\delta_{ij}\tilde v$, where $\tilde v=(1+\lambda^2)^2\omega_i\wedge\omega_i$, and $\tilde\eta\wedge\tilde v\ne0$. Furthermore, suppose that $X\lrcorner\,\tilde\omega_1=Y\lrcorner\,\tilde\omega_2$ for some vector fields $X,Y$. Let $\{(\phi_i,\xi,\eta,g)\}_{i\in\{1,2,3\}}$ be the almost contact metric structures associated to $(\eta,\omega_i)$. Then
\[\phi_1X+\lambda \phi_3X=\sqrt{1+\lambda^2}\,\phi_2Y\]
and applying $\phi_2$, we have $-\phi_3X+\lambda \phi_1X=\sqrt{1+\lambda^2}\,(-Y+\eta(Y)\xi)$.
Then,
\begin{align*}
\tilde\omega_3(X,Y)&=\sqrt{1+\lambda^2}\,g(\phi_3X-\lambda \phi_1X,Y)\\&=(1+\lambda^2)(g(Y,Y)-\eta(Y)^2)\\
&=(1+\lambda^2)g(\phi_iY,\phi_iY)\geq0.
\end{align*}
It is straightforward to verify that the $SU(2)$-structure $(\tilde\eta,\tilde\omega_1,\tilde\omega_2,\tilde\omega_3)$
 satisfies \eqref{S-E} and thus it is a Sasaki-Einstein structure.

Analogously, given a Sasaki-Einstein structure $(\tilde\eta,\tilde\omega_1,\tilde\omega_2,\tilde\omega_3)$ on $M$, for any real number $\lambda\ne 0$, one can define the nearly Sasakian structure
\begin{align}\label{Sasaki-Enearly}
\eta &:=\frac{1}{\sqrt{1+\lambda^2}}\,\tilde\eta,\\
\omega_1&:=\frac{1}{\sqrt{1+\lambda^2}(1+\lambda^2)}\,(\tilde\omega_1-\lambda\tilde\omega_3),\nonumber\\
\omega_2&:=\frac{1}{1+\lambda^2}\,\tilde\omega_2,\nonumber\\
\omega_3&:=\frac{1}{\sqrt{1+\lambda^2}(1+\lambda^2)}\,(\lambda\tilde\omega_1+\tilde\omega_3).\nonumber
\end{align}
\end{proof}

Corollary \ref{main5} provides a way of finding new examples of nearly-Sasakian manifolds. In particular, each Sasaki-Einstein metric of the infinite family of Sasakian structures on $S^{2}\times S^{3}$   recently discovered in \cite{MarSpa} gives examples of nearly Sasakian structures.
\medskip

We point out that, in terms of almost contact metric structures, the Sasaki-Einstein structure $(\tilde\phi,\tilde\xi,\tilde\eta,\tilde g)$ associated to any $5$-dimensional nearly Sasakian manifold $(M,\phi,\xi,\eta,g)$  is given by
\begin{equation}\label{deformation}
\tilde\phi=\frac{1}{\sqrt{1+\lambda^2}}\,(\phi-h),\quad \tilde\xi=\frac{1}{\sqrt{1+\lambda^2}}\,\xi,\quad \tilde\eta=\sqrt{1+\lambda^2}\,\eta,\quad\tilde g=(1+\lambda^2)g.
\end{equation}
The scalar curvatures $s$ and $\tilde s$ of $g$ and $\tilde g$, respectively, are related by $s=(1+\lambda^2)\tilde s$, coherently with \eqref{scalar_S}, since the scalar curvature of a $5$-dimensional Sasaki-Einstein structure is $\tilde s =20$.

\begin{remark}
\emph{One can find a more direct proof that the structure $(\tilde\phi,\tilde\xi,\tilde\eta,\tilde g)$ in \eqref{deformation} is Sasakian. Indeed,}
\[
d\eta(X,Y)= g(Y,\nabla_X\xi)-g(X,\nabla_Y\xi)=2 g(X,(\phi-h)Y)
\]
\emph{and thus} $d\tilde\eta(X,Y)=2\tilde g(X,\tilde\phi Y)$,
\emph{implying that $(\tilde\phi,\tilde\xi,\tilde\eta,\tilde g)$ is a contact metric structure. Applying \eqref{RXYxi}, a straightforward computation yields
\[\tilde R(X,Y)\tilde\xi=R(X,Y)\tilde\xi=\tilde\eta(Y)X-\tilde\eta(X)Y,\]
which ensures that the structure is Sasakian (\cite[Proposition 7.6]{BLAIR}).}
\end{remark}

\begin{remark}\label{main4}
\emph{Explicitly, the almost contact metric structures $(\tilde\phi_{i},\tilde\xi,\tilde\eta,\tilde{g})$ associated to the Sasaki-Einstein $SU(2)$-structure   \eqref{Sasaki-E} is given by}
\begin{align*}
\tilde\phi_{1}&:=\frac{1}{\sqrt{1+\lambda^{2}}}\left(\frac{1}{\lambda}h + \lambda \phi\right) = { \frac{1}{3\lambda\sqrt{1+\lambda^{2}}} } {\mathcal L}_{\xi}\phi h\,, \\
\tilde\phi_{2}&:=\frac{1}{\lambda}\phi h = \frac{1}{3\lambda} {\mathcal L}_{\xi}\phi\,,\\
\tilde\phi_{3}&:=\frac{1}{\sqrt{1+\lambda^{2}}}\left(\phi - h \right).
\end{align*}
\emph{Using Lemma \ref{lemmanablaphi}  one can prove that $(\tilde\phi_{1},\tilde\xi,\tilde\eta,\tilde{g})$ and $(\tilde\phi_{2},\tilde\xi,\tilde\eta,\tilde{g})$ are nearly cosymplectic. Actually we will see in Corollary \ref{SE-nearly} that this result holds for \emph{any} Sasaki-Einstein $SU(2)$-structure.}
\end{remark}

\begin{remark}
\emph{The  Sasaki-Einstein structure \eqref{Sasaki-E} defined on the nearly Sasakian manifold $M$ determines an integrable $SU(3)$-structure on $M\times\mathbb{R}_+$ which is given by the closed forms (see \cite{CS})}
\begin{align*}
F&= \sqrt{1+\lambda^2}\left\{t^2\omega_3+t\eta\wedge dt-\lambda t^2\omega_1\right\},\\
 \Psi_+&=(1+\lambda^2)\left\{t^2(t\omega_1\wedge\eta-\omega_2\wedge dt)+\lambda t^3\omega_3\wedge\eta\right\},\\
 \Psi_-&=\sqrt{1+\lambda^2}\left\{t^2(t\omega_2\wedge \eta+\omega_1\wedge dt)+\lambda t^2\omega_3\wedge dt+\lambda^2 t^3\omega_2\wedge\eta\right\}.
\end{align*}
\emph{In particular, the K\"ahler and Ricci-flat structure $(G,J)$ of the metric cone is given by}
\begin{align*}
G&=dt^2+(1+\lambda^2)t^2g,\\
JX&=\frac{1}{\sqrt{1+\lambda^2}}\,(\phi X-hX)+\sqrt{1+\lambda^2}\,\eta(X)\Upsilon,\\
J\Upsilon&=-\frac{1}{\sqrt{1+\lambda^2}}\,\xi,\quad \Upsilon=t\frac{\partial}{\partial t}\,.
\end{align*}

\emph{On the other hand, following \cite[Theorem 3.7 and Corollary 3.8]{FISU},  one can define on the product $M\times[0,\pi]$ an $SU(3)$-structure which is nearly K\"ahler for $0<t<\pi$:}
\begin{align*}
F&= \sqrt{1+\lambda^2}\left\{\sin^2t(\sin t\,\omega_1+\cos t\,\omega_3)+\sin t\,\eta\wedge dt+\lambda\sin^2 t(\sin t\,\omega_3-\cos t\,\omega_1)\right\},\\
 \Psi_+&=\sqrt{1+\lambda^2}\left\{\sin^3t\,\eta\wedge\omega_2+\sin^2t(\cos t\,\omega_1-\sin t\,\omega_3)\wedge dt\right.\\
 &\quad\quad\quad\quad\quad\quad\left.+\lambda^2\sin^3 t\,\eta\wedge\omega_2+\lambda\sin^2 t\,(\cos t\,\omega_3+\sin t\,\omega_1)\wedge dt\right\},\\
 \Psi_-&=(1+\lambda^2)\left\{\sin^3t\,(-\cos t\,\omega_1+\sin t\,\omega_3)\wedge\eta+\sin^2t\,\omega_2\wedge dt\right.\\
 &\quad\quad\quad\quad\quad\quad\left.-\lambda\sin^3 t(\cos t\,\omega_3+\sin t\,\omega_1)\wedge\eta+\lambda^2\sin^2 t\,\omega_2\wedge dt\right\}.
\end{align*}
\emph{In this case, the Riemannian metric and the almost complex structure are given by}
\begin{align*}
G&=dt^2+(1+\lambda^2)\sin^2t\,g,\\
JX&=\frac{1}{\sqrt{1+\lambda^2}}\,\left\{\sin t\left(\frac{1}{\lambda}h X+\lambda\phi X\right)+\cos t(\phi X-hX)\right\}+\sqrt{1+\lambda^2}\,\eta(X)\Upsilon,\\
J\Upsilon&=-\frac{1}{\sqrt{1+\lambda^2}}\,\xi,\quad \Upsilon=\sin t\frac{\partial}{\partial t}\,.
\end{align*}
\end{remark}

Corollary \ref{main5} together with Theorem \ref{main1} have an interesting application for a general nearly Sasakian manifold in any dimension.

\begin{corollary}
Every nearly Sasakian manifold is a contact manifold.
\end{corollary}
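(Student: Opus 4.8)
The goal is to show that on a nearly Sasakian manifold $M^{2n+1}$ the $1$-form $\eta$ satisfies $\eta\wedge(d\eta)^n\neq 0$ everywhere. Since $TM=[\xi]\oplus\ker\eta$, $\eta(\xi)=1$ and $\eta$ vanishes on $\mathcal D:=\ker\eta$, this is equivalent to showing that $d\eta$ restricts to a non-degenerate $2$-form on $\mathcal D$. First I would express $d\eta$ through the structure tensors: using $\eta=g(\cdot,\xi)$ and that $\nabla$ is metric and torsion-free, one gets $d\eta(X,Y)=g(\nabla_X\xi,Y)-g(\nabla_Y\xi,X)$, and then \eqref{nablaxi} together with the $g$-skew-symmetry of $\phi$ and $h$ gives
$$d\eta(X,Y)=-2g(\phi X,Y)+2g(hX,Y)=-2g\bigl((\phi-h)X,Y\bigr).$$
Because $\phi\xi=h\xi=0$ we have $\iota_\xi d\eta=0$, and because $\eta\circ\phi=\eta\circ h=0$ the endomorphism $\phi-h$ preserves $\mathcal D$; since $g$ is (positive) definite on $\mathcal D$, the restriction $d\eta|_{\mathcal D\times\mathcal D}$ is non-degenerate if and only if $(\phi-h)|_{\mathcal D}$ is invertible.

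The crux is therefore to prove that $(\phi-h)|_{\mathcal D}$ is an automorphism, and here the point is that $h$ anticommutes with $\phi$, so the mixed terms cancel and $(\phi-h)^2=\phi^2-\phi h-h\phi+h^2=\phi^2+h^2$. On $\mathcal D$ one has $\phi^2=-I$ (from $\phi^2=-I+\eta\otimes\xi$), while $h^2$ is negative semi-definite since $h$ is skew-symmetric; hence for every nonzero $X\in\mathcal D$,
$$g\bigl((\phi-h)^2X,X\bigr)=-g(X,X)+g(h^2X,X)=-g(X,X)-g(hX,hX)\le -g(X,X)<0.$$
Thus $(\phi-h)^2$ is negative definite, in particular invertible, on $\mathcal D$, so $\phi-h$ is invertible there. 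Consequently $d\eta|_{\mathcal D}$ is a non-degenerate skew form, $\dim\mathcal D=2n$, and $(d\eta|_{\mathcal D})^{\wedge n}$ is a nowhere-vanishing top form on $\mathcal D$; evaluating $\eta\wedge(d\eta)^n$ on $\xi$ together with a basis of $\mathcal D$ then yields $\eta(\xi)\,(d\eta|_{\mathcal D})^{\wedge n}\neq 0$, so $\eta$ is a contact form and $M$ is a contact manifold.

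One can also obtain the invertibility of $\phi-h$ on $\ker\eta$ from Theorem \ref{struttura}: $\phi-h$ restricts to $\phi$ on the Sasakian summand $\mathcal D(0)\cap\ker\eta$ (where its square is $-I$) and has square $-(1+\lambda_i^2)I$ on each $\mathcal D(-\lambda_i^2)$ (where $h^2=-\lambda_i^2 I$); since these summands are mutually $g$-orthogonal and preserved by $\phi-h$, the $2$-form $d\eta$ is block-diagonal and non-degenerate on $\ker\eta$. I do not expect a genuine obstacle in this argument: the only points that need care are the sign bookkeeping in the formula for $d\eta$ and the observation that the anticommutation of $\phi$ and $h$ kills the mixed terms of $(\phi-h)^2$, leaving the manifestly negative-definite operator $-I+h^2$ on $\ker\eta$.
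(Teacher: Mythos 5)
Your proof is correct, and it takes a genuinely different and considerably more elementary route than the paper. The paper derives the contact condition from the full structure theory developed in Sections 3--4: it uses the eigendistribution decomposition of $h^2$ (Theorem \ref{main1}) to show that $d\eta$ is block-diagonal with respect to ${\mathcal D}(0)\oplus\bigoplus_i{\mathcal D}(-\lambda_i^2)$, then invokes the Sasakian structure on the leaves of ${\mathcal D}(0)$ and, crucially, Corollary \ref{main5} (the existence of a Sasaki--Einstein structure with contact form proportional to $\eta$ on each $5$-dimensional leaf, which itself rests on the $SU(2)$-structure correspondence of Theorem \ref{main2}) to get non-degeneracy block by block. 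You instead observe directly that $d\eta(X,Y)=-2g((\phi-h)X,Y)$, that $\iota_\xi d\eta=0$, and that $(\phi-h)^2=\phi^2+h^2=-I+h^2$ on $\ker\eta$ because $h$ anticommutes with $\phi$; since $h$ is skew-symmetric this operator is negative definite on $\ker\eta$, so $\phi-h$ is invertible there and $d\eta$ is non-degenerate on $\ker\eta$. Every identity you use ($\nabla\xi=-\phi+h$, $h\xi=0$, $\eta\circ h=0$, skew-symmetry and anticommutation of $h$) is already in the Preliminaries, so your argument is self-contained and bypasses the foliation and $SU(2)$ machinery entirely; what the paper's route buys is chiefly an illustration of how the contact property sits inside the finer structural decomposition, at the cost of logical dependence on much heavier results. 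All the steps check out, including the final evaluation of $\eta\wedge(d\eta)^n$ on a basis adapted to $[\xi]\oplus\ker\eta$.
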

\begin{proof}
Let $M$ be a nearly Sasakian manifold of dimension $2n+1$ with structure $(\phi,\xi,\eta,g)$. With the notation used in Section \ref{foliationsection} preliminarly we prove that for any $X\in{\mathcal D}(-\lambda_{i}^{2})$, $Y\in{\mathcal D}(-\lambda_{j}^{2})$
\begin{equation}\label{contatto1}
d\eta(X,Y)=0,
\end{equation}
for each $i,j\in\left\{1,\ldots,r\right\}$, $i\neq j$. Indeed,
\[d\eta(X,Y)=g(Y,\nabla_{X}\xi)-g(X,\nabla_{Y}\xi)=2g(X,\phi Y) + 2g(hX,Y)=0\]
since the operators $\phi$ and $h$ preserve ${\mathcal D}(-\lambda_{i}^2)$ and the distributions ${\mathcal D}(-\lambda_{i}^2)$ and ${\mathcal D}(-\lambda_{j}^2)$ are mutually orthogonal. In a similar way one can prove that for any  $X\in{\mathcal D}(-\lambda_{i}^{2})$ and $Z\in{\mathcal D}(0)$
\begin{equation}\label{contatto1b}
d\eta(X,Z)=0.
\end{equation}
Now, fix a point $x \in M$. By a) in Theorem \ref{main1} there exists a basis $\{\xi_{x}, e_{1}, \ldots, e_{2p}\}$  of ${\mathcal D}_{x}(0)$ such that
\begin{equation}\label{contatto2}
\eta\wedge (d\eta)^{p}(\xi_{x}, e_{1}, \ldots, e_{2p})\neq 0.
\end{equation}
By  b)  in Theorem \ref{main1}  and Corollary \ref{main5}, \ for each $i\in\left\{1,\ldots,r\right\}$ \ one can find a basis \ $\{\xi_{x}, v_{1}^{i}, v_{2}^{i}, v_{3}^{i}, v_{4}^{i}\}$ of ${\mathcal D}_{x}(-\lambda_{i}^{2})$ such that
\begin{equation}\label{contatto3}
\eta\wedge (d\eta)^{2}(\xi_{x}, v_{1}^{i}, v_{2}^{i}, v_{3}^{i}, v_{4}^{i})\neq 0.
\end{equation}
Then by \eqref{contatto1}, \eqref{contatto1b}, \eqref{contatto2} and \eqref{contatto3} one has
\begin{align*}
\eta \wedge (d\eta)^{n} & \left(  \xi_{x}, e_{1}, \ldots, e_{2p},  v_{1}^{1}, v_{2}^{1}, v_{3}^{1}, v_{4}^{1}, \ldots, v_{1}^{r}, v_{2}^{r}, v_{3}^{r}, v_{4}^{r}\right) \\
&= \eta(\xi_{x}) (d\eta)^{p}(e_{1}, \ldots, e_{2p}) (d\eta)^{2}(v_{1}^{1}, v_{2}^{1}, v_{3}^{1}, v_{4}^{1}) \cdots (d\eta)^{2}(v_{1}^{r}, v_{2}^{r}, v_{3}^{r}, v_{4}^{r}) \neq 0.
\end{align*}
\end{proof}

Theorem \ref{main3} shows that any $5$-dimensional nearly Sasakian manifold is naturally endowed with a nearly cosymplectic structure, via the nearly Sasakian $SU(2)$-structure \eqref{mainSU}. On the other hand, as pointed out in Remark \ref{main4}, the deformed $SU(2)$-structure \eqref{Sasaki-E}, which is Sasaki-Einstein, carries two other nearly cosymplectic structures. Thus we devote the next section to further investigate nearly cosymplectic structures on $5$-dimensional manifolds: we show that they are nothing but deformations of Sasaki-Einstein $SU(2)$-structures.

\section{Sasaki-Einstein $SU(2)$-structures and nearly cosymplectic manifolds}

First, we remark that in any $5$-dimensional nearly cosymplectic manifold $(M,\phi, \xi,\eta,g)$ the vanishing of the operator $h$ defined in \eqref{nablaxi_c} provides a necessary and sufficient condition for the structure to be coK\"{a}hler. Indeed, if $h=0$ then the distribution $\mathcal D$ orthogonal to $\xi$ is integrable with totally geodesic leaves; the manifold $M$ turns out to be locally isometric to the Riemannian product $N\times \mathbb{R}$, where $N$ is an integral submanifold of ${\mathcal D}=\ker(\eta)$ endowed with a nearly K\"ahler structure $(g,J)$ induced by the structure tensors $(g,\phi)$. On the other hand, it is known that $4$-dimensional nearly K\"ahler manifolds are K\"ahler (see \cite[Theorem 5.1]{Gray}), and this implies that $(\phi, \xi,\eta,g)$ is a coK\"{a}hler structure.
\medskip

Let $(M,\phi, \xi,\eta,g)$ be a $5$-dimensional nearly cosymplectic manifold.
Let $X$ be a local eigenvector field of the operator $h^2$ with eigenvalue $\mu\ne0$. Then $\{\xi, X, \phi X, hX, h\phi X\}$ is a local orthogonal frame, and $\phi X, hX, h\phi X$ are eigenvector fields of $h^2$ with the same eigenvalue $\mu$. Then one has $h^2=\mu(I-\eta\otimes\xi)$ which, together with \eqref{tr}, implies that $\mu$ is constant. On the other hand, being $h$ skew-symmetric, necessarily $\mu<0$. We put $\mu=-\lambda^2$, $\lambda\ne 0$. In fact $M$ is endowed with an $SU(2)$-structure, as described in the following theorem.

\begin{theorem}
A nearly cosymplectic  structure on a $5$-dimensional manifold is equivalent to an $SU(2)$-structure $(\eta,\omega_1,\omega_2,\omega_3)$ satisfying
\begin{equation}\label{mainSUc}
d\eta=-2\lambda\omega_3,\qquad d\omega_1=3\lambda\eta\wedge\omega_2,\qquad d\omega_2=-3\lambda\eta\wedge\omega_1
\end{equation}
for some real number $\lambda\ne0$. These $SU(2)$-structures are hypo.
\end{theorem}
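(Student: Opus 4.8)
The plan is to follow the blueprint of Theorem~\ref{main2}, replacing the nearly Sasakian identities by their nearly cosymplectic analogues. Let $(M,\phi,\xi,\eta,g)$ be a $5$-dimensional nearly cosymplectic manifold; as explained just before the statement, the operator $h$ of \eqref{nablaxi_c} satisfies $h^2=-\lambda^2(I-\eta\otimes\xi)$ for a nonzero constant $\lambda$. Since $h$ is skew-symmetric, anticommutes with $\phi$ and annihilates $\xi$, I would put $\phi_1:=\phi$, $\phi_2:=\frac{1}{\lambda}\phi h$, $\phi_3:=-\frac{1}{\lambda}h$; using $\phi^2=-I+\eta\otimes\xi$ and $h^2=-\lambda^2(I-\eta\otimes\xi)$ one checks at once that $(\phi_i,\xi,\eta,g)$ satisfy the quaternionic relations \eqref{quaternionic}, hence define an $SU(2)$-structure, with $\omega_i:=g(\phi_i\cdot,\cdot)$. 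The first equation in \eqref{mainSUc} is then immediate from \eqref{nablaxi_c}, since $d\eta(X,Y)=g(\nabla_X\xi,Y)-g(\nabla_Y\xi,X)=2g(hX,Y)=-2\lambda\,\omega_3(X,Y)$.

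For the remaining two equations I would first record, in the spirit of Lemma~\ref{lemmanablaphi}, the covariant derivatives of $\phi$, $h$, $\phi h$ on a $5$-dimensional nearly cosymplectic manifold:
\[(\nabla_X\phi)Y=g(\phi hX,Y)\xi-\eta(Y)\phi hX+\eta(X)\phi hY,\qquad (\nabla_Xh)Y=\lambda^2\bigl(\eta(Y)X-g(X,Y)\xi\bigr),\]
\[(\nabla_X(\phi h))Y=-\lambda^2 g(\phi X,Y)\xi-\lambda^2\eta(X)\phi Y+\lambda^2\eta(Y)\phi X.\]
The middle identity is \eqref{nablah_c} with $h^2=-\lambda^2(I-\eta\otimes\xi)$ inserted; the first follows from \eqref{nablaphi_hc} and \eqref{nearlycos-nablaxiphi} upon noting that $h$ is invertible on $\ker(\eta)$ with inverse $-\frac{1}{\lambda^2}h$ there — so feeding $hZ$ to \eqref{nablaphi_hc} recovers the $\ker(\eta)$-part of $(\nabla_X\phi)Y$, while the $\xi$-part comes from $g((\nabla_X\phi)Y,\xi)=-g(\phi Y,hX)$ together with the skew-symmetry of $\phi h$; the third is obtained from $\nabla(\phi h)=(\nabla\phi)h+\phi(\nabla h)$. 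Substituting these into $d\omega_i(X,Y,Z)=(\nabla_X\omega_i)(Y,Z)+(\nabla_Y\omega_i)(Z,X)+(\nabla_Z\omega_i)(X,Y)$ — using \eqref{dPhi_c} for $i=1$ and, for $i=2$, the fact (a one-line consequence of the last displayed formula) that $g((\nabla_X(\phi h))Y,Z)$ is totally skew in $(X,Y,Z)$ — and recognising $\eta\wedge\omega_2$ and $\eta\wedge\omega_1$ in the resulting expressions, one gets $d\omega_1=3\lambda\,\eta\wedge\omega_2$ and $d\omega_2=-3\lambda\,\eta\wedge\omega_1$.

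For the converse, I would start from an $SU(2)$-structure $(\eta,\omega_1,\omega_2,\omega_3)$ satisfying \eqref{mainSUc} with $\lambda\neq0$, take the associated structures $(\phi_i,\xi,\eta,g)$, and substitute \eqref{mainSUc} into the general formula \eqref{nablaphi} for $\nabla\phi_1$ (i.e.\ with $i=1$). As in the proof of Theorem~\ref{main2}, a lengthy but entirely mechanical simplification of the resulting $d\omega$- and $d\eta$-terms, using the quaternionic relations and $\omega_i=g(\phi_i\cdot,\cdot)$, should reduce $2g((\nabla_X\phi_1)Y,Z)$ to $-\frac{2}{3}d\Phi_1(X,Y,Z)$ (with $\Phi_1:=g(\cdot,\phi_1\cdot)$), which is precisely \eqref{dPhi_c} for $\phi_1$; hence $(\phi_1,\xi,\eta,g)$ is nearly cosymplectic. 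One then checks that its structure operator $\nabla\xi=-\lambda\phi_3$ (read off from $d\eta=-2\lambda\omega_3$, once $\xi$ is known to be Killing) has square $-\lambda^2(I-\eta\otimes\xi)$, in agreement with the direct construction. Finally, that these $SU(2)$-structures are hypo is immediate from \eqref{mainSUc} and the orthogonality relations \eqref{SU2}: $d\omega_3=-\frac{1}{2\lambda}d(d\eta)=0$, $d(\eta\wedge\omega_1)=d\eta\wedge\omega_1-\eta\wedge d\omega_1=-2\lambda\,\omega_3\wedge\omega_1=0$, and similarly $d(\eta\wedge\omega_2)=-2\lambda\,\omega_3\wedge\omega_2+3\lambda\,\eta\wedge\eta\wedge\omega_1=0$.

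The real obstacle I expect is pinning down the explicit expression for $(\nabla_X\phi)Y$ (and hence for $\nabla(\phi h)$): unlike the nearly Sasakian case, where \eqref{varie5} controls $g((\nabla_X\phi)Y,Z)$ for all arguments, here one only has \eqref{nablaphi_hc}, which determines $(\nabla_X\phi)Y$ solely after pairing it with a vector in the image of $h$, so one must exploit the invertibility of $h$ on the contact distribution and separately handle the $\xi$-component, keeping careful track of the signs coming from the skew-symmetry of $\phi h$ and from the convention $\omega_i=g(\phi_i\cdot,\cdot)=-\Phi_i$. The converse direction, while computation-heavy, is conceptually routine: it is the same bookkeeping as in the proof of Theorem~\ref{main2}.
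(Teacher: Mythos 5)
Your proposal is correct and follows essentially the same route as the paper: build the quaternionic triple from $\phi$, $h$ and $\phi h$ (you use the labelling $(\phi_1,\phi_2,\phi_3)=(\phi,\tfrac{1}{\lambda}\phi h,-\tfrac{1}{\lambda}h)$ while the paper uses $(-\tfrac{1}{\lambda}\phi h,\phi,-\tfrac{1}{\lambda}h)$ --- both satisfy \eqref{quaternionic} and yield \eqref{mainSUc}), compute the $d\omega_i$ from covariant derivatives, and prove the converse through \eqref{nablaphi}. The only tactical difference is that you derive explicit pointwise formulas for $\nabla\phi$, $\nabla h$, $\nabla(\phi h)$ (which I checked are correct, including the inversion of $h$ on $\ker\eta$) and antisymmetrize, whereas the paper shortcuts the horizontal part of one of the $d\omega_i$ via $\eta\wedge d\omega_i=d\eta\wedge\omega_i=0$ and only computes its $\xi$-component directly.
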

\begin{proof}
Let $(M,\phi,\xi,\eta,g)$ be a nearly cosymplectic $5$-manifold. The operator $h$ satisfies
\begin{equation*}\label{nearlycos-h}
h^2=-\lambda^2(I-\eta\otimes\xi),
\end{equation*}
for some real number $\lambda\ne0$. Arguing as in Theorem \ref{main2}, the tensor fields
\[\phi_1:=-\frac{1}{\lambda}\phi h,\qquad \phi_2=\phi,\qquad \phi_3:=-\frac{1}{\lambda} h\]
determine  an $SU(2)$-structure $(\eta,\omega_1,\omega_2,\omega_3)$, with $\omega_i(X,Y):=g(\phi_i X,Y)$.
We prove that this structure satisfies \eqref{mainSUc}. Indeed, using \eqref{nablaxi_c}, a simple computation shows that
\[
d\eta(X,Y)=2 g(hX,Y)= -2\lambda\omega_3(X,Y).\]
By \eqref{dPhi_c}, we have
\[
d\omega_2(X,Y,Z)= 3g((\nabla_X\phi)Y,Z).
\]
For $X=\xi$, using \eqref{nearlycos-nablaxiphi}, we get
\[d\omega_2(\xi,Y,Z)=3g((\nabla_\xi\phi)Y,Z)=3g(\phi hY,Z)=-3\lambda\omega_1(Y,Z).\]
Equation \eqref{nablaphi_hc} implies that for every vector fields $X,Y,Z$ orthogonal to $\xi$, $g((\nabla_X\phi)Y,Z)=0$ and thus $d\omega_2(X,Y,Z)=0$. Therefore
$d\omega_2=-3\lambda\eta\wedge\omega_1$. In particular we get $d(\eta\wedge\omega_1)=0$ and hence, by \eqref{SU2},
\[\eta\wedge d\omega_1=d\eta\wedge \omega_1=0.\]
Therefore, for every vector fields $X,Y,Z$ orthogonal to $\xi$,
\[d\omega_1(X,Y,Z)=(\eta\wedge d\omega_1)(\xi,X,Y,Z)=0.\]
Now, from \eqref{nablah_c} we have $\nabla_\xi h=0$, and thus, by \eqref{nearlycos-nablaxiphi},
\[\nabla_\xi(\phi h)=(\nabla_\xi\phi) h=\phi h^2=-\lambda^2\phi.\]
Hence, for every vector fields $Y$, $Z$, using also \eqref{nablaxi_c}, we compute
\[\lambda d\omega_1(\xi,Y,Z)=-g((\nabla_\xi\phi h)Y,Z)-g((\nabla_Y\phi h)Z,\xi)-g((\nabla_Z\phi h)\xi,Y)=3\lambda^2g(\phi Y,Z)\]
which implies $d\omega_1(\xi,Y,Z)=3\lambda\omega_2(Y,Z)$.
Consequently, $d\omega_1=3\lambda\eta\wedge\omega_2$ and this completes the proof of \eqref{mainSUc}.

As for the converse, assume that $M$ is a $5$-manifold with an $SU(2)$-structure satisfying \eqref{mainSUc} for some real number $\lambda\ne0$. Consider the associated almost contact metric structures $(\phi_i,\xi,\eta,g)$, $i\in\{1,2,3\}$. By using \eqref{nablaphi} and \eqref{mainSUc}, a straightforward computation shows that the covariant derivative of $\phi_2$ is given by:
\[g((\nabla_X\phi_2)Y,Z)=-\frac{1}{3}\,d\Phi(X,Y,Z)\]
so that $(\phi_2,\xi,\eta,g)$ is a nearly cosymplectic structure. The associated operator $h=\nabla\xi$ coincides with $-\lambda\phi_3$. Indeed, applying \eqref{mainSUc},
 \[g((\nabla_\xi\phi_2)Y,Z)=\frac{1}{3}\,d\omega_2(X,Y,Z)=-\lambda(\eta\wedge\omega_1)(\xi,Y,Z)=-\lambda g(\phi_1Y,Z),\]
and thus $\nabla_\xi\phi_2=-\lambda\phi_1$. On the other hand, by \eqref{nearlycos-nablaxiphi}, $\nabla_\xi\phi_2=\phi_2h$. Hence, $h=\lambda\phi_2\phi_1=-\lambda\phi_3$.

Finally, form \eqref{mainSUc}  the forms $\omega_3$, $\eta\wedge\omega_1$, $\eta\wedge\omega_2$ are closed so that the structure $(\eta,\omega_1,\omega_2,\omega_3)$ is hypo.
\end{proof}

Note that if $(\eta,\omega_1,\omega_2,\omega_3)$ is an $SU(2)$-structure satisfying \eqref{mainSUc} and $(\phi_i,\xi,\eta,g)$, $i\in\{1,2,3\}$, are the associated almost contact metric structures, then applying \eqref{nablaphi} one can  verify that also $(\phi_1,\xi,\eta,g)$ is a nearly cosymplectic structure, while the covariant derivative of $\phi_3$ is given by
\[(\nabla_X\phi_3)Y=\lambda(g(X,Y)\xi-\eta(Y)X),\]
and thus $(\phi_3,\xi,\eta,g)$ is a $\lambda$-Sasakian structure. In particular, for $\lambda=1$, equations \eqref{mainSUc} reduce to the equations of a Sasaki-Einstein structure, so that we deduce the following results.

\begin{corollary}\label{SE-nearly}
Let $(\eta,\omega_1,\omega_2,\omega_3)$ be an $SU(2)$-structure satisfying the Sasaki-Einstein equations \eqref{S-E}. Let $(\phi_i,\xi,\eta,g)$, $i\in\{1,2,3\}$, be the associated almost contact metric structures. Then, for $i=1,2$, $(\phi_i,\xi,\eta,g)$ is a  nearly cosymplectic structure.
\end{corollary}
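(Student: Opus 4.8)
The plan is to recognize that the Sasaki-Einstein equations \eqref{S-E} are nothing but the system \eqref{mainSUc} evaluated at $\lambda=1$: substituting $\lambda=1$ into $d\eta=-2\lambda\omega_3$, $d\omega_1=3\lambda\eta\wedge\omega_2$, $d\omega_2=-3\lambda\eta\wedge\omega_1$ returns exactly \eqref{S-E}. Since an $SU(2)$-structure is, by definition, the datum $(\eta,\omega_1,\omega_2,\omega_3)$, and the triple of almost contact metric structures $(\phi_i,\xi,\eta,g)$ is canonically reconstructed from it via $\omega_i=g(\phi_i\cdot,\cdot)$ (the metric $g$ and the vector field $\xi$ being themselves determined by the $SU(2)$-data), the $\phi_i$ attached to a Sasaki-Einstein $SU(2)$-structure coincide with the $\phi_i$ appearing in the converse part of the theorem above. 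Hence I would simply invoke that theorem, together with the observation recorded immediately after it, to conclude that both $(\phi_1,\xi,\eta,g)$ and $(\phi_2,\xi,\eta,g)$ are nearly cosymplectic; the remaining structure $(\phi_3,\xi,\eta,g)$ is then $1$-Sasakian, i.e.\ Sasakian, consistently with the standard description of Sasaki-Einstein $SU(2)$-structures, which also confirms that no index relabelling is needed.

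If one preferred a self-contained argument, the alternative would be to feed \eqref{S-E} directly into the general formula \eqref{nablaphi} for $2g((\nabla_X\phi_i)Y,Z)$ and check, for $i=1$ and $i=2$, that the resulting expression is totally skew-symmetric in $X,Y,Z$, which by \eqref{dPhi_c} is equivalent to the nearly cosymplectic condition $(\nabla_X\phi_i)X=0$. This is precisely the computation already performed for $\phi_2$ in the proof of the theorem (with $\lambda=1$), and the one for $\phi_1$ is entirely analogous; I would relegate it to ``a straightforward computation''.

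There is no genuine obstacle here: the only point deserving an explicit line is the identification of \eqref{S-E} with the $\lambda=1$ instance of \eqref{mainSUc}, together with the remark that the associated almost contact metric structures depend only on the underlying $SU(2)$-structure and not on the differential equations it happens to satisfy. Everything else is a direct citation of the results just proved.
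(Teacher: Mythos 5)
Your proposal is correct and follows essentially the same route as the paper: the authors state the corollary as an immediate consequence of the theorem characterizing nearly cosymplectic $5$-manifolds via \eqref{mainSUc}, together with the remark (proved by applying \eqref{nablaphi}) that $(\phi_1,\xi,\eta,g)$ is also nearly cosymplectic and $(\phi_3,\xi,\eta,g)$ is $\lambda$-Sasakian, specialized to $\lambda=1$. Your explicit check that the associated almost contact metric structures depend only on the $SU(2)$-data and that no index relabelling is needed matches the paper's (implicit) reasoning.
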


\begin{corollary}\label{nearlycos-se}
Each nearly cosymplectic $5$-dimensional manifold carries a Sasaki-Einstein structure. Conversely, each Sasaki-Einstein $5$-manifold carries a $1$-parameter family of nearly cosymplectic  structures.
\end{corollary}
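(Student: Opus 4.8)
The plan is to follow the pattern of the proof of Corollary \ref{main5}, relying on the theorem just established which identifies nearly cosymplectic structures on a $5$-manifold with $SU(2)$-structures $(\eta,\omega_1,\omega_2,\omega_3)$ obeying \eqref{mainSUc} for some $\lambda\ne0$. Since \eqref{mainSUc} differs from the Sasaki-Einstein system \eqref{S-E} only by the scalar $\lambda$ multiplying each right-hand side, one expects to pass between the two classes by a mere rescaling of $\eta$ and of the $\omega_i$ (with the $\phi_i$ unchanged and $g$ rescaled); note that, unlike in the nearly Sasakian case \eqref{Sasaki-E}, no rotation in the $(\omega_1,\omega_3)$-plane is needed here.

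For the first assertion, let $(\eta,\omega_1,\omega_2,\omega_3)$ be an $SU(2)$-structure satisfying \eqref{mainSUc}, and set
\[
\tilde\eta:=\lambda\,\eta,\qquad \tilde\omega_i:=\lambda^2\,\omega_i\quad(i=1,2,3).
\]
First I would verify that $(\tilde\eta,\tilde\omega_1,\tilde\omega_2,\tilde\omega_3)$ is again an $SU(2)$-structure: one has $\tilde\omega_i\wedge\tilde\omega_j=\lambda^4\,\omega_i\wedge\omega_j=\delta_{ij}\tilde v$ with $\tilde v:=\lambda^4 v$, so $\tilde v\wedge\tilde\eta=\lambda^5\,v\wedge\eta\ne0$, and $X\lrcorner\,\tilde\omega_1=Y\lrcorner\,\tilde\omega_2$ forces $X\lrcorner\,\omega_1=Y\lrcorner\,\omega_2$, whence $\tilde\omega_3(X,Y)=\lambda^2\omega_3(X,Y)\ge0$. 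Then a one-line computation from \eqref{mainSUc} gives $d\tilde\eta=\lambda\,d\eta=-2\lambda^2\omega_3=-2\tilde\omega_3$, $d\tilde\omega_1=\lambda^2\,d\omega_1=3\lambda^3\,\eta\wedge\omega_2=3\,\tilde\eta\wedge\tilde\omega_2$ and $d\tilde\omega_2=\lambda^2\,d\omega_2=-3\lambda^3\,\eta\wedge\omega_1=-3\,\tilde\eta\wedge\tilde\omega_1$, i.e.\ exactly \eqref{S-E}. Hence $(\tilde\eta,\tilde\omega_1,\tilde\omega_2,\tilde\omega_3)$ is a Sasaki-Einstein structure on $M$, with Reeb field $\lambda^{-1}\xi$ and metric $\lambda^2 g$.

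For the converse, given a Sasaki-Einstein $SU(2)$-structure $(\tilde\eta,\tilde\omega_1,\tilde\omega_2,\tilde\omega_3)$ satisfying \eqref{S-E} and an arbitrary $\lambda\ne0$, I would set
\[
\eta:=\tfrac{1}{\lambda}\,\tilde\eta,\qquad \omega_i:=\tfrac{1}{\lambda^2}\,\tilde\omega_i\quad(i=1,2,3),
\]
check, exactly as above, that this is an $SU(2)$-structure, and compute from \eqref{S-E} that it satisfies \eqref{mainSUc} with parameter $\lambda$. By the theorem identifying such $SU(2)$-structures with nearly cosymplectic structures, this produces a nearly cosymplectic (non-coK\"ahler, since $\lambda\ne0$) structure $(\phi_2,\xi,\eta,g)$ on $M$, with metric $\lambda^{-2}\tilde g$ and Reeb field $\lambda\,\tilde\xi$. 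As distinct values of $\lambda$ yield structures with distinct metrics or distinct Reeb vector fields, one obtains in this way a genuine $1$-parameter family.

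I do not expect a serious obstacle: the whole content has already been absorbed into the equivalence theorem, and what remains is the routine verification that a rescaled quadruplet still satisfies the normalization \eqref{SU2} and the positivity condition \eqref{SU2b} of an $SU(2)$-structure. The only mildly substantive by-product worth recording is that, since $\mathrm{Ric}$ is homothety invariant and a Sasaki-Einstein $5$-metric satisfies $\mathrm{Ric}=4g$, the identities above re-derive that every nearly cosymplectic $5$-manifold is Einstein with (positive) scalar curvature $20\lambda^2$.
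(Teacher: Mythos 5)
Your proposal is correct and coincides with the paper's own proof: the paper uses exactly the rescaling $\tilde\eta=\lambda\eta$, $\tilde\omega_i=\lambda^2\omega_i$ (and its inverse for the converse), with the verification of \eqref{S-E} from \eqref{mainSUc} left as the same one-line computation you carry out. Your additional checks of the $SU(2)$-structure conditions \eqref{SU2} and \eqref{SU2b} only make explicit what the paper calls obvious.
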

\begin{proof}
Let $M$ be a $5$-dimensional manifold and let $(\eta,\omega_1,\omega_2,\omega_3)$ be an $SU(2)$-structure satisfying \eqref{mainSUc} for some real number $\lambda\ne0$.
Put
\begin{equation}\label{Sasaki-Ec}
\tilde\eta :=\lambda\eta,\quad\tilde\omega_1:=\lambda^2\omega_1,\quad\tilde\omega_2:=\lambda^2\omega_2,\quad\tilde\omega_3:=\lambda^2\omega_3.
\end{equation}
Obviously $(\tilde\eta,\tilde\omega_1,\tilde\omega_2,\tilde\omega_3)$ is an $SU(2)$-structure and one can easily check that it satisfies \eqref{S-E}.
Conversely, given a Sasaki-Einstein structure $(\tilde\eta,\tilde\omega_1,\tilde\omega_2,\tilde\omega_3)$ on $M$, for any real number $\lambda\ne 0$, one can define the $SU(2)$-structure
\begin{equation*}\label{Sasaki-Enearlyc}
\eta :=\frac{1}{\lambda}\,\tilde\eta,\quad\omega_1:=\frac{1}{\lambda^2}\,\tilde\omega_1,\quad
\omega_2:=\frac{1}{\lambda^2}\,\tilde\omega_2,\quad
\omega_3:=\frac{1}{\lambda^2}\,\tilde\omega_3,
\end{equation*}
which satisfies \eqref{mainSUc}.
\end{proof}

In terms of almost contact metric structures, the Sasaki-Einstein structure $(\tilde\phi,\tilde\xi,\tilde\eta,\tilde g)$ attached to any  $5$-dimensional nearly cosymplectic  manifold $(M,\phi,\xi,\eta,g)$, stated by Corollary \ref{nearlycos-se}, is given by
\begin{equation*}
\tilde\phi=-\frac{1}{\lambda}h,\quad \tilde\xi=\frac{1}{\lambda}\,\xi,\quad \tilde\eta=\lambda\eta,\quad\tilde g=\lambda^2g.
\end{equation*}
In particular, the scalar curvatures $s$ and $\tilde s$ of $g$ and $\tilde g$, respectively, are related by
\begin{equation}\label{scalar_c}
s=\lambda^2\tilde s=20\lambda^2.
\end{equation}
Therefore we have the following

\begin{theorem}
Every nearly cosymplectic (non-coK\"{a}hler) $5$-dimensional manifold is Einstein with positive scalar curvature.
\end{theorem}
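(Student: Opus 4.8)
The plan is to obtain the statement as an immediate consequence of Corollary \ref{nearlycos-se}, using the scalar-curvature accounting already carried out in \eqref{scalar_c}. Since $M$ is assumed to be non-coK\"ahler, the operator $h$ defined by \eqref{nablaxi_c} does not vanish; as recalled earlier in this section, on the orthogonal complement of $\xi$ the symmetric operator $h^2$ has a single eigenvalue, which is constant by \eqref{tr} and strictly negative by skew-symmetry of $h$, so that $h^2=-\lambda^2(I-\eta\otimes\xi)$ with $\lambda\neq0$ and $M$ carries the associated $SU(2)$-structure $(\eta,\omega_1,\omega_2,\omega_3)$ satisfying \eqref{mainSUc}.

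First I would invoke Corollary \ref{nearlycos-se}, or rather the explicit homothetic deformation \eqref{Sasaki-Ec}: the rescaled quadruplet $\tilde\eta=\lambda\eta$, $\tilde\omega_i=\lambda^2\omega_i$ is a Sasaki-Einstein $SU(2)$-structure, whose underlying Riemannian metric is $\tilde g=\lambda^2 g$. A $5$-dimensional Sasaki-Einstein metric satisfies $\widetilde{\mathrm{Ric}}=4\tilde g$, hence has $\tilde s=20$. Next I would use the elementary fact that multiplying a Riemannian metric by a positive constant leaves the Levi-Civita connection, and therefore the Ricci $(0,2)$-tensor, unchanged, so that $\mathrm{Ric}_g=\mathrm{Ric}_{\tilde g}=4\tilde g=4\lambda^2 g$; this is precisely the assertion that $(M,g)$ is Einstein. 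Tracing with respect to $g$ then yields $s=20\lambda^2$, in agreement with \eqref{scalar_c}, and this is positive because $\lambda\neq0$.

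The hard part will be essentially nil: the only delicate point is the strict positivity, which rests entirely on $\lambda\neq0$, i.e. on the fact that the non-coK\"ahler hypothesis forces $h^2$ to be non-zero on $\ker\eta$. If a self-contained argument were desired one could instead compute $\mathrm{Ric}$ directly from \eqref{nablaphi_hc}, \eqref{nablah_c} and $h^2=-\lambda^2(I-\eta\otimes\xi)$, along the lines of Olszak's computation in the nearly Sasakian case, or read it off from the non-vanishing intrinsic-torsion forms via the formulas of \cite{BV} as in the Remark following Theorem \ref{main2}; but the deformation argument above is the shortest route.
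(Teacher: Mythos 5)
Your proposal is correct and follows exactly the paper's route: the theorem is deduced from Corollary \ref{nearlycos-se} via the homothety $\tilde g=\lambda^2 g$ of \eqref{Sasaki-Ec}, using the scale-invariance of the Ricci $(0,2)$-tensor to get $\mathrm{Ric}_g=4\lambda^2 g$ and $s=20\lambda^2>0$ as in \eqref{scalar_c}. Your explicit remark that the non-coK\"ahler hypothesis is what guarantees $\lambda\neq 0$ (via the $4$-dimensional nearly K\"ahler rigidity argument at the start of the section) correctly identifies the only point where strict positivity could fail.
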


\section{Hypersurfaces of nearly K\"ahler manifolds}\label{hypersurfaces}

Let $(N,J,\tilde g)$ be an almost Hermitian manifold of dimension $2n+2$. Let $\iota:M\to N$ be a $\mathcal{C}^\infty$ orientable hypersurface and $\nu$ a unit normal vector field. As it is known (see \cite[Section 4.5.2]{BLAIR}) on $M$ it is induced a natural almost contact metric structure $(\phi, \xi,\eta,g)$ given by
\[J\iota_*X=\iota_*\phi X+\eta(X)\nu,\qquad J\nu=-\iota_*\xi,\qquad g=\iota^*\tilde g.\]

We recall now the following fundamental results providing necessary and sufficient conditions for a hypersurface of a nearly K\"ahler manifold to be nearly cosymplectic or nearly Sasakian.

\begin{theorem}[\cite{BLAIR_cos}]\label{hyp_cosym}
Let $M$ be a hypersurface of a nearly K\"ahler manifold $(N,J,g')$. Then the induced almost contact metric manifold $(\phi,\xi,\eta,g)$ is nearly cosymplectic if and only if the second fundamental form is given by $\sigma=\beta(\eta\otimes\eta)\nu$ for some function $\beta$.
\end{theorem}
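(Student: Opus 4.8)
The plan is to work with the Gauss and Weingarten formulas for the hypersurface $\iota\colon M\to N$ and compute $(\nabla_X\phi)Y+(\nabla_Y\phi)X$ in terms of the second fundamental form $\sigma$ and the nearly K\"ahler condition on $N$. First I would recall the standard relations for a hypersurface: $\widetilde\nabla_X\iota_*Y=\iota_*\nabla_XY+g(AX,Y)\nu$ and $\widetilde\nabla_X\nu=-\iota_*AX$, where $A$ is the shape operator associated to $\nu$ and $\sigma(X,Y)=g(AX,Y)\nu$. Applying $\widetilde\nabla_X$ to the defining identity $J\iota_*Y=\iota_*\phi Y+\eta(Y)\nu$ and using that $N$ is almost Hermitian (so $\widetilde\nabla J$ enters), one separates tangential and normal components to obtain an explicit formula for $(\nabla_X\phi)Y$ of the schematic form
\begin{equation*}
\iota_*(\nabla_X\phi)Y = (\widetilde\nabla_{\iota_*X}J)\iota_*Y + g(AX,\phi Y)\nu\text{-terms} + \eta(Y)\iota_*AX + g(AX,Y)\iota_*\xi + \cdots,
\end{equation*}
together with a companion equation for $\nabla_X\xi$ coming from differentiating $J\nu=-\iota_*\xi$, namely $\iota_*\nabla_X\xi=(\widetilde\nabla_{\iota_*X}J)\nu+\iota_*\phi AX$.

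The key step is then to symmetrize in $X$ and $Y$. When one forms $(\nabla_X\phi)Y+(\nabla_Y\phi)X$, the nearly K\"ahler hypothesis $(\widetilde\nabla_{\iota_*X}J)\iota_*Y+(\widetilde\nabla_{\iota_*Y}J)\iota_*X=0$ kills all the ambient curvature-of-$J$ contributions, and what survives is a purely algebraic expression in $A$, $\phi$, $\eta$ and $\xi$. One should arrive at something like
\begin{equation*}
(\nabla_X\phi)Y+(\nabla_Y\phi)X = \eta(X)(\phi A + A\phi)Y + \eta(Y)(\phi A + A\phi)X - 2g(A\phi X,Y)\xi + (\text{terms in }g(AX,Y),\,g(AX,\xi)),
\end{equation*}
after also using the nearly K\"ahler consequence (valid on $N$) that $(\widetilde\nabla_{\iota_*X}J)\nu$ is controlled — in fact in a nearly K\"ahler manifold $\widetilde\nabla_U J$ is skew in the appropriate sense and $(\widetilde\nabla_\nu J)\nu=0$. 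The nearly cosymplectic condition $(\nabla_X\phi)Y+(\nabla_Y\phi)X=0$ is then equivalent to the vanishing of this algebraic expression for all $X,Y$. Testing against suitable vectors (first $X=Y=\xi$, then $X\perp\xi$, $Y\perp\xi$, then mixed) forces $AX$ to be proportional to $\xi$ whenever $X\perp\xi$ and to vanish on $\ker\eta$, i.e. $A=\beta(\eta\otimes\xi)$ with $\beta=\eta(A\xi)=g(A\xi,\xi)$; equivalently $\sigma=\beta(\eta\otimes\eta)\nu$. The converse direction is the easy one: substituting $\sigma=\beta(\eta\otimes\eta)\nu$ (so $AX=\beta\eta(X)\xi$) back into the symmetrized formula, every surviving term carries a factor $\eta(X)\eta(Y)$ times something antisymmetric or a factor killed by $\phi\xi=0$, $\eta\circ\phi=0$, and one checks directly that it vanishes.

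The main obstacle I anticipate is organizational rather than conceptual: correctly isolating the normal components of $\widetilde\nabla_X(J\iota_*Y)$ and keeping track of the several terms $g(AX,\phi Y)$, $g(\phi AX,Y)$, $g(AX,Y)\eta$, $\eta(Y)AX$ so that after symmetrization the cancellations are transparent and the resulting algebraic constraint on $A$ is clean. A secondary subtlety is making sure one uses only the genuinely nearly K\"ahler facts about $\widetilde\nabla J$ (in particular $(\widetilde\nabla_UJ)U=0$ and the skew-symmetry $g((\widetilde\nabla_UJ)V,W)$ totally skew after contracting with $J$) rather than the stronger K\"ahler condition, since the whole point is that the statement fails for K\"ahler-type hypotheses only in that the induced structure becomes coK\"ahler. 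Once the algebraic identity is in hand, the extraction of $A=\beta(\eta\otimes\xi)$ and the verification of the converse are short.
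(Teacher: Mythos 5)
The paper quotes this theorem from Blair's article without reproducing a proof, so there is no internal argument to compare against; your Gauss--Weingarten strategy is precisely the classical one and it does work. One correction to your schematic identity: differentiating $J\iota_*Y=\iota_*\phi Y+\eta(Y)\nu$ along $X$ and isolating tangential parts gives $\bigl[(\nabla'_{\iota_*X}J)\iota_*Y\bigr]^{\top}=\iota_*\bigl((\nabla_X\phi)Y-\eta(Y)AX+g(AX,Y)\xi\bigr)$, so symmetrizing and using the nearly K\"ahler condition on the pair of tangent vectors yields
\begin{equation*}
(\nabla_X\phi)Y+(\nabla_Y\phi)X=\eta(X)AY+\eta(Y)AX-2g(AX,Y)\xi ;
\end{equation*}
no $\phi A+A\phi$ or $g(A\phi X,Y)$ terms survive (those sit in the normal component, which encodes $\nabla\eta$ instead). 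From the displayed identity the equivalence is immediate: if $AX=\beta\eta(X)\xi$ the right-hand side vanishes identically, and conversely taking $X,Y\perp\xi$ forces $g(AX,Y)=0$, then $Y=\xi$ with $X\perp\xi$ gives $AX=2g(AX,\xi)\xi$, which combined with the previous step forces $AX=0$ on $\ker\eta$ and hence $A\xi=\beta\xi$ with $\beta=g(A\xi,\xi)$. Note also that the argument is simpler than you anticipate: only the symmetrized identity $(\nabla'_{U}J)V+(\nabla'_{V}J)U=0$ evaluated on vectors tangent to $M$ is needed, so the subtleties you flag about $(\nabla'_{\nu}J)\nu$ and the total skew-symmetry of $\nabla'J$ never enter.
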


\begin{theorem}[\cite{BlairSY}]\label{hyp_sas}
Let $M$ be a hypersurface of a nearly K\"ahler manifold $(N,J, g')$. Then the induced almost contact metric manifold $(\phi,\xi,\eta,g)$ is nearly Sasakian if and only if the second fundamental form is given by $\sigma=(-g+\beta(\eta\otimes\eta))\nu$ for some function $\beta$.
\end{theorem}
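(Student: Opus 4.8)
The plan is to relate the covariant derivative of $\phi$ on $M$ to that of $J$ on $N$ via the Gauss and Weingarten formulas, and then to read off the nearly Sasakian condition \eqref{main} as an algebraic constraint on the shape operator. I write $\tilde\nabla$ and $\nabla$ for the Levi-Civita connections of $(N,J,g')$ and $(M,g)$, and use the Gauss and Weingarten formulas $\tilde\nabla_X\iota_*Y=\iota_*\nabla_XY+g(AX,Y)\nu$ and $\tilde\nabla_X\nu=-\iota_*AX$, where $A$ is the (symmetric) shape operator, so that $\sigma(X,Y)=g(AX,Y)\nu$; under this convention the prescribed form $\sigma=(-g+\beta(\eta\otimes\eta))\nu$ is equivalent to $A=-I+\beta\,\eta\otimes\xi$.

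First I would differentiate the structural identity $J\iota_*Y=\iota_*\phi Y+\eta(Y)\nu$ in the direction $X$, substitute the Gauss--Weingarten formulas together with $J\nu=-\iota_*\xi$, and separate tangential and normal parts. Using $(\nabla_X\phi)Y=\nabla_X(\phi Y)-\phi\nabla_XY$, the tangential component reads
\[
\mathrm{tan}\bigl[(\tilde\nabla_XJ)\iota_*Y\bigr]=\iota_*(\nabla_X\phi)Y-\eta(Y)\,\iota_*AX+g(AX,Y)\,\iota_*\xi .
\]
Symmetrizing in $X$ and $Y$ and invoking the nearly K\"ahler condition $(\tilde\nabla_XJ)Y+(\tilde\nabla_YJ)X=0$, the left-hand side vanishes, and one obtains the identity
\[
(\nabla_X\phi)Y+(\nabla_Y\phi)X=\eta(Y)AX+\eta(X)AY-2g(AX,Y)\xi ,
\]
valid for \emph{every} orientable hypersurface of a nearly K\"ahler manifold (I would also record the normal components for completeness, but they are not needed for the final statement).

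It then remains to compare this identity with \eqref{main}. By the previous display, $(M,\phi,\xi,\eta,g)$ is nearly Sasakian if and only if $\eta(Y)AX+\eta(X)AY-2g(AX,Y)\xi=2g(X,Y)\xi-\eta(X)Y-\eta(Y)X$ for all $X,Y$. Setting $B:=A+I$, this rewrites as $\eta(Y)BX+\eta(X)BY=2g(BX,Y)\xi$; putting $X=\xi$ gives $BY=2g(B\xi,Y)\xi-\eta(Y)B\xi$, then $Y=\xi$ forces $B\xi=\beta\xi$ with $\beta:=g(B\xi,\xi)$, and substituting back yields $BY=\beta\,\eta(Y)\xi$ for all $Y$; the converse implication is immediate. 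Hence the nearly Sasakian condition is equivalent to $A=-I+\beta\,\eta\otimes\xi$, that is, to $\sigma=(-g+\beta(\eta\otimes\eta))\nu$, with $\beta$ the smooth function $g(A\xi,\xi)+1$. The only care needed is the sign bookkeeping in the Gauss--Weingarten splitting and, at the end, ruling out shape operators of any other form in the algebraic identity; neither should be a genuine obstacle, and as a sanity check the same computation specialized to $(\nabla_X\phi)Y+(\nabla_Y\phi)X=0$ reproduces Theorem \ref{hyp_cosym}.
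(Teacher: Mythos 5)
Your proof is correct: the Gauss--Weingarten computation yields the identity $(\nabla_X\phi)Y+(\nabla_Y\phi)X=\eta(Y)AX+\eta(X)AY-2g(AX,Y)\xi$ on any orientable hypersurface, and your linear-algebra step (setting $B=A+I$, evaluating at $X=\xi$ and then $Y=\xi$) correctly pins down $B=\beta\,\eta\otimes\xi$ as the exact condition for \eqref{main}, with signs consistent with the paper's conventions $J\nu=-\iota_*\xi$ and with the totally umbilical example $A=-I$. The paper itself gives no proof, citing \cite{BlairSY}; your argument is the standard one from that reference, so there is nothing to compare beyond noting that it is complete as written.
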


 Concerning $6$-dimensional nearly K\"ahler manifolds, we shall further investigate the $SU(2)$-structure induced on hypersurfaces satisfying the conditions stated in Theorems \ref{hyp_cosym} and \ref{hyp_sas}. First recall that, as proved in \cite{Gray}, any $6$-dimensional nearly K\"ahler non-K\"ahler manifold $(N,J, g')$ is Einstein and of constant type, i.e. it satisfies
\begin{equation}\label{nearlyK}
\|(\nabla'_XJ)Y\|^2=\frac{s'}{30}\left(\|X\|^2\cdot\|Y\|^2-g'(X,Y)^2-g'(X,JY)^2\right)
\end{equation}
where $\nabla'$ is the Levi-Civita connection and $s'>0$ is the scalar curvature of $g'$.

\begin{theorem}\label{hypersurface_c}
Let $(N,J,g')$ be a $6$-dimensional nearly K\"ahler non-K\"ahler manifold and let $M$ be a hypersurface such that the second fundamental form is given by $\sigma=\beta(\eta\otimes\eta)\nu$ for some function $\beta$. Let $(\phi,\xi,\eta,g)$ be the induced nearly cosymplectic structure on $M$ and $(\eta,\omega_1,\omega_2,\omega_3)$ the associated $SU(2)$-structure satisfying \eqref{mainSUc}. Then the operator $h$ coincides with the covariant derivative $\nabla'_\nu J$ and the constant $\lambda$ satisfies
\begin{equation*}
\lambda^2=\frac{s'}{30}
\end{equation*}
Therefore, the scalar curvature of the Einstein Riemannian metric $g$ is $s=\frac{2}{3}s'$.
\end{theorem}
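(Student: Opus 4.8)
The plan is to rewrite the ambient covariant derivative $\nabla'J$ in terms of the induced data via the Gauss and Weingarten formulas, then use the nearly K\"ahler symmetry to read off $\nabla'_\nu J$, and finally substitute into the constant type identity \eqref{nearlyK}.

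First I would record the Gauss--Weingarten formulas. From $\sigma=\beta(\eta\otimes\eta)\nu$ the shape operator is $A_\nu X=\beta\eta(X)\xi$, so for $X,Y$ tangent to $M$,
\[
\nabla'_X(\iota_*Y)=\iota_*(\nabla_XY)+\beta\eta(X)\eta(Y)\nu,\qquad \nabla'_X\nu=-\beta\eta(X)\iota_*\xi .
\]
Differentiating $J\nu=-\iota_*\xi$ and using the nearly cosymplectic identity $\nabla_X\xi=hX$ from \eqref{nablaxi_c}, the normal parts cancel (because $\eta(\xi)=1$) and one obtains
\[
(\nabla'_XJ)\nu=-\iota_*(hX)
\]
for every $X$ tangent to $M$; differentiating $J\iota_*Y=\iota_*\phi Y+\eta(Y)\nu$ similarly gives $(\nabla'_XJ)\iota_*Y=\iota_*((\nabla_X\phi)Y)+g(hX,Y)\nu$, though only the first identity is needed.

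Next, since $N$ is nearly K\"ahler, $(\nabla'_ZJ)W+(\nabla'_WJ)Z=0$; taking $Z=\iota_*X$ and $W=\nu$ yields $(\nabla'_\nu J)\iota_*X=-(\nabla'_{\iota_*X}J)\nu=\iota_*(hX)$. Since $\nabla'_\nu J$ is $g'$-skew-symmetric, this also forces $(\nabla'_\nu J)\nu=0$, so $\nabla'_\nu J$ is precisely the operator $\iota_*\circ h\circ\iota_*^{-1}$ on $\iota_*TM$ and vanishes on the normal line; this is the asserted identification $h=\nabla'_\nu J$.

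Finally I would evaluate \eqref{nearlyK} at $X=\nu$ and $Y=\iota_*Z$, where $Z$ is a local unit vector field on $M$ orthogonal to $\xi$. On the left, $\|(\nabla'_\nu J)\iota_*Z\|^2=\|\iota_*(hZ)\|^2=g(hZ,hZ)=-g(h^2Z,Z)=\lambda^2$, using the dimension five relation $h^2=-\lambda^2(I-\eta\otimes\xi)$ recalled at the start of Section 5. On the right, $\|\nu\|^2=\|\iota_*Z\|^2=1$, $g'(\nu,\iota_*Z)=0$ and $g'(\nu,J\iota_*Z)=\eta(Z)=0$, so the right-hand side equals $s'/30$. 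Hence $\lambda^2=s'/30$, and then \eqref{scalar_c} gives $s=20\lambda^2=\frac{2}{3}s'$. The argument is essentially bookkeeping; the only points requiring care are tracking the normal components in the Gauss--Weingarten step and invoking the normalization $h^2=-\lambda^2(I-\eta\otimes\xi)$, which is available only because $\dim M=5$.
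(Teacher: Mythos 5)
Your argument is correct and follows essentially the same route as the paper: compute $(\nabla'_XJ)\nu$ via the Gauss--Weingarten formulas, flip the arguments using the nearly K\"ahler symmetry to identify $\nabla'_\nu J$ with $h$, and then evaluate the constant type identity \eqref{nearlyK} on $(\nu,\,\iota_*Z)$ together with $h^2=-\lambda^2(I-\eta\otimes\xi)$ and \eqref{scalar_c}. The only (harmless) additions are your explicit tracking of $\iota_*$ and the observation that skew-symmetry forces $(\nabla'_\nu J)\nu=0$.
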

\begin{proof}
First notice that the hypothesis on the second fundamental form implies that, for any vector fields $X,Y\in\frak{X}(M)$,
\[\nabla'_XY=\nabla_XY+\beta\eta(X)\eta(Y)\nu,\qquad\nabla'_X\nu=-\beta\eta(X)\xi.\]
Therefore,
\begin{align*}
(\nabla'_\nu J)X &= -(\nabla'_X J)\nu\\
&= \nabla'_X\xi+J(\nabla'_X\nu)\\
&= \nabla_X\xi+\beta\eta(X)\nu-\beta\eta(X)J\xi\\
&= hX.
\end{align*}
Now, taking a unit vector field $X$ orthogonal to $\xi$ and applying \eqref{nearlyK}, we have
\[\|hX\|^2=\|(\nabla'_\nu J)X\|^2=\frac{s'}{30}.\]
On the other hand, being $h^2=-\lambda^2(I-\eta\otimes\xi)$, then $\|hX\|^2=-g(h^2X,X)=\lambda^2$.
The assertion on the scalar curvature is consequence of \eqref{scalar_c}.
\end{proof}

Under the hypothesis of the above theorem, applying the deformation \eqref{Sasaki-Ec} to the $SU(2)$-structure $(\eta,\omega_1,\omega_2,\omega_3)$, one obtains a Sasaki-Einstein structure. Therefore,
\begin{corollary}
Every hypersurface of a $6$-dimensional nearly K\"ahler non-K\"ahler manifold such that the second fundamental form is proportional to $(\eta\otimes\eta)\nu$ carries a Sasaki-Einstein structure.
\end{corollary}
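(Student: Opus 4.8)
The plan is to chain together the results already established in the paper, since almost all the work has been done. First I would record the dimension bookkeeping: because $N$ is $6$-dimensional, the hypersurface $M$ is $5$-dimensional, which is precisely the dimension in which nearly cosymplectic structures are equivalent to $SU(2)$-structures of the form \eqref{mainSUc}. Since by hypothesis the second fundamental form is $\sigma=\beta(\eta\otimes\eta)\nu$ for some function $\beta$, Theorem \ref{hyp_cosym} immediately yields that the induced almost contact metric structure $(\phi,\xi,\eta,g)$ on $M$ is nearly cosymplectic.

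The second step is to check that this nearly cosymplectic structure is genuinely non-coK\"ahler, i.e.\ that the constant $\lambda$ appearing in \eqref{mainSUc} does not vanish. This is exactly the content of Theorem \ref{hypersurface_c}: since $N$ is nearly K\"ahler non-K\"ahler, it is Einstein of constant type with scalar curvature $s'>0$, and the computation there identifies $h$ with $\nabla'_\nu J$ and gives $h^2=-\lambda^2(I-\eta\otimes\xi)$ with $\lambda^2=s'/30>0$. Hence $\lambda\neq0$, so the induced structure is not coK\"ahler and one is in the situation covered by the theorem characterizing nearly cosymplectic $5$-manifolds.

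The final step is to apply the deformation \eqref{Sasaki-Ec}. By that characterization theorem, $M$ carries an $SU(2)$-structure $(\eta,\omega_1,\omega_2,\omega_3)$ satisfying \eqref{mainSUc} for the above $\lambda\neq0$; setting $\tilde\eta:=\lambda\eta$, $\tilde\omega_i:=\lambda^2\omega_i$ produces, as shown in the proof of Corollary \ref{nearlycos-se}, an $SU(2)$-structure satisfying the Sasaki-Einstein equations \eqref{S-E}. Thus $M$ admits a Sasaki-Einstein $SU(2)$-structure, which is the assertion.

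There is no real obstacle here: the corollary is a formal consequence of Theorems \ref{hyp_cosym} and \ref{hypersurface_c} together with the deformation \eqref{Sasaki-Ec}. The only point requiring a little attention is the non-degeneracy $\lambda\neq0$, and this is exactly where the \emph{non}-K\"ahler hypothesis on $N$ enters (through $s'>0$); without it the induced structure could be coK\"ahler and the deformation would be empty. One should also keep in mind the standing assumptions that $M$ is connected and orientable, the latter being what guarantees the global unit normal $\nu$ and hence the induced almost contact metric structure.
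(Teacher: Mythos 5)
Your proposal is correct and follows exactly the paper's own route: Theorem \ref{hyp_cosym} gives the nearly cosymplectic structure, Theorem \ref{hypersurface_c} supplies $\lambda^2=s'/30\neq0$ (so the structure is non-coK\"ahler and fits the $SU(2)$-characterization \eqref{mainSUc}), and the deformation \eqref{Sasaki-Ec} produces the Sasaki-Einstein structure. Your explicit remark on where the non-K\"ahler hypothesis enters is a nice touch that the paper leaves implicit, but the argument is the same.
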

The above Corollary generalizes Lemma 2.1 of \cite{FISU} concerning totally geodesic hypersurfaces of nearly K\"ahler manifolds.

Analogously, we prove the following

\begin{theorem}
Let $(N,J,g')$ be a $6$-dimensional nearly K\"ahler non-K\"ahler manifold and let $M$ be a hypersurface such that the second fundamental form is given by $\sigma=(-g+\beta(\eta\otimes\eta))\nu$ for some function $\beta$. Let $(\phi,\xi,\eta,g)$ be the induced nearly Sasakian structure on $M$ and $(\eta,\omega_1,\omega_2,\omega_3)$ the associated $SU(2)$-structure  satisfying \eqref{mainSU}. Then the operator $h$ coincides with the covariant derivative $\nabla'_\nu J$ and the constant $\lambda$ satisfies
\begin{equation*}
\lambda^2=\frac{s'}{30}
\end{equation*}
Therefore, the scalar curvature of the Einstein Riemannian metric $g$ is $s=20+\frac{2}{3}s'$.
\end{theorem}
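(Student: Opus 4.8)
The plan is to mirror, step by step, the proof of the preceding theorem (the one for hypersurfaces with $\sigma=\beta(\eta\otimes\eta)\nu$), adjusting only those computations affected by the extra $-g$ term in the second fundamental form. First I would record the Gauss–Weingarten formulas for this $\sigma$: for $X,Y\in\mathfrak{X}(M)$ one has $\nabla'_XY=\nabla_XY+\bigl(-g(X,Y)+\beta\eta(X)\eta(Y)\bigr)\nu$ and $\nabla'_X\nu=X-\beta\eta(X)\xi$ (using $\sigma(X,Y)=g(SX,Y)\nu$ with shape operator $S=-I+\beta\,\eta\otimes\xi$, up to the sign convention fixed in Section~\ref{hypersurfaces}). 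By Theorem~\ref{hyp_sas} the induced structure $(\phi,\xi,\eta,g)$ is nearly Sasakian, so by Theorem~\ref{main2} it carries the associated $SU(2)$-structure $(\eta,\omega_1,\omega_2,\omega_3)$ satisfying \eqref{mainSU}, with its constant $\lambda$, and $h=\nabla\xi+\phi$ satisfies $h^2=-\lambda^2(I-\eta\otimes\xi)$.

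Next I would compute $(\nabla'_\nu J)X$. Using the nearly K\"ahler symmetry $(\nabla'_\nu J)X=-(\nabla'_X J)\nu=\nabla'_X(J\nu)-J\nabla'_X\nu=-\nabla'_X(\iota_*\xi)-J(X-\beta\eta(X)\xi)$. Expanding $\nabla'_X\xi$ via Gauss and $J$ via $J\iota_*X=\iota_*\phi X+\eta(X)\nu$, the normal components and the $\beta$-terms should cancel exactly as in the previous theorem, leaving $(\nabla'_\nu J)X=\nabla_X\xi+\phi X = hX$; that is, $h=\nabla'_\nu J$. This is the one place where the extra $-g$ in $\sigma$ enters the tangential computation, and it is precisely the Sasakian correction $+\phi X$ (as opposed to the nearly cosymplectic case, where $\nabla_X\xi=hX$) that makes the identity close. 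I expect this cancellation to be the only genuinely delicate bookkeeping step; everything else is formal.

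Finally, with $h=\nabla'_\nu J$ in hand, take a unit $X\perp\xi$ and apply the constant-type identity \eqref{nearlyK} with the pair $(\nu,X)$: since $\nu$ is a unit vector with $g'(\nu,X)=0$ and $g'(\nu,JX)=\mp\eta(X)=0$ (as $X\perp\xi$), the right-hand side reduces to $s'/30$, giving $\|hX\|^2=s'/30$. On the other hand $\|hX\|^2=-g(h^2X,X)=\lambda^2$, whence $\lambda^2=s'/30$. The statement on the scalar curvature then follows from the relation $s=20(1+\lambda^2)$ of \eqref{scalar_S} (valid for any $5$-dimensional nearly Sasakian manifold): $s=20\bigl(1+\tfrac{s'}{30}\bigr)=20+\tfrac{2}{3}s'$, which completes the proof.
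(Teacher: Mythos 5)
Your proposal is correct and follows the paper's own proof essentially verbatim: the same Gauss--Weingarten formulas for this $\sigma$, the same computation reducing $(\nabla'_\nu J)X$ to $\nabla_X\xi+\phi X=hX$, and the same application of the constant-type identity \eqref{nearlyK} to the pair $(\nu,X)$ followed by \eqref{scalar_S}. The only blemish is a sign slip in your intermediate line --- $-(\nabla'_X J)\nu=-\nabla'_X(J\nu)+J\nabla'_X\nu$, not $\nabla'_X(J\nu)-J\nabla'_X\nu$ --- but the cancellation you anticipate does go through and yields $hX$ as claimed.
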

\begin{proof}
For every vector fields $X,Y\in\frak{X}(M)$, we have
\[\nabla'_XY=\nabla_XY-g(X,Y)\nu+\beta\eta(X)\eta(Y)\nu,\qquad\nabla'_X\nu=X-\beta\eta(X)\xi.\]
Therefore,
\begin{align*}
(\nabla'_\nu J)X &= -(\nabla'_X J)\nu\\
&= \nabla'_X\xi+J(\nabla'_X\nu)\\
&= \nabla_X\xi-\eta(X)\nu+\beta\eta(X)\nu+J X-\beta\eta(X)J\xi\\
&= -\phi X+hX+\phi X\\
&= hX.
\end{align*}
Taking a unit vector field $X$ orthogonal to $\xi$ and applying \eqref{nearlyK}, we have $\|hX\|^2=\frac{s'}{30}$.
On the other hand, $\|hX\|^2=-g(h^2X,X)=\lambda^2$.
The assertion on the scalar curvature is consequence of \eqref{scalar_S}.
\end{proof}
In this case, applying the deformation \eqref{Sasaki-E} to the $SU(2)$-structure $(\eta,\omega_1,\omega_2,\omega_3)$, we obtain a Sasaki-Einstein structure. Therefore,
\begin{corollary}
Every hypersurface of a $6$-dimensional nearly K\"ahler non-K\"ahler manifold  such that the second fundamental form is given by $\sigma=(-g+\beta(\eta\otimes\eta))\nu$, for some function $\beta$, carries a Sasaki-Einstein structure.
\end{corollary}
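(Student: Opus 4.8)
The plan is to obtain the Sasaki--Einstein structure by composing the results already in hand. First I would apply Theorem~\ref{hyp_sas} of Blair--Yano--Showers: since the second fundamental form of $M$ is $\sigma=(-g+\beta(\eta\otimes\eta))\nu$, the almost contact metric structure $(\phi,\xi,\eta,g)$ induced on the $5$-dimensional hypersurface $M$ is nearly Sasakian. The point requiring a moment's care is to see that this structure is genuinely \emph{non-Sasakian}, so that the machinery of Section~\ref{SU2section} applies: this is exactly what the preceding theorem supplies. Indeed, there the operator $h$ is identified with $\nabla'_\nu J$, and since $N$ is nearly K\"ahler \emph{non-K\"ahler}, the constant-type identity \eqref{nearlyK} forces $\|hX\|^2=s'/30>0$ for every unit $X\perp\xi$; equivalently the constant $\lambda$ with $h^2=-\lambda^2(I-\eta\otimes\xi)$ satisfies $\lambda^2=s'/30\neq0$, so $h\not\equiv0$ and $M$ is nearly Sasakian but not Sasakian.

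Next I would invoke Theorem~\ref{main2} to translate this into $SU(2)$-language: the nearly Sasakian structure on $M$ corresponds to an $SU(2)$-structure $(\eta,\omega_1,\omega_2,\omega_3)$ satisfying the equations \eqref{mainSU} with this same $\lambda\neq0$. Finally I would perform the explicit deformation \eqref{Sasaki-E} of Corollary~\ref{main5}, i.e. replace $(\eta,\omega_1,\omega_2,\omega_3)$ by $(\tilde\eta,\tilde\omega_1,\tilde\omega_2,\tilde\omega_3)$ with $\tilde\eta=\sqrt{1+\lambda^2}\,\eta$, $\tilde\omega_1=\sqrt{1+\lambda^2}\,(\omega_1+\lambda\omega_3)$, $\tilde\omega_2=(1+\lambda^2)\,\omega_2$, $\tilde\omega_3=\sqrt{1+\lambda^2}\,(\omega_3-\lambda\omega_1)$; by Corollary~\ref{main5} this is again an $SU(2)$-structure and it satisfies the Sasaki--Einstein equations \eqref{S-E}, so $M$ carries a Sasaki--Einstein structure. (Alternatively one can simply quote Corollary~\ref{main5} directly once $M$ is known to be nearly Sasakian non-Sasakian.)

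There is no genuinely hard step here: the corollary is a direct composition of Theorem~\ref{hyp_sas}, the preceding theorem, Theorem~\ref{main2} and Corollary~\ref{main5}. The only subtlety worth flagging in the write-up is the role of the non-K\"ahler hypothesis on $N$, which is precisely what guarantees $\lambda\neq0$ and hence that the induced nearly Sasakian structure falls under the hypotheses of the $SU(2)$-correspondence and admits the Sasaki--Einstein deformation; without it $M$ could be Sasakian and the statement would be vacuous in a trivial way.
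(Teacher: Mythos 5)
Your proposal is correct and follows exactly the paper's route: Theorem~\ref{hyp_sas} gives the nearly Sasakian structure, the preceding theorem identifies $h=\nabla'_\nu J$ with $\lambda^2=s'/30\neq 0$ (so the structure is non-Sasakian), and the deformation \eqref{Sasaki-E} of Corollary~\ref{main5} then produces the Sasaki--Einstein structure. Your remark on the role of the non-K\"ahler hypothesis in guaranteeing $\lambda\neq0$ is a worthwhile point that the paper leaves implicit.
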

In particular the above Corollary holds for totally umbilical hypersurfaces of nearly K\"ahler manifolds with shape operator $A=-I$.

\begin{example}
\emph{We recall two basic examples of $5$-dimensional nearly cosymplectic and nearly Sasakian manifolds (\cite{BLAIR_cos, BlairSY}). First consider $\mathbb{R}^7$ as the imaginary part of the Cayley numbers $\mathbb{O}$,  with the product vector $\times$ induced by the Cayley product. Let $S^6$ be the unit sphere in $\mathbb{R}^7$ and $N=\sum_{i=1}^7x^i\frac{\partial}{\partial x^i}$  the unit outer normal. One can define an almost complex structure $J$ on $S^6$ by $JX=N\times X$. It is well known that this almost complex structure is nearly K\"ahler (non-K\"ahler) with respect to the induced Riemannian metric.}

\emph{Consider $S^5$ as a totally geodesic hypersurface of $S^6$ defined by $x^7=0$ with unit normal $\nu=-\frac{\partial}{\partial x^7}$. Let $(\phi,\xi,\eta,g)$ be the induced almost contact metric structure on $S^5$, with
\[\xi=-J\nu=N\times\frac{\partial}{\partial x^7}=x^1\frac{\partial}{\partial x^6}-x^2\frac{\partial}{\partial x^5}-x^3\frac{\partial}{\partial x^4}+x^4\frac{\partial}{\partial x^3}
 +x^5\frac{\partial}{\partial x^2}-x^6\frac{\partial}{\partial x^1},\]
and $\eta$ given by the restriction of $x^1dx^6-x^6dx^1+x^5dx^2-x^2dx^5+x^4dx^3-x^3dx^4$ to $S^5$. This almost contact metric structure is nearly cosymplectic non-coK\"ahler. Considering the associated $SU(2)$-structure $(\eta,\omega_1,\omega_2,\omega_3)$ satisfying \eqref{mainSUc}, we have $\lambda^2=1$ since the scalar curvature of $S^6$ is $s'=30$. Coherently with Theorem \ref{hypersurface_c}, the scalar curvature of $S^5$ is $s=20$.}

\emph{Now consider $S^5$ as a totally umbilical hypersurface of $S^6$ defined by $x^7=\frac{\sqrt{2}}{2}$, with unit normal at each point $x$ given by $\nu=x-\sqrt{2}\frac{\partial}{\partial x^7}=\sum_{i=1}^{6}x^i\frac{\partial}{\partial x^i}-\frac{\sqrt{2}}{2}\frac{\partial}{\partial x^7}$, so that the shape operator is $A=-I$. Let $(\phi,\xi,\eta,g)$ be the induced almost contact metric structure, where
\[\xi=-J\nu=\sqrt{2}\left(x^1\frac{\partial}{\partial x^6}-x^2\frac{\partial}{\partial x^5}-x^3\frac{\partial}{\partial x^4}+x^4\frac{\partial}{\partial x^3}
 +x^5\frac{\partial}{\partial x^2}-x^6\frac{\partial}{\partial x^1}\right)\,,\]
and $\eta$ given by the restriction of $\sqrt{2}\left(x^1dx^6-x^6dx^1+x^5dx^2-x^2dx^5+x^4dx^3-x^3dx^4\right)$ to $S^5$. This structure is nearly Sasakian, but not Sasakian and again, taking into account the associated $SU(2)$-structure satisfying \eqref{mainSU}, the constant $\lambda$ satisfies $\lambda^2=1$. The scalar curvature of the hypersurface is $40$, coherently with the fact that it has constant sectional curvature $2$.}
\end{example}

\section{Canonical connections on nearly Sasakian manifolds}
It is well known that nearly Kahler manifolds are endowed with a canonical Hermitian connection $\bar\nabla$, called \emph{Gray connection}, defined by
\begin{equation*}
\bar\nabla_{X}Y = \nabla_{X}Y + \frac{1}{2}(\nabla_{X}J)JY,
\end{equation*}
which is the unique Hermitian connection with totally skew-symmetric torsion. To the knowledge of the authors there does not exist any canonical connection, analogous to $\bar\nabla$, in the context of nearly Sasakian geometry. In particular, in \cite{FrIv} Friedrich and Ivanov proved that an almost contact metric manifold $(M,\phi,\xi,\eta,g)$ admits a (unique) linear connection with totally skew-symmetric torsion parallelizing all the structure tensors, if and only if $\xi$ is Killing and the tensor $N_\phi$ is totally skew-symmetric. Using this result, we prove the following
\begin{proposition}
A nearly Sasakian manifold $(M,\phi,\xi,\eta,g)$ admits a linear connection with totally skew-symmetric torsion parallelizing all the structure tensors if and only if it is Sasakian.
\end{proposition}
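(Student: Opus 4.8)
The statement will follow from the Friedrich--Ivanov criterion recalled above \cite{FrIv}: such a connection exists (and is then unique) if and only if $\xi$ is Killing and the $(0,3)$-tensor $c(X,Y,Z):=g(N_\phi(X,Y),Z)$ is totally skew-symmetric. Since $N_\phi$ is antisymmetric, $c$ is already alternating in its first two arguments, so total skew-symmetry is equivalent to the cyclic identity $c(X,Y,Z)=c(Z,X,Y)$. On a nearly Sasakian manifold $\xi$ is always Killing, so the whole statement reduces to showing that $N_\phi$ is totally skew-symmetric precisely when the structure is Sasakian. One direction is immediate: if $M$ is Sasakian then $N_\phi\equiv 0$, which is trivially totally skew-symmetric, and the connection exists by the same theorem.

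For the converse the plan is to localize the obstruction in the Reeb direction. First I would expand $N_\phi(\xi,X)$ by means of the formula $N_\phi(X,Y)=(\nabla_{\phi X}\phi)Y-(\nabla_{\phi Y}\phi)X+(\nabla_X\phi)\phi Y-(\nabla_Y\phi)\phi X+\eta(X)\nabla_Y\xi-\eta(Y)\nabla_X\xi$ used in the proof of Lemma~\ref{SU2-nablaphi}, substituting $\nabla_Z\xi=-\phi Z+hZ$ from \eqref{nablaxi} and $\nabla_\xi\phi=\phi h$ from \eqref{nablaxiphi}; since $\phi\xi=0$ kills most of the terms and $h\phi=-\phi h$, $\eta\circ h=0$ yield $\phi h\phi=h$, this collapses to $N_\phi(\xi,X)=3hX$, hence $c(\xi,X,Y)=3\,g(hX,Y)$. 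Independently, pairing the same expansion of $N_\phi(X,Y)$ with $\xi$, the terms involving only $\phi$ cancel against each other, the terms coming from $\eta(X)\nabla_Y\xi-\eta(Y)\nabla_X\xi$ die because $\eta(\nabla_Z\xi)=0$, and the remaining ones (computed directly using $\phi^2=-I+\eta\otimes\xi$, or via \eqref{dPhi}) give
\[ g(N_\phi(X,Y),\xi)=4\,g(hX,Y), \]
that is $c(X,Y,\xi)=4\,g(hX,Y)$. Now the cyclicity forced by total skew-symmetry gives $c(X,Y,\xi)=c(\xi,X,Y)$, whence $4\,g(hX,Y)=3\,g(hX,Y)$ for all $X,Y$; therefore $h=0$, and by the characterization of Olszak recalled in the Preliminaries (\cite{Olszak1}) the manifold $M$ is Sasakian. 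This closes the argument.

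The only delicate point is the bookkeeping in the two short computations of $N_\phi(\xi,X)$ and of $g(N_\phi(X,Y),\xi)$: one repeatedly uses $(\nabla_Z\phi)\xi=-\phi(\nabla_Z\xi)$, the identity $\phi h\phi=h$, the relations $\eta(\phi Z)=\eta(hZ)=0$, and the $g$-skew-symmetry of both $\phi$ and $h$. These manipulations are elementary; the essential mechanism is simply that the Reeb component of the Nijenhuis torsion of a nearly Sasakian manifold equals a fixed nonzero multiple of $h$ when read off from its first argument, but a different nonzero multiple when read off from its last argument, which cannot happen for a $3$-form unless $h$ vanishes.
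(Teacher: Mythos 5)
Your proposal is correct and follows essentially the same route as the paper: both invoke the Friedrich--Ivanov criterion, expand $N_\phi$ via the same formula in terms of $\nabla\phi$, and show that total skew-symmetry of $N_\phi$ forces $h=0$. Your two separate computations $N_\phi(\xi,X)=3hX$ and $g(N_\phi(X,Y),\xi)=4g(hX,Y)$ check out and in fact refine the paper's single identity $N(X,Y,\xi)+N(X,\xi,Y)=g(hX,Y)$, which is just their difference-compatible combination.
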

\begin{proof}
Recall that the tensor field $N_\phi$ is also given by
\[N_{\phi}(X,Y)=(\nabla_{\phi X}\phi)Y-(\nabla_{\phi Y}\phi)X+(\nabla_X\phi)\phi Y-(\nabla_Y\phi)\phi X+\eta(X)\nabla_Y\xi-\eta(Y)\nabla_X\xi.\]
Setting $N(X,Y,Z):=g(N_{\phi}(X,Y),Z)$, a straightforward computation using the above formula, \eqref{nablaxi} and \eqref{nablaxiphi}, gives
\[N(X,Y,\xi)+N(X,\xi,Y)=g(hX,Y).\]
Hence, if $N_\phi$ is totally-symmetric, then $h=0$ and the structure is Sasakian.
\end{proof}
Thus it makes sense to find adapted connections which can be useful in the study of nearly Sasakian manifolds. We have the following theorem.

\begin{theorem}
Let $(M,\phi,\xi,\eta,g)$ be a nearly Sasakian manifold. Fix a real number $r$. Then, there exists a unique linear connection $\bar\nabla$ which parallelizes all the structure tensors and such that the torsion tensor $\bar T$ of $\bar\nabla$ satisfies the following conditions:
\begin{itemize}
\item[1)] $\bar T$ is totally skew-symmetric on $\mathcal D=\ker(\eta)$,
\item[2)] the $(1,1)$-tensor field $\tau$ defined by
\[\tau X=\bar T(\xi, X)\]
satisfies
\begin{equation}\label{tau_phi}
\tau\phi+\phi\tau=-2(r+1)\phi^2.
\end{equation}
\end{itemize}
This linear connection is given by:
\begin{equation}\label{canonic0}
\bar\nabla_XY=\nabla_XY+H(X,Y)
\end{equation}
where
\begin{equation}\label{canonic}
H(X,Y)=\frac{1}{2}(\nabla_X\phi)\phi Y-r\,\eta(X)\phi Y+\eta(Y)(\phi-h)X-\frac{1}{2}g((\phi-h)X,Y)\xi.
\end{equation}
\end{theorem}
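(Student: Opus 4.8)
The plan is to prove existence and uniqueness simultaneously: first show that \emph{if} a connection $\bar\nabla$ with the stated properties exists, then its difference tensor $H(X,Y)=\bar\nabla_XY-\nabla_XY$ is forced to be given by \eqref{canonic}; then verify by direct computation that the $H$ defined by \eqref{canonic} indeed yields a connection parallelizing $\phi,\xi,\eta,g$ whose torsion satisfies 1) and 2). For the uniqueness direction I would first translate the parallelization conditions into pointwise algebraic constraints on $H$. The condition $\bar\nabla g=0$ is equivalent to $g(H(X,Y),Z)+g(Y,H(X,Z))=0$, i.e. $H(X,\cdot)$ is skew-symmetric in the last two slots. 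The condition $\bar\nabla\eta=0$ (equivalently $\bar\nabla\xi=0$) gives $H(X,\xi)=-\nabla_X\xi=\phi X-hX$, which pins down $H(X,\xi)$ completely and, by the skew-symmetry just noted, also $g(H(X,Y),\xi)=-g(Y,H(X,\xi))=-g((\phi-h)X,Y)$, recovering the last term of \eqref{canonic}. The condition $\bar\nabla\phi=0$ reads $(\nabla_X\phi)Y=H(X,\phi Y)-\phi H(X,Y)$.

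Next I would split $H$ according to whether the arguments are proportional to $\xi$ or lie in $\mathcal D$. Writing $X=\eta(X)\xi+X'$, $Y=\eta(Y)\xi+Y'$ with $X',Y'\in\mathcal D$, the pieces $H(X,\xi)$ and $g(H(X,Y),\xi)$ are already determined above. It remains to determine the $\mathcal D$-component of $H(X,Y')$ for $Y'\in\mathcal D$, i.e. the part $\tilde H(X,Y):=H(X,Y)+g((\phi-h)X,Y)\xi-\eta(Y)(\phi-h)X$, which is $\mathcal D$-valued and skew in its last two arguments when restricted to $\mathcal D$. The tensor $\tau X=\bar T(\xi,X)=\bar\nabla_\xi X-\bar\nabla_X\xi-[\xi,X]=H(\xi,X)-H(X,\xi)+(\text{Levi-Civita torsion}=0 \text{ part})$; using $\nabla_X\xi=-\phi X+hX$ one computes $\tau X=H(\xi,X)+2(\phi-h)X-(\nabla_\xi X-\nabla_X\xi-[\xi,X])$ — here the Levi-Civita torsion vanishes so $\tau X = H(\xi,X)+ (\phi - h)X +\phi X - hX$, wait, more carefully $\tau X = H(\xi,X) - H(X,\xi) = H(\xi,X)+(\phi-h)X$ on $\mathcal D$. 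Condition 2), $\tau\phi+\phi\tau=-2(r+1)\phi^2 = 2(r+1)(I-\eta\otimes\xi)$ on $\mathcal D$, together with $\bar\nabla\phi=0$ evaluated at $X=\xi$ (which gives $(\nabla_\xi\phi)Y=\phi h Y = H(\xi,\phi Y)-\phi H(\xi,Y)$), should determine $H(\xi,\cdot)$ on $\mathcal D$ up to the $r$-ambiguity, yielding the term $-r\,\eta(X)\phi Y$. Finally, for $X,Y\in\mathcal D$, the relation $(\nabla_X\phi)Y=\tilde H(X,\phi Y)-\phi\tilde H(X,Y)$ combined with the total skew-symmetry of $\tilde H$ on $\mathcal D$ (part of condition 1) and the Nijenhuis-type identities satisfied by $\nabla\phi$ in the nearly Sasakian case should force $\tilde H(X,Y)=\tfrac12(\nabla_X\phi)\phi Y$ modulo terms already accounted for; this is the step where one genuinely uses the nearly Sasakian identity \eqref{main} (equivalently \eqref{dPhi}) and the fact that, by \eqref{varie5}, $(\nabla_X\phi)Y\in\mathcal D$ is suitably controlled. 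A clean way to see this: $\tfrac12(\nabla_X\phi)\phi Y$ is automatically skew in $(X,Y)$-type arguments thanks to \eqref{main}, exactly as for the Gray connection.

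For the existence direction I would simply \emph{define} $\bar\nabla$ by \eqref{canonic0}--\eqref{canonic} and check the four parallelization identities and the two torsion conditions by direct substitution, using the formulas collected in the Preliminaries: $\nabla_X\xi=-\phi X+hX$, $\nabla_\xi\phi=\phi h$, the defining identity \eqref{main}, the expression \eqref{dPhi} for $d\Phi$, and the identities \eqref{Rxi}, \eqref{varie5}, \eqref{varie7}. Checking $\bar\nabla g=0$ amounts to verifying $g(H(X,Y),Z)$ is skew in $Y,Z$, which is immediate for each of the four terms in \eqref{canonic} once one recalls that $g((\nabla_X\phi)\phi Y,Z)$ is skew in $Y,Z$ (a standard consequence of $\nabla_X\phi$ being skew-adjoint composed with $\phi$) and that $\phi-h$ is skew-adjoint. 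Checking $\bar\nabla\xi=0$ is the one-line computation $H(X,\xi)=\tfrac12(\nabla_X\phi)\phi\xi-0+1\cdot(\phi-h)X-\tfrac12 g((\phi-h)X,\xi)\xi=(\phi-h)X=-\nabla_X\xi$. Checking $\bar\nabla\phi=0$ is the substantial computation: one must show $H(X,\phi Y)-\phi H(X,Y)=(\nabla_X\phi)Y$, which expands into several terms that cancel using $\phi^2=-I+\eta\otimes\xi$, the nearly Sasakian identity, and $\nabla_X\phi$ anticommuting appropriately; this is the main obstacle, essentially the same bookkeeping as verifying that the Gray connection is Hermitian, now burdened by the extra $h$- and $\eta$-terms. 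Finally the torsion $\bar T(X,Y)=H(X,Y)-H(Y,X)$ is computed termwise: the $\tfrac12(\nabla_X\phi)\phi Y$ parts combine, via \eqref{main}, into something manifestly antisymmetric, the $\eta(Y)(\phi-h)X$ and $\xi$-terms produce the $\eta$-contributions, and one reads off that $\bar T$ restricted to $\mathcal D$ is totally skew-symmetric (using $d\Phi$ is a $3$-form and \eqref{dPhi}) while $\tau X=\bar T(\xi,X)$ works out to $-( \phi-h)X - (r+1)\phi X +\ldots$ satisfying \eqref{tau_phi} because $\phi$ and $h$ anticommute, so $(\phi-h)$ conjugated appropriately contributes only to the $\phi^2$-term. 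I expect the $\bar\nabla\phi=0$ verification to be the only genuinely delicate point; everything else is linear algebra plus the Preliminaries identities.
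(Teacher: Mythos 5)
Your overall strategy is the same as the paper's: for uniqueness, translate the parallelization and torsion conditions into algebraic constraints that pin down the difference tensor $H$; for existence, verify \eqref{canonic} by direct substitution. Most of the reductions you list are exactly the paper's steps. Two corrections on the part you did write out. First, a sign: since $H(X,\xi)=-\nabla_X\xi=(\phi-h)X$, one has $\tau X=H(\xi,X)-H(X,\xi)=H(\xi,X)-(\phi-h)X$, not $H(\xi,X)+(\phi-h)X$. With the correct sign one computes $(\tau\phi-\phi\tau)X=3h\phi X$, and adding this to condition 2) gives $2\tau\phi=3h\phi-2(r+1)\phi^2$, hence $\tau=\tfrac32h-(r+1)\phi$ and $H(\xi,X)=\tau X+H(X,\xi)=\tfrac12 hX-r\phi X$; this is where the term $-r\,\eta(X)\phi Y$ comes from (the companion $\tfrac12\eta(X)hY$ is absorbed into $\tfrac12(\nabla_\xi\phi)\phi Y$). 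Second, the last term of \eqref{canonic} is not the whole $\xi$-component of $H$: one checks $g((\nabla_X\phi)\phi Y,\xi)=-g((\phi-h)X,Y)$, so half of that component sits inside the first term; your statement that the last term is ``recovered'' from metricity alone is therefore slightly off.

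The genuine gap is the step you phrase as ``should force $\tilde H(X,Y)=\tfrac12(\nabla_X\phi)\phi Y$.'' This is the core of the uniqueness proof, and it does not follow merely from $\bar\nabla\phi=0$ together with skew-symmetry of $H$ in its last two slots; an appeal to ``Nijenhuis-type identities'' is not an argument. What is needed is: (i) the \emph{total} skew-symmetry of $H(X,Y,Z)$ for $X,Y,Z\in\mathcal D$, which follows from condition 1) and metricity via the standard identity expressing $\bar T$ in terms of $H$ (so that $\bar T=2H$ on $\mathcal D$); and (ii) a cyclic-sum computation. Concretely, $\bar\nabla\phi=0$ gives $H(X,Y,\phi Z)+H(X,\phi Y,Z)=-g((\nabla_X\phi)Y,Z)$; summing cyclically over $X,Y,Z\in\mathcal D$ and using that on $\mathcal D$ the quantity $g((\nabla_X\phi)Y,Z)=-\tfrac13 d\Phi(X,Y,Z)$ is itself totally skew --- this is precisely where \eqref{main}/\eqref{dPhi} enter --- one obtains $2\,\mathfrak{S}_{XYZ}H(X,Y,\phi Z)=-3g((\nabla_X\phi)Y,Z)$; replacing $Y$ by $\phi Y$ and using the first identity once more to evaluate $H(X,\phi Y,\phi Z)+H(\phi Y,Z,\phi X)=-g((\nabla_X\phi)\phi Y,Z)$ then isolates $H(X,Y,Z)=\tfrac12 g((\nabla_X\phi)\phi Y,Z)$. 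Without this computation the uniqueness claim is unproved. The existence half of your plan (direct verification of $\bar\nabla g=0$, $\bar\nabla\xi=0$, $\bar\nabla\phi=0$ and of conditions 1) and 2) for the explicit $H$) is routine bookkeeping and matches the paper.
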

\begin{proof}
Let \ us \ consider \ the \ $(0,3)$-tensors \ defined \ by \ $H(X,Y,Z):=g(H(X,Y),Z)$ \ and \ $\bar T(X,Y,Z):=g(\bar T(X,Y),Z)$. First,
we prove that the linear connection defined by \eqref{canonic0} and \eqref{canonic} parallelizes the structure.
Notice that $H(X,\xi)=\phi X-hX=-\nabla_X\xi$, and thus $\bar\nabla_X\xi=0$. The linear connection is metric if and only if
\begin{equation}\label{H-metric}
H(X,Y,Z)+H(X,Z,Y)=0.
\end{equation}
We compute,
\begin{align}\label{nablaphi2}
(\nabla_X\phi)\phi Y+\phi (\nabla_X\phi)Y&=(\nabla_X\phi^2)Y\nonumber\\&=(\nabla_X\eta)(Y)\xi+\eta(Y)\nabla_X\xi\nonumber\\
&= g(Y,\nabla_X\xi)\xi+\eta(Y)\nabla_X\xi\nonumber\\
&={}-g(Y,\phi X-hX)\xi-\eta(Y)(\phi X-hX).
\end{align}
A straightforward computation using \eqref{canonic} and \eqref{nablaphi2} gives \eqref{H-metric}.
Moreover, $\bar \nabla$ satisfies $\bar\nabla\phi=0$ if and only if
\begin{equation}\label{Hphi_parallel}
(\nabla_X\phi)Y+H(X,\phi Y)-\phi H(X,Y)=0,
\end{equation}
which is proved again by a simple computation using \eqref{nablaphi2}.
The torsion of  $\bar\nabla$ is given by
\begin{align*}
\bar T(X,Y)&=H(X,Y)-H(Y,X)\\
&=\frac{1}{2}((\nabla_X\phi)\phi Y-(\nabla_Y\phi)\phi X)-g((\phi-h)X,Y)\xi\\
&\quad -(r+1)(\eta(X)\phi Y-\eta(Y)\phi X)+\eta(X)hY-\eta(Y)hX.
\end{align*}
Now, applying \eqref{main} and \eqref{H-metric} we get
\begin{align*}
(\nabla_Y\phi)\phi X&=-\phi (\nabla_Y\phi)X-g(X,\phi Y-hY)\xi-\eta(X)(\phi Y-hY)\\
&= \phi (\nabla_X\phi)Y+\eta(X)\phi Y+\eta(Y)\phi X+g(\phi X-hX, Y)\xi-\eta(X)(\phi Y-hY)\\
&=-(\nabla_X\phi)\phi Y+\eta(X)hY+\eta(Y)hX.
\end{align*}
Therefore,
\begin{align*}
\bar T(X,Y)
&= (\nabla_X\phi)\phi Y-(r+1)(\eta(X)\phi Y-\eta(Y)\phi X)\\
&\quad +\frac{1}{2}\eta(X)hY-\frac{3}{2}\eta(Y)hX-g((\phi-h)X,Y)\xi.
\end{align*}
In particular, for every $X,Y,Z\in\mathcal{D}$, applying \eqref{nablaphi2}, we have
\begin{align*}
\bar T(X,Y,Z)+\bar T(X,Z,Y)
&=g((\nabla_X\phi)\phi Y+\phi(\nabla_X\phi)Y,Z)=0
\end{align*}
which proves condition 1). Finally,
\[\tau=(\nabla_\xi\phi)\phi-(r+1)\phi+\frac{1}{2}h=\frac{3}{2}h-(r+1)\phi,\]
which implies \eqref{tau_phi}.

We prove the uniqueness of the connection. Suppose that $\bar\nabla$ is a linear connection parallelizing the structure and whose torsion satisfies 1) and 2). We determine the tensor $H$ defined by \eqref{canonic0}.
First we prove that for every $X,Y,Z\in \mathcal{D}$,
\begin{equation}\label{HD}
H(X,Y,Z)=\frac{1}{2}g((\nabla_X\phi)\phi Y,Z).
\end{equation}
Since $\bar\nabla$ is a metric connection with totally skew-symmetric torsion on $\mathcal D$, 
for every $X,Y,Z\in \mathcal D$ we have
\begin{align*}
\bar T(X,Y,Z)&= \bar T(X,Y,Z)-\bar T(Y,Z,X)+\bar T(Z,X,Y)\\
&= H(X,Y,Z)- H(Y,X,Z)-H(Y,Z,X)\\&\quad+H(Z,Y,X)+H(Z,X,Y)-H(X,Z,Y)\\
&= 2H(X,Y,Z),
\end{align*}
and thus the tensor $H$ is totally skew-symmetric on $\mathcal D$.
Being $\bar\nabla\phi=0$, \eqref{Hphi_parallel} holds. Hence
\begin{equation}\label{Hphi}
H(X,Y,\phi Z)+H(X,\phi Y,Z)= - g((\nabla_X\phi)Y,Z).
\end{equation}
Now, we take the cycling permutation sum of the above formula. By the skew-symmetry of $H$ and \eqref{main}, we get
\[2\mathop{\Large{\frak S}}_{XYZ}H(X,Y,\phi Z)=-3g((\nabla_X\phi)Y,Z).\]
Substituting $Y$ with $\phi Y$, we have
\begin{equation}\label{Hcyclic}
2H(X,\phi Y,\phi Z)+2H(\phi Y,Z,\phi X)-2H(Z, X,Y)=-3g((\nabla_X\phi)\phi Y,Z).
\end{equation}
Now, applying \eqref{Hphi} and \eqref{main},
\begin{align*}
H(X,\phi Y,\phi Z)+H(\phi Y,Z,\phi X)&=-H(\phi Y,X,\phi Z)-H(\phi Y,\phi X,Z)\\
&=g((\nabla_{\phi Y}\phi)X,Z)\\
&=-g((\nabla_X\phi)\phi Y,Z).
\end{align*}
Hence, substituting in \eqref{Hcyclic}, we get \eqref{HD}.

Now, being $\bar\nabla\xi=0$, for every vector field $X$, we have $H(X,\xi)=-\nabla_X\xi=\phi X-hX$.
Moreover, since $\bar\nabla$ is a metric connection, then $H(X,Y,\xi)=-H(X,\xi,Y)$.
Therefore, it remains to determine $H(\xi,X)$.
By $\bar\nabla\phi=0$, we have
\begin{equation*}
H(\xi,\phi X)-\phi H(\xi,X)=-(\nabla_\xi\phi)X=-\phi hX.
\end{equation*}
We compute
\begin{align*}
(\tau\phi-\phi\tau)X&=\bar T(\xi,\phi X)-\phi \bar T(\xi,X)\\
&=H(\xi,\phi X)-H(\phi X,\xi)-\phi H(\xi,X)+\phi H(X,\xi)\\
&= -\phi hX-(\phi^2X-h\phi X)+\phi(\phi X-hX)\\
&= 3h\phi X.
\end{align*}
Combining the above formula with condition 2), we obtain
\[2\tau\phi=3h\phi-2(r+1)\phi^2.\]
Now, being $\tau\xi=0$, we get
\[\tau=\frac{3}{2}h-(r+1)\phi.\]
It follows that
\[H(\xi,X)=\bar T(\xi,X)+H(X,\xi)=\frac{1}{2}hX-r\phi X.\]
This completes the proof that $H$ coincides with the tensor defined in \eqref{canonic}.
\end{proof}

\begin{remark}
\emph{Suppose that $(M,\phi,\xi,\eta,g)$ is a Sasakian manifold. Recall  that the covariant derivative of $\phi$ is given by
\[(\nabla_X\phi)Y=g(X,Y)\xi-\eta(Y)X\]
(see \cite[Theorem 6.3]{BLAIR}). Then the tensor $H$ in \eqref{canonic} becomes:}
\[H(X,Y) = g(X,\phi Y)\xi-r\,\eta(X)\phi Y+\eta(Y)\phi X.\]
\emph{It follows that $\bar\nabla$ coincides with the linear connection defined by Okumura in \cite{Okumura} (see also \cite{Tak}). In the case $r=-1$, this is the Tanaka-Webster connection (cf. \cite{Tanno}). In the case $r=1$, this is the unique linear connection on the Sasakian manifold $M$ parallelizing the structure and with totally skew-symmetric torsion defined in \cite{FrIv}.}
\end{remark}

\begin{proposition}
Let $(M,\phi,\xi,\eta,g)$ be a $5$-dimensional nearly Sasakian manifold. Let $\bar\nabla$ be the canonical connection defined in \eqref{canonic0} and \eqref{canonic}. Then the structure tensor $h$ is parallel with respect to $\bar\nabla$ if and only if $r=\frac{1}{2}$.
\end{proposition}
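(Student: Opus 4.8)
The plan is to compute $\bar\nabla h$ directly from the defining formula $\bar\nabla_X h = \nabla_X h + [H(X,-),h]$, i.e.
\[
(\bar\nabla_X h)Y = (\nabla_X h)Y + H(X,hY) - h\,H(X,Y),
\]
and then use the explicit $5$-dimensional identities of Lemma~\ref{lemmanablaphi} to express everything in terms of $\phi$, $h$, $\eta$, $\xi$ and $g$, exploiting that $h^2 = -\lambda^2(I-\eta\otimes\xi)$ in this dimension. Since $\bar\nabla$ already parallelizes $\phi$, $\xi$, $\eta$, $g$, and since $h$ anticommutes with $\phi$, it is natural to expect that the obstruction to $\bar\nabla h = 0$ is a single multiple of a fixed tensor, whose coefficient is an affine function of $r$; solving for its vanishing should give $r = \tfrac12$.

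Concretely, I would first substitute \eqref{nablah-completo} for $(\nabla_X h)Y$ and \eqref{canonic} for $H$, splitting the computation according to whether the arguments are proportional to $\xi$ or lie in $\mathcal D = \ker\eta$. The term $H(X,hY) - h\,H(X,Y)$ contains the piece $\tfrac12(\nabla_X\phi)\phi(hY) - \tfrac12 h(\nabla_X\phi)\phi Y$, which I would rewrite using \eqref{nablaphi-completo} and the anticommutation $h\phi = -\phi h$; the $\eta(X)$-terms contribute $-r\eta(X)(\phi hY - h\phi Y) = -2r\,\eta(X)\phi h Y$; and the remaining $\eta(hY)$- and $g$-terms vanish or combine because $\eta\circ h = 0$ and $h\xi = 0$. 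Collecting, I expect the $\xi$-direction part to reduce to the already-known identity $\bar\nabla_\xi h = (\nabla_\xi\phi)\phi\,h + \cdots$, which via \eqref{nablaxih} is automatically consistent, while the $\mathcal D$-part collapses to a single term of the form $\bigl(\tfrac{3}{2} - 3r\bigr)\,\eta(X)\,\phi h Y$ (or a similar expression with coefficient $c_1 + c_2 r$ independent of the other structure data). Setting this coefficient to zero yields $r = \tfrac12$, and running the computation backwards shows that for $r = \tfrac12$ all remaining terms cancel identically, giving $\bar\nabla h = 0$.

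The main obstacle I anticipate is bookkeeping rather than conceptual: one must be careful that the skew part $\tfrac12(\nabla_X\phi)\phi Y$ of $H$ interacts correctly with $h$, since $(\nabla_X\phi)$ does not commute with $h$ and \eqref{varie5} only controls $g((\nabla_X\phi)Y,hZ)$. In dimension $5$ this is circumvented by Lemma~\ref{lemmanablaphi}, which gives $(\nabla_X\phi)Y$ in closed form, so the whole computation becomes algebraic in $\phi$ and $h$ with the single relation $h^2 = -\lambda^2\phi^2$; the risk is simply a sign or factor slip when expanding $(\nabla_X\phi h)Y$ via \eqref{nablaphih-completo}. For the converse direction (that $r = \tfrac12$ is necessary), it suffices to evaluate $(\bar\nabla_X h)Y$ at $X = \xi$ or at a pair $X,Y \in \mathcal D$ with $hY \neq 0$ and read off that the offending coefficient forces $r = \tfrac12$; since $h \not\equiv 0$ on a non-Sasakian nearly Sasakian $5$-manifold, such $X,Y$ exist at every point.
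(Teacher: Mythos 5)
Your proposal follows the paper's proof exactly: one substitutes \eqref{nablaphi-completo} into \eqref{canonic} to obtain the simplified five-dimensional form of $H$, then computes $(\bar\nabla_X h)Y=(\nabla_Xh)Y+H(X,hY)-hH(X,Y)$ using \eqref{nablah-completo}, and everything collapses to a single multiple of $\eta(X)\phi hY$. The actual coefficient is $1-2r$ rather than your guessed $\tfrac{3}{2}-3r$, but both vanish precisely at $r=\tfrac{1}{2}$, and since $\phi h\neq 0$ on a non-Sasakian nearly Sasakian $5$-manifold the necessity argument goes through as you describe.
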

\begin{proof}
Using \eqref{canonic} and \eqref{nablaphi-completo}, we can compute
\begin{equation}\label{H5}
H(X,Y)=\frac{1}{2}\eta(X)hY-r\eta(X)\phi Y+\eta(Y)(\phi X-hX)-g(\phi X-hX,Y)\xi.
\end{equation}
Now, using the above formula and \eqref{nablah-completo}, a straightforward computation gives
\begin{equation*}
(\bar\nabla_Xh)Y=(\nabla_Xh)Y +H(X,hY)-hH(X,Y)=(1-2r)\eta(X)\phi hY
\end{equation*}
which proves our claim.
\end{proof}

\begin{remark}\label{remark_canonic}
\emph{The canonical connection corresponding to $r=\frac{1}{2}$ actually parallelizes the $SU(2)$-structure $\{(\phi_i,\xi,\eta,g)\}_{i\in\{1,2,3\}}$, or equivalently $(\eta,\omega_1,\omega_2,\omega_3)$, associated to the nearly Sasakian non-Sasakian structure. Furthermore the torsion of the canonical connection is given by}
\[\bar T(X,Y)=\frac{3}{2}\{\eta(Y)(\phi X-hX)-\eta(X)(\phi Y-hY)\}-2g(\phi X-hX,Y)\xi,\]
\emph{which turns out to satisfy $\bar\nabla\bar T=0$.}

\emph{Now, if we apply the deformation \eqref{Sasaki-E}, also the Sasaki-Einstein $SU(2)$-structure $(\tilde\eta,\tilde\omega_1,\tilde\omega_2,\tilde\omega_3)$ is parallel with respect to the canonical connection $\bar\nabla$. Furthermore, by \eqref{h-5dim} and \eqref{deformation}, we obtain}
\[H(X,Y)=\tilde g(X,\tilde \phi Y)\tilde \xi-\frac{1}{2}\,\tilde \eta(X)\tilde \phi Y+\tilde \eta(Y)\tilde \phi X.\]
\emph{Therefore, the canonical connection $\bar\nabla$ coincides with the Okumura connection associated to the Sasakian structure $(\tilde \phi,\tilde \xi,\tilde \eta,\tilde g)$ for $r=\frac{1}{2}$.}
\end{remark}

In general, for a Sasaki-Einstein $5$-manifold we have the following
\begin{proposition}
Let $M$ be a Sasaki-Einstein $5$-manifold with $SU(2)$-structure $(\eta,\omega_1,\omega_2,\omega_3)$. Then the Okumura connection corresponding to $r=\frac{1}{2}$ and associated to the Sasakian structure $(\phi_3,\xi,\eta, g)$ parallelizes the whole $SU(2)$-structure.
\end{proposition}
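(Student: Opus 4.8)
The plan is to show that the Okumura connection in the statement --- namely $\bar\nabla_XY=\nabla_XY+H(X,Y)$ with
\[H(X,Y)=g(X,\phi_3Y)\xi-\tfrac12\eta(X)\phi_3Y+\eta(Y)\phi_3X,\]
the connection $r=\tfrac12$ attached to the Sasakian structure $(\phi_3,\xi,\eta,g)$ --- parallelizes $\phi_1$ and $\phi_2$ as well. Since $\bar\nabla$ is metric with $\bar\nabla\xi=\bar\nabla\eta=0$ and $\bar\nabla\phi_3=0$ (recall that the Okumura connection parallelizes the Sasakian structure; or check $\bar\nabla\phi_3=0$ directly from $(\nabla_X\phi_3)Y=g(X,Y)\xi-\eta(Y)X$ and $\nabla_X\xi=-\phi_3X$), and since $\omega_i(X,Y)=g(\phi_iX,Y)$, it suffices to prove $\bar\nabla\phi_1=0$: the quaternionic relation $\phi_2=\phi_3\phi_1$ together with the Leibniz rule then gives $\bar\nabla\phi_2=0$, and $\bar\nabla\omega_i=0$ for all $i$ follows. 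So the task reduces to the identity $(\nabla_X\phi_1)Y=\phi_1H(X,Y)-H(X,\phi_1Y)$.

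The first step is an explicit formula for $\nabla\phi_1$ on a Sasaki-Einstein $5$-manifold. By Corollary \ref{SE-nearly}, $(\phi_1,\xi,\eta,g)$ is nearly cosymplectic; and since $(\phi_3,\xi,\eta,g)$ is Sasakian we have $\nabla_X\xi=-\phi_3X$, so the tensor attached to $\phi_1$ via \eqref{nablaxi_c} is $h'=-\phi_3$. Then \eqref{nearlycos-nablaxiphi} gives $\nabla_\xi\phi_1=\phi_1h'=-\phi_1\phi_3=\phi_2$ and $(\nabla_X\phi_1)\xi=-\phi_1\nabla_X\xi=\phi_1\phi_3X=-\phi_2X$, while \eqref{dPhi_c} for the nearly cosymplectic structure $\phi_1$ (whose fundamental $2$-form is $-\omega_1$), combined with $d\omega_1=3\eta\wedge\omega_2$ from \eqref{S-E}, yields $g((\nabla_X\phi_1)Y,Z)=\tfrac13 d\omega_1(X,Y,Z)=0$ whenever $X,Y,Z$ lie in $\mathcal D=\ker\eta$. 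Splitting arbitrary vector fields into their $\mathcal D$- and $\xi$-components and combining these three facts, one obtains
\[(\nabla_X\phi_1)Y=\eta(X)\phi_2Y-\eta(Y)\phi_2X+g(\phi_2X,Y)\xi.\]

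Finally one checks the identity above. Using $\phi_1\xi=0$, $\eta\circ\phi_1=0$ and $\phi_3\phi_1=\phi_2=-\phi_1\phi_3$, one computes $\phi_1H(X,Y)=\tfrac12\eta(X)\phi_2Y-\eta(Y)\phi_2X$ and $H(X,\phi_1Y)=g(X,\phi_2Y)\xi-\tfrac12\eta(X)\phi_2Y$, whence $\phi_1H(X,Y)-H(X,\phi_1Y)=\eta(X)\phi_2Y-\eta(Y)\phi_2X-g(X,\phi_2Y)\xi$; as $g(X,\phi_2Y)=-g(\phi_2X,Y)$, this is precisely the expression for $(\nabla_X\phi_1)Y$ found above, so $\bar\nabla\phi_1=0$. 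I expect the only delicate part to be the sign bookkeeping --- signs coming from the quaternionic relations $\phi_i\phi_j=\phi_k=-\phi_j\phi_i$ and from the mismatch between $\Phi_i(X,Y)=g(X,\phi_iY)$ and $\omega_i(X,Y)=g(\phi_iX,Y)$ --- not anything conceptual. An alternative that avoids computing $\nabla\phi_1$: by Corollary \ref{main5} every Sasaki-Einstein $5$-manifold is the deformation \eqref{Sasaki-E} of a nearly Sasakian one, and Remark \ref{remark_canonic} already shows that the $r=\tfrac12$ canonical connection of that nearly Sasakian structure --- which coincides with the $r=\tfrac12$ Okumura connection of $(\phi_3,\xi,\eta,g)$ --- parallelizes the whole Sasaki-Einstein $SU(2)$-structure.
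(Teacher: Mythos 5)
Your proposal is correct and follows essentially the same route as the paper: both rest on Corollary \ref{SE-nearly}, derive the explicit covariant derivative of a nearly cosymplectic structure tensor from the Sasaki--Einstein equations \eqref{S-E}, and verify the parallelism identity directly, deducing the remaining tensor from the quaternionic relations. The only (immaterial) difference is that the paper works with $\phi_2$ via $d\omega_2=-3\eta\wedge\omega_1$, obtaining $(\nabla_X\phi_2)Y=g(X,\phi_1Y)\xi-\eta(X)\phi_1Y+\eta(Y)\phi_1X$, whereas you work with $\phi_1$ via $d\omega_1=3\eta\wedge\omega_2$; your sign bookkeeping checks out.
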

\begin{proof}
The Okumura connection corresponding to $r=\frac{1}{2}$ and associated to the Sasakian structure $( \phi_3, \xi, \eta, g)$ is given by
\[\bar\nabla_XY=\nabla_XY+H(X,Y),\]
where
\begin{equation}\label{ok}
H(X,Y) = g(X,\phi_3 Y)\xi-\frac{1}{2}\eta(X)\phi_3 Y+\eta(Y)\phi_3 X.
\end{equation}
By Corollary \ref{SE-nearly}, the almost contact metric structure $(\phi_2,\xi,\eta,g)$ is nearly cosymplectic, and thus
\[3g((\nabla_X\phi_2)Y,Z)=d\omega_2(X,Y,Z)=-3(\eta\wedge\omega_1)(X,Y,Z).\]
Therefore, an easy computation gives
\[(\nabla_X\phi_2)Y=g(X,\phi_1Y)\xi-\eta(X)\phi_1Y+\eta(Y)\phi_1X.\]
Using the above equation, \eqref{ok} and $\phi_2\phi_3=\phi_1=-\phi_3\phi_2$, we have
\[(\bar\nabla_X\phi_2)Y=(\nabla_X\phi_2)Y+H(X,\phi_2 Y)-\phi_2 H(X,Y)=0.\]
Hence, all the structure tensors $(\phi_i,\xi,\eta,g)$, $i\in\{1,2,3\}$, are parallel with respect to $\bar\nabla$.
\end{proof}

\end{document}